\newtheorem{theorem}{Theorem}
\newtheorem{lemma}[theorem]{Lemma} 
\newtheorem{proposition}[theorem]{Proposition}
\newtheorem{corollary}[theorem]{Corollary}
\theoremstyle{definition} 
\newtheorem{definition}{Definition}
\theoremstyle{remark} 
\newtheorem*{remark}{Remark}
\newtheorem*{claim*}{Claim}
\newcommand{\figref}[1]{Figure~\ref{fig:#1}}
\newcommand{\secref}[1]{Section~\ref{sec:#1}}
\newcommand{\defref}[1]{Definition~\ref{def:#1}}
\newcommand{\lemref}[1]{Lemma~\ref{lem:#1}}
\newcommand{\lemsref}[1]{Lemmas~\ref{lem:#1}}
\newcommand{\lemssref}[1]{\ref{lem:#1}}
\newcommand{\propref}[1]{Proposition~\ref{prop:#1}}
\newcommand{\propssref}[1]{\ref{prop:#1}}
\newcommand{\thmref}[1]{Theorem~\ref{thm:#1}}
\newcommand{\thmsref}[1]{Theorems~\ref{thm:#1}}
\newcommand{\thmssref}[1]{\ref{thm:#1}}
\newcommand{\corref}[1]{Corollary~\ref{cor:#1}}
\newcommand{\tabref}[1]{Table~\ref{tab:#1}}
\numberwithin{equation}{section}
\numberwithin{theorem}{section}
\numberwithin{example}{section}
\numberwithin{definition}{section}
\numberwithin{figure}{section}
\DeclareMathOperator{\diag}{diag}
\def\@strippedMR{}
\def\@scanforMR#1#2#3\endscan{%
  \ifx#1M\ifx#2R\def\@strippedMR{#3}%
  \else\def\@strippedMR{#1#2#3}%
  \fi\fi}
\renewcommand\MR[1]{\relax
  \ifhmode\unskip\spacefactor3000 \space\fi
  \@scanforMR#1\endscan
  MR\MRhref{\@strippedMR}{\@strippedMR}}
\title[]{Identifiability of directed Gaussian graphical models with one latent source}
\author[D.~Leung]{Dennis Leung} 
\address{Department of Statistics, University of Washington, Seattle,
  WA, U.S.A.}
\email{dmhleung@uw.edu}
\author[M.~Drton]{Mathias Drton} 
\address{Department of Statistics, University of Washington, Seattle,
  WA, U.S.A.}
\email{md5@uw.edu}
\author[H.~Hara]{Hisayuki Hara} 
\address{Faculty of Economics, University of Niigata, Niigata, Japan}
\email{hara@econ.niigata-u.ac.jp}
\date{Draft from 19 January 2015}
\begin{document}
\begin{abstract}
  We study parameter identifiability of directed Gaussian graphical
  models with one latent variable.  In the scenario we consider, the
  latent variable is a confounder that forms a source node of the
  graph and is a parent to all other nodes, which correspond to the
  observed variables.  We give a graphical condition that is
  sufficient for the Jacobian matrix of the parametrization map to be
  full rank, which entails that the parametrization is generically
  finite-to-one, a fact that is sometimes also referred to as local
  identifiability.  We also derive a graphical condition that is
  necessary for such identifiability.  Finally, we give a condition
  under which generic parameter identifiability can be determined from
  identifiability of a model associated with a subgraph.  The power of
  these criteria is assessed via an exhaustive algebraic computational
  study on models with 4, 5, and 6 observable variables.
\end{abstract}

\keywords{}

\subjclass[2000]{62H05}

\maketitle

\section{Introduction} \label{sec:intro}

In this paper we study parameter identifiability in directed Gaussian
graphical models with a latent variable.  Our work falls in a line of
work where the graphical representation of causally interpretable
latent variable models is used to give tractable criteria to decide
whether parameters can be uniquely recovered from the joint
distribution of the observed variables \citep{PearlCausality}.  Some
examples of prior work in this context are \cite{chen2014},
\cite{drton2011global}, \cite{foygel2012}, \cite{Grzebyk2004},
\cite{kuroki:2004}, \cite{kuroki:2014}, \cite{wermuth2005},
\cite{tian}, and \cite{tian:2009}.

The setup we consider has a single latent variable appear as a source
node in the directed graph defining the Gaussian model.  The resulting
models can be described as follows.  Let $X_1, \dots, X_m$ be
observable variables, and let $L$ be a hidden variable, and suppose
the variables are related by linear equations as
\begin{equation*}
X_v = \sum_{w \not= v} \lambda_{wv} X_w  + \delta_v L + \epsilon_v , \qquad v = 1, \dots, m,
\end{equation*}
where $\lambda_{wv}$, $\delta_v$ are real coefficients quantifying linear
relationships, and the $\epsilon_v$ are independent mean zero Gaussian
noise terms with variances $\omega_v>0$.  The latent variable $L$ is
assumed to be standard normal and independent of the noise terms
$\epsilon_v$.  Letting $X = (X_1, \dots, X_m)^T$,
$\epsilon = (\epsilon_1, \dots, \epsilon_m)^T$ and
$\delta = (\delta_1, \dots, \delta_m)^T$, we may present the model in
the vectorized form
\begin{equation}\label{Matsys}
X  = \Lambda^T X + \delta L + \epsilon,
\end{equation}
where $\Lambda$ is the matrix $(\lambda_{wv})$ with $\lambda_{vv}=0$
for all $v=1,\dots,m$.  We are then interested in specific models, in
which for certain pairs of nodes $w\not=v$ the coefficient
$\lambda_{wv}$ is constrained to zero.  In particular, we are
interested in recursive models, that is, models in which the matrix
$\Lambda$ can be brought into strictly upper triangular form by
permuting the indices of the variables (and thus the rows and columns
of $\Lambda$).  This implies that $I_m - \Lambda$ is invertible,
where $I_m$ is the $m \times m$ identity matrix. It follows that the
observable variate vector $X$ has a $m$-variate normal distribution
$N_m(0, \Sigma)$ with covariance matrix
\begin{equation} \label{sigmacov}
\Sigma = (I_m - \Lambda^T)^{-1} (\Omega + \delta \delta^T)(I_m - \Lambda)^{-1},
\end{equation}
where $\Omega$ is the diagonal matrix with $\Omega_{vv} = \omega_v$.
For additional background on graphical models we refer the reader to
\cite{lau1996} and \cite{PearlCausality}.  We note that the models we
consider also belong to the class of linear structural equation models
\citep{bollen:1989}.

A Gaussian latent variable model postulating recursive zero structure
in the matrix $\Lambda$ from \eqref{Matsys} can be thought of as
associated with a graph $G = (V, E)$ whose vertex set
$V = \{1, \dots, m\}$ is the index set for the observable variables
$X_1, \dots, X_m$.  For two distinct nodes $w,v\in V$, the edge set
$E$ includes the directed edge $(w, v)$, denoted as $w \rightarrow v$
if and only if the model includes $\lambda_{wv}$ as a free parameter.
When the model is recursive, the directed graph $G$ is acyclic and
following common terminology we refer to $G$ as a DAG (for directed
acyclic graph).  In this paper, we will then always assume that the
nodes are labeled in topological order, that is, we have
$V=\{1,\dots,m\}$ and $w \rightarrow v \in E$ only if $w < v$.  

To emphasize the presence of the latent variable $L$, one could
equivalently represent the model by an extended DAG
$\overline{G} = (\overline{V}, \overline{E})$ on $m+1$ nodes
enumerated as $\overline{V} := \{0, 1, \dots, m\}$, where the node $0$
corresponds to the latent variable $L$, and if $G = (V, E)$ is the
graph on $m$ nodes representing the model in the preceding paragraph,
then
$\overline{E} = E \cup \{0 \rightarrow v : v \in \{1, \dots, m\}\}$.
The edges $0 \rightarrow v$ correspond to the coefficients $\delta_v$.

For the DAG $G=(V,E)$, let
\[
\mathbb{R}_E := \{\Lambda = (\lambda_{wv}) \in \mathbb{R}^{m \times
  m} : w \rightarrow v \not \in E \Rightarrow  \lambda_{wv}= 0 \}
\]
be the linear space of coefficient matrices, and let $\diag_m^+$ be the set
of all $m \times m$ diagonal matrices with a positive diagonal.

\begin{definition}\label{def:G*mod}
  The \emph{Gaussian one latent source model} associated with a given DAG
  $G = (V, E)$, denoted as $\mathcal{N}_{*}(G)$, is the family of all
  $m$-variate normal distributions $N_m(0, \Sigma)$ with a covariance
  matrix of the form
\[\Sigma \;=\; (I_m - \Lambda^T)^{-1} (\Omega + \delta \delta^T)(I_m - \Lambda)^{-1}, \]
for $\Lambda \in \mathbb{R}_E$, $\Omega \in \diag_m^+$ and
$\delta \in \mathbb{R}^m$.
\end{definition}

 The model $\mathcal{N}_*(G)$ has the parametrization map
\begin{alignat} {2} \label{phi}
\phi_{G}  &\;:\;& \; 
(\Lambda, \Omega, \delta) &\longmapsto ( I_m - \Lambda^T)^{-1} (\Omega + \delta \delta^T)(I_m - \Lambda)^{-1}
\end{alignat}
defined on the set
$\Theta := \mathbb{R}_E \times \diag_m^+ \times \mathbb{R}^m$, which
we may also view as an open subset of $\mathbb{R}^ {2m + |E|}$, where
$|E|$ is the cardinality of the directed edge set $E$.  Clearly, the
image of $\phi_G$ is in $\text{PD}_m$, the cone of positive definite
$m \times m$ matrices.  Note that since $G$ is acyclic, we have
$(I_m - \Lambda)^{-1} = I_m + \Lambda + \Lambda^2 + \dots + \Lambda^{m
  -1}$ and thus the covariance parametrization $\phi_{G}$ is a polynomial map.

In this paper we will derive graphical conditions on $G$ that are
sufficient/necessary for identifiability of the model
$\mathcal{N}_* (G)$. 
We begin by clarifying what precisely we mean by identifiability.
The most stringent notion, namely that of global identifiability,
requires $\phi_G$ to be injective on all of $\Theta$.  While this
notion is important \citep{drton2011global}, it  is too
stringent for the setting we consider here.  Indeed, for any triple
$(\Lambda, \Omega, \delta) \in \Theta$,
$\phi_{G}( \Lambda, \Omega, \delta) = \phi_{G}(\Lambda, \Omega,
-\delta)$,
which implies that the \emph{fiber}
\[
\left\{(\Lambda', \Omega', \delta') \in \Theta : \phi_{G} (\Lambda, \Omega, \delta)  = \phi_{G} \left(\Lambda', \Omega', \delta'\right) \right\}
\] 
always has cardinality $\geq 2$. We may account for this symmetry by
requiring $\phi_G$ to be 2-to-1 on all of $\Theta$ but this is not
enough as there are always some fibers that are infinite.  For
instance, it is easy to show that the fiber in the above display is
infinite when $\delta=0$.  As such, it is natural to consider notions
of generic identifiability.  Specifically, our contributions will
pertain to the notion of generic finite identifiability, as defined
below, that only requires finite identification of parameters away
from a fixed null set in $\Theta$; here a null set is a set of
Lebesgue measure zero.  This notion is also referred to as local
identifiability in other related work such as \cite{FactBerkeley}.

Null sets appearing in our work are algebraic sets, where an
algebraic set $A \subset \mathbb{R}^n$ is the set of
common zeros of a collection of multivariate polynomials, i.e.,
\[A = \{a \in \mathbb{R}^n : f_i (a) = 0,\; i = 1, \dots, k\},\]
for $f_i \in \mathbb{R}[x_1, \dots, x_n]$, where
$\mathbb{R}[x_1, \dots, x_n]$ is the ring of polynomials in $n$
variables with coefficients in $\mathbb{R}$.  Note that $A$ is a
closed set in the usual Euclidean topology.  If all polynomials $f_i$
are the zero polynomial then $A=\mathbb{R}^n$.  Otherwise, $A$ is a
proper subset, $A\subsetneq \mathbb{R}^n$, and its dimension is then
less than $n$.  In particular, a proper algebraic subset of
$\mathbb{R}^n$ has measure zero.

\begin{definition}\label{def:gfinite}
  Let $S$ be an open subset of $\mathbb{R}^n$, and let $f$ be a map
  defined on $S$. Then $f$ is said to be \emph{generically
    finite-to-one} if there exists a proper algebraic set
  $\tilde{S} \subset \mathbb{R}^n$ such that the \emph{fiber} of $s$,
  i.e.~the set $\{ s' \in S: f(s') = f(s) \}$, is finite for all
  $s \in S \setminus \tilde{S}$. Otherwise, $f$ is said to be
  generically infinite-to-one.
\end{definition}

\begin{definition} \label{def:gfiden} The model $\mathcal{N}_*(G)$ of
  a given DAG $G = (V, E)$ is said to be \emph{generically finitely
    identifiable} if its parametrization $\phi_G$ defined on
  $\Theta$ is generically finite-to-one. We also say the DAG $G$
  is generically finitely identifiable for short.
\end{definition}

Hereafter for any map $f$ defined on an open domain $S \subset \mathbb{R}^n$, we will use 
\begin{equation}
  \label{eq:fiber:notation}
\mathcal{F}_f(s) := \{s' \in S: f(s') = f(s)\}
\end{equation}
to denote the fiber of a point $s \in \mathcal{S}$. If $T$ is a
subset of $S$, we will use $f|_T$ to denote the restriction of $f$ to
$T$, in which case for any $t \in T$, we have the fiber 
$$
\mathcal{F}_{f|_T}(t) = \{t' \in T: f(t') = f(t)\}.
$$
The term ``generic point" will refer to any point in the domain $S$
that lies outside a fixed proper algebraic subset
$\tilde{S}$, and a property is said to hold generically if
it holds everywhere on $S\setminus\tilde{S}$.  The
following well-known lemma is a main tool in this paper, and its proof
will be included in Appendix~\ref{sec:proofs} for completeness. It gives as an
immediate corollary a trivial necessary condition for generic finite
identifiability.

\begin{lemma}\label{lem:localequalfinite}

Suppose $f: S \rightarrow \mathbb{R}^d$ is a polynomial map defined on
an open set $S \subset \mathbb{R}^n$. The following statements are equivalent:
\begin{enumerate}\item $f$ is generically finite-to-one.
\item There exists a proper algebraic subset
  $\tilde{S} \subset \mathbb{R}^n$ such that the fibers of the
  restricted map $f|_{S \setminus \tilde{S}}$ are all finite,
  i.e.~$|\mathcal{F}_{ f|_{S \setminus \tilde{S}}}(s)| < \infty$ for
  all $s \in S \setminus \tilde{S}$. 

\item The Jacobian matrix of $f$  is generically of full column rank.
\end{enumerate}
\end{lemma}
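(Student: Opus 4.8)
The plan is to funnel all three conditions through a single integer invariant: the maximal rank $r$ of the Jacobian $Df$ over $S$. Since $f$ is polynomial, each entry of $Df$ is a polynomial, so $\{s \in S : \operatorname{rank} Df(s) < r\}$ is the common zero locus of all $r \times r$ minors; at a point of maximal rank some such minor is nonzero, so this locus is a \emph{proper} algebraic subset of $\mathbb{R}^n$. Thus $Df$ has rank exactly $r$ off a null set, and statement (iii) is precisely the assertion that $r = n$. The geometric content of $r$ is that it equals the dimension of the Zariski closure $Y$ of the image; I would record this via generic smoothness in characteristic zero, noting that the value of $r$ does not depend on the ground field, because an $r \times r$ minor vanishes identically over $\mathbb{R}$ iff it does over $\mathbb{C}$. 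With these reductions in hand, the implication (i) $\Rightarrow$ (ii) is immediate, since $\mathcal{F}_{f|_{S \setminus \tilde{S}}}(s) \subseteq \mathcal{F}_f(s)$ for every $s$; the real work is (iii) $\Rightarrow$ (i) and (ii) $\Rightarrow$ (iii), the latter in the contrapositive form $\neg$(iii) $\Rightarrow \neg$(ii).

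For (iii) $\Rightarrow$ (i) I would complexify, extending $f$ to the polynomial map $f_{\mathbb{C}} : \mathbb{C}^n \to \mathbb{C}^d$ defined by the same polynomials, and set $Y = \overline{f_{\mathbb{C}}(\mathbb{C}^n)}$, an irreducible variety of dimension $r = n$. The theorem on the dimension of fibers of a dominant morphism supplies a nonempty Zariski-open $U \subseteq Y$ over which every fiber has dimension $\dim \mathbb{C}^n - \dim Y = 0$ and hence is finite. The preimage $f_{\mathbb{C}}^{-1}(Y \setminus U)$ is a proper Zariski-closed subset of $\mathbb{C}^n$, so—because $\mathbb{R}^n$ is Zariski-dense in $\mathbb{C}^n$—its trace $\tilde{S} := \{s \in \mathbb{R}^n : f(s) \in Y \setminus U\}$ is a proper algebraic subset of $\mathbb{R}^n$. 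For every $s \in S \setminus \tilde{S}$ the complex fiber of $f_{\mathbb{C}}$ over $f(s)$ is finite, and the real fiber $\mathcal{F}_f(s)$ is contained in it, hence finite; this is exactly (i).

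For $\neg$(iii) $\Rightarrow \neg$(ii) I would argue entirely over $\mathbb{R}$, since finiteness—unlike dimension—does not transfer cleanly across $\mathbb{R} \subset \mathbb{C}$. Assume $r < n$. On the open dense set $S_{\mathrm{reg}} := \{s \in S : \operatorname{rank} Df(s) = r\}$ the map has locally constant rank, so the real constant-rank theorem writes $f$ locally as a coordinate projection; through each $s_0 \in S_{\mathrm{reg}}$ the fiber is therefore, near $s_0$, a smooth submanifold of dimension $n - r \ge 1$, and in particular infinite. Given any proper algebraic $\tilde{S}$, the set $S_{\mathrm{reg}} \setminus \tilde{S}$ remains open and dense, so I may choose $s_0$ in it with a neighborhood $U \subseteq S_{\mathrm{reg}} \setminus \tilde{S}$; the positive-dimensional local fiber through $s_0$ meets $U$ in an infinite set lying entirely in $S \setminus \tilde{S}$, so $\mathcal{F}_{f|_{S \setminus \tilde{S}}}(s_0)$ is infinite. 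Since no choice of $\tilde{S}$ can make all these fibers finite, (ii) fails.

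I expect the main obstacle to be precisely this asymmetry between the real and complex pictures. The fiber-dimension theorem is cleanest over an algebraically closed field, yet the target conclusions in (i) and (ii) concern real fibers, whose cardinality is not governed by complex fiber dimension alone. The two directions must therefore be handled by different mechanisms: finiteness transfers \emph{downward} from $\mathbb{C}$ to $\mathbb{R}$ for (iii) $\Rightarrow$ (i), whereas infinitude must be manufactured \emph{directly} over $\mathbb{R}$ through the constant-rank theorem for $\neg$(iii) $\Rightarrow \neg$(ii). A secondary but essential point is the bookkeeping that keeps every exceptional set a proper algebraic subset of $\mathbb{R}^n$—in particular verifying that the complex exceptional locus does not swallow all of $\mathbb{R}^n$—which is what guarantees the deletions are harmless.
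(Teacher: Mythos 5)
Your proof is correct, and while your treatment of (ii) $\Rightarrow$ (iii) coincides with the paper's (both invoke the constant-rank theorem to produce positive-dimensional local fibers through generic points, so no choice of exceptional set can rescue finiteness), your argument for the hardest implication, (iii) $\Rightarrow$ (i), takes a genuinely different route. The paper stays entirely over $\mathbb{R}$: it first reduces to the square case $d=n$ by projecting onto $n$ coordinates, takes the critical locus $C=\{\det J_f=0\}$, shows via semialgebraic stratification and the chain rule that every infinite fiber must meet $C$, and then uses Sard's theorem together with the inverse function theorem to show that $f^{-1}\bigl(\overline{f(C)}\bigr)$ is a proper algebraic set containing all points with infinite fibers. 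You instead complexify: identifying the generic Jacobian rank with $\dim \overline{f_{\mathbb{C}}(\mathbb{C}^n)}$ (legitimate, since minors with real coefficients vanish identically over $\mathbb{R}$ iff over $\mathbb{C}$), you apply the fiber-dimension theorem for dominant morphisms to get generic zero-dimensional complex fibers, and transfer finiteness down to real fibers, using Zariski density of $\mathbb{R}^n$ in $\mathbb{C}^n$ to keep the exceptional set proper. Your route buys a shorter and more conceptual argument resting on one standard algebro-geometric theorem, with no reduction to $d=n$ and no need for stratification or Sard; the paper's route buys self-containedness over the reals, using only differential-topological tools (rank theorem, Sard, inverse function theorem) plus one stratification fact, which is arguably more elementary for a statistics readership. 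Your closing observation about the real/complex asymmetry---finiteness descends from $\mathbb{C}$ to $\mathbb{R}$ but infinitude must be produced directly over $\mathbb{R}$---is exactly the right structural point and explains why the two implications need different mechanisms in your scheme.
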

%
%
%

\begin{corollary}\label{cor:basicthm}
Given a DAG $G = (V, E)$, a necessary condition for generic finite
identifiability of its associated model $\mathcal{N}_*(G)$ is that ${m+1 \choose 2} - 2m \geq |E|$.
\end{corollary}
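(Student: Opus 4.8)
The plan is to combine the equivalence of generic finite identifiability with generic full column rank of the Jacobian, furnished by Lemma~\ref{lem:localequalfinite}, with a direct dimension count. Since the text already records that $\phi_G$ is a polynomial map on the open set $\Theta \subset \mathbb{R}^{2m+|E|}$, the implication (i)~$\Leftrightarrow$~(iii) of that lemma applies verbatim: the model $\mathcal{N}_*(G)$ is generically finitely identifiable if and only if the Jacobian of $\phi_G$ is generically of full column rank.

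First I would pin down the two relevant dimensions. The domain $\Theta = \mathbb{R}_E \times \diag_m^+ \times \mathbb{R}^m$ has dimension $|E| + m + m = 2m + |E|$, so the Jacobian matrix of $\phi_G$ has $2m + |E|$ columns. On the codomain side, the image of $\phi_G$ consists of symmetric matrices (indeed it lies in $\text{PD}_m$), and the space of symmetric $m \times m$ matrices has dimension ${m+1 \choose 2}$; hence the Jacobian has ${m+1 \choose 2}$ rows.

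The key step is then the elementary fact that a matrix attains full column rank only when its number of rows is at least its number of columns. Consequently, full column rank of the Jacobian at any point---in particular at a generic point---forces ${m+1 \choose 2} \geq 2m + |E|$, which is exactly the asserted inequality ${m+1 \choose 2} - 2m \geq |E|$. Reading this contrapositively yields the necessary condition: whenever ${m+1 \choose 2} - 2m < |E|$, the Jacobian cannot attain full column rank anywhere, so by Lemma~\ref{lem:localequalfinite} the model fails to be generically finitely identifiable.

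I expect no genuine obstacle here, as the whole argument reduces to a dimension count once the lemma is available. The only place demanding care is the bookkeeping of the codomain dimension: one must use that $\phi_G$ takes values among \emph{symmetric} matrices, so that the ambient dimension is ${m+1 \choose 2}$ rather than $m^2$; the latter would yield a strictly weaker and incorrect bound.
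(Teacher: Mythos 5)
Your proof is correct and is essentially the paper's own argument: both invoke Lemma~\ref{lem:localequalfinite} to reduce generic finite identifiability to generic full column rank of the Jacobian, then note the Jacobian is of size ${m+1 \choose 2} \times (2m + |E|)$ so that full column rank forces ${m+1 \choose 2} \geq 2m + |E|$. Your writeup merely spells out the dimension bookkeeping (in particular the symmetry of the codomain) that the paper leaves implicit.
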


\begin{proof} [Proof of \corref{basicthm}]
The Jacobian matrix of $\phi_G$ is of size ${m +1 \choose 2} \times (|E| + 2m)$, and it is necessary that ${m +1  \choose 2} \geq |E| + 2m$ for it to have full column rank.
\end{proof}

Property $(ii)$ is seemingly weaker than $(i)$ in
\lemref{localequalfinite}. It is useful in proving our
results in \secref{hara}. In light of \corref{basicthm}, for the rest
of this paper we will restrict our attention to DAGs $G = (V, E)$ with
${m + 1 \choose 2} - 2m \geq |E|$, in which case $m$ must be at least
$3$.

One of our contributions is a sufficient graphical condition stated in
\thmref{our_suff} below. For $v \not = w \in V$, we will use
$v \text{ --- } w$ or $w \text{ --- } v$ to denote the edge
$(v, w) = (w, v)$ of an undirected graph on $V$. With slight abuse of
notation, we may also use $v\text{ --- }w$ or $ w\text{ --- }v$ to
denote an edge $v \rightarrow w \in E$ when the directionality of
edges in a DAG $G = (V, E)$ is to be ignored. For any
directed/undirected graph $G = (V, E)$, the complement of $G$, denoted
as $G^c = (V, E^c)$, is the undirected graph on $V$ with the
edge set
$E^c = \{v \text{ --- } w:  (v,w)\not\in E  \text{ and  } (w,v) \not \in E\}$.

\begin{theorem} [Sufficient condition for generic finite identifiability]\label{thm:our_suff} 
 The model $\mathcal{N}_*(G)$ given by a DAG $G = (V, E)$ 
is generically finitely identifiable if every connected component of $G^c$ contains an odd cycle.
\end{theorem}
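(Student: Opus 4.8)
The plan is to use \lemref{localequalfinite}: it suffices to show that the Jacobian of $\phi_G$ has full column rank generically, and for this it is enough to exhibit a \emph{single} point of $\Theta$ at which the differential is injective. Indeed, the maximal minors of the Jacobian are polynomials in $(\Lambda,\Omega,\delta)$, and $\Theta$ is a nonempty Euclidean-open (hence Zariski-dense) subset of $\mathbb{R}^{|E|+2m}$; so if one minor of size $|E|+2m$ is nonzero at some point of $\Theta$, it is a nonzero polynomial whose vanishing locus is a proper algebraic set, off which the Jacobian has full column rank. Thus I only need one good point.

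Next I would compute and simplify the differential. Writing $A:=(I_m-\Lambda)^{-1}$, $B:=I_m-\Lambda$, $M:=\Omega+\delta\delta^T$ and using $dA=A\,\dot\Lambda\,A$, a direct computation shows that conjugating the symmetric image $d\phi_G(\dot\Lambda,\dot\Omega,\dot\delta)$ by the invertible matrix $B$ yields
\[
B^T\,d\phi_G(\dot\Lambda,\dot\Omega,\dot\delta)\,B=\dot\Lambda^T N+N^T\dot\Lambda+\dot\Omega+\delta\dot\delta^T+\dot\delta\delta^T,\qquad N:=A^TM.
\]
Since $B$ is invertible, this vanishes iff $d\phi_G(\dot\Lambda,\dot\Omega,\dot\delta)=0$. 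The diagonal entries of this equation express $\dot\Omega$ as a function of $(\dot\Lambda,\dot\delta)$, so injectivity reduces to showing that the off-diagonal entries force $\dot\Lambda=0$ and $\dot\delta=0$ (whence $\dot\Omega=0$).

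The key step is to evaluate at the convenient point $\Lambda=0$ (so $A=I_m$ and $N=M$), with $\Omega\in\diag_m^+$ arbitrary and $\delta$ having all entries nonzero; this point lies in $\Theta$. Substituting $N_{kj}=\omega_k\,\mathbf{1}\{k=j\}+\delta_k\delta_j$ and introducing the combination $p_i:=\dot\delta_i+\sum_{k\to i}\dot\Lambda_{ki}\,\delta_k$, the off-diagonal equation indexed by a pair $\{i,j\}$ collapses to
\[
\mathbf{1}\{i\to j\in E\}\,\omega_i\,\dot\Lambda_{ij}+\mathbf{1}\{j\to i\in E\}\,\omega_j\,\dot\Lambda_{ji}+\delta_i\,p_j+\delta_j\,p_i=0 .
\]
For a non-edge $\{i,j\}\in E^c$ both indicators vanish, and dividing by $\delta_i\delta_j\neq0$ turns the equation into $x_i+x_j=0$ with $x_i:=p_i/\delta_i$. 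Thus the non-edge equations decouple into the linear sign system $\{x_i+x_j=0:\{i,j\}\in E^c\}$ on the graph $G^c$, completely separating the $\delta$-direction from the edge parameters.

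Finally I would invoke the elementary fact that this sign system has only the trivial solution $x=0$ precisely when every connected component of $G^c$ is non-bipartite, i.e.\ contains an odd cycle: along a spanning tree the constraints propagate a sign $x_v=\pm x_r$, and traversing an odd cycle forces $x_r=-x_r$, so $x\equiv0$ on that component. Under the hypothesis this gives $p_i=0$ for all $i$; the edge equations then read $\omega_i\dot\Lambda_{ij}=0$, so $\dot\Lambda=0$, and substituting back gives $\dot\delta_i=p_i=0$, while the diagonal equations give $\dot\Omega=0$. Hence $d\phi_G$ is injective at this point, which by the first paragraph finishes the proof. The only real obstacle is recognizing the right reduction — conjugating by $B$, specializing to $\Lambda=0$, and forming the combination $p_i$ — after which the odd-cycle hypothesis enters exactly through the bipartiteness criterion for the sign system; the rest is routine.
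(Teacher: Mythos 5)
Your proof is correct, but it reaches the conclusion by a genuinely different route than the paper. The paper never differentiates $\phi_G$ directly: it first passes to the map $\tilde{\varphi}_G:(\Lambda,\Psi,\gamma)\mapsto(I_m-\Lambda)\Psi(I_m-\Lambda^T)-\gamma\gamma^T$ via \lemref{equiv_maps}, then proves a block-structure result (\lemref{Jacob_structure}, via a Leibniz expansion of the determinant) showing that full column rank of the whole Jacobian follows from full column rank of the non-edge/$\gamma$ block alone, and finally identifies that block with $-J_{f_{G^c}}$, where $f_{G^c}:x\mapsto(x_vx_w)_{v\text{ --- }w\in E^c}$ is the monomial map whose generic rank is computed in \lemref{lem_vicard} by invoking results of Vicard--Stanghellini together with the rank theorem. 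You instead compute $d\phi_G$ itself, conjugate by $I_m-\Lambda$ to absorb the matrix inverses, and evaluate at the single well-chosen point $\Lambda=0$; this is legitimate because, for polynomial maps, full column rank at one point implies generically full column rank (the same observation the paper uses inside the proof of \lemref{equiv_maps}). Note that the combinatorial core is identical in both arguments: the kernel of $J_{f_{G^c}}$ at a point $x$ with nonzero entries is exactly your sign system $\{u_v/x_v+u_w/x_w=0 : v\text{ --- }w\in E^c\}$, so both proofs ultimately rest on the bipartiteness criterion for $G^c$, which you prove elementarily rather than by citation. What your route buys is self-containedness (no reduction lemmas, no external references, and the decoupling of $\dot\delta$ from $(\dot\Lambda,\dot\Omega)$ falls out of the substitution $p_i=\dot\delta_i+\sum_{k\to i}\dot\Lambda_{ki}\delta_k$ rather than a separate structural lemma); what the paper's route buys is reusable machinery: \lemref{Jacob_structure} and \lemref{lem_vicard} are used again in the proofs of \thmref{our_nec} and \thmref{our_suff_ex}, and the necessity argument there needs an upper bound on the rank of the non-edge/$\gamma$ block holding everywhere, which a single-point evaluation like yours cannot deliver.
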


\figref{graph675} shows a DAG $G$ that satisfies the sufficient condition in
\thmref{our_suff}; its undirected complement $G^c$ is shown on the
right of the figure.  We will revisit this example in
Section~\ref{sec:example}, where we report on algebaric computations
that show that for this graph $G$ the fibers of $\phi_G$ are
generically of size 2 or 4.

\begin{figure}[t] 
\centering

\ \

\ \ 
 \begin{minipage}[t]{0.45\textwidth}
\centering
 \begin{tikzpicture}[->,>=triangle 45, shorten >=0pt,
        auto,thick,
        main node/.style={circle,inner sep=2pt,fill=gray!20,draw,font=\sffamily}]

        \node[main node,rounded corners] (1) {1};
        \node[main node,rounded corners] (2)
        [below =.5cm and 0cm of 1] {2};
        \node[main node,rounded corners] (3)
        [right=0cm and 1.5 cm of 1] {3};
        \node[main node,rounded corners] (4)
        [below = .5 cm  of 3] {4};
        \node[main node,rounded corners] (5)
        [right =1.5 cm of 4] {5};
 
        \path[color=black!20!blue,->,every
        node/.style={font=\sffamily\small}]
        (1) edge node {} (3)
        (1) edge node {} (2)
        (2)  edge node {} (4)
       (3) edge node {} (4) 
        (4) edge node {} (5);
 \end{tikzpicture}

\end{minipage}
\begin{minipage}[t]{0.45\textwidth}
\centering

 \begin{tikzpicture}[->,>=triangle 45, shorten >=0pt,
             auto,thick,
        main node/.style={circle,inner sep=2pt,fill=gray!20,draw,font=\sffamily}]

        \node[main node,rounded corners] (1) {1};
        \node[main node,rounded corners] (2)
        [below =.5cm and 0cm of 1] {2};
        \node[main node,rounded corners] (4)
        [right=0cm and 1.5 cm of 1] {4};
        \node[main node,rounded corners] (3)
        [below = .5 cm  of 4] {3};
        \node[main node,rounded corners] (5)
        [right =1.5 cm of 4] {5};
 
        \path[color=black!20!red,-,every
        node/.style={font=\sffamily\small}]
        (1) edge node {} (2)
        (1) edge node {} (4)
        (1)  edge [bend left= 20]node {} (5)
       (2) edge node {} (3) 
        (3) edge node {} (5)
      (2) edge node {} (5);
 \end{tikzpicture}
\end{minipage}
\caption{A DAG $G$ that satisfies the sufficient condition in
  \thmref{our_suff}; its undirected complement $G^c$ is shown on the
  right.  
}
 \label{fig:graph675}
\end{figure}
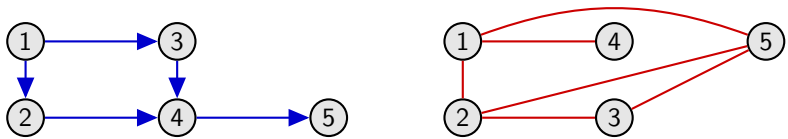

Our approach to proving \thmref{our_suff} also yields a necessary
condition for generic finite identifiability. This condition can be
stated in terms of two undirected graphs on the node set $V$, denoted
$G_{|L, cov}= (V, E_{|L, cov})$ and $G_{con}= (V, E_{con})$, where
$E_{|L, cov}$ captures the dependency of variable pairs after
conditioning on the latent variable $L$, and $E_{con}$ captures the
dependency of variable pairs after conditioning on all other
variables. From \eqref{Matsys} it can be seen that
$\Sigma_{|L} := ( I_m - \Lambda^T)^{-1} \Omega (I_m - \Lambda)^{-1}$
is the covariance matrix of $X$ conditioning on $L$, hence
$v \text{ --- } w\in E_{|L, cov}$ if and only if
$(\Sigma_{|L})_{vw} \not \equiv 0$, and analogously
$v \text{ --- } w\in E_{con}$ if and only if
$(\Sigma_{|L}^{-1})_{vw} \not \equiv 0$. It is well known that these
two undirected graphs can be obtained by using the d-separation
criterion applied to the extended DAG $\overline{G}$; see
\citet[p.~73]{LAS} for example.

\begin{theorem} [Necessary condition for generic finite identifiability]\label{thm:our_nec}
Given a DAG $G = (V, E)$, for the model $\mathcal{N}_*(G)$ to be generically finitely identifiable, it is necessary that the following two conditions both hold:
\begin{enumerate}
\item $|E_{con}| -  |E| \geq d_{con}$, where $d_{con}$ is the number of connected components in the graph $(G_{con})^c$ that do not contain any odd cycle;
\item $|E_{|L, cov}| - |E| \geq d_{cov}$, where $d_{cov}$ is the
  number of connected components in the graph $(G_{|L, cov})^c$ that do
  not contain any odd cycle.
\end{enumerate}
\end{theorem}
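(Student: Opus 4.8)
\emph{Proof plan.} The plan is to prove the contrapositive of each condition using \lemref{localequalfinite}: I will show that whenever one of the two inequalities fails, the fibers of $\phi_G$ are generically positive-dimensional, so that $\phi_G$ is generically infinite-to-one and the model is not generically finitely identifiable. Throughout I first reparametrize by replacing $\delta$ with $\eta := (I_m - \Lambda^T)^{-1}\delta$. Since $\Lambda \mapsto (I_m - \Lambda^T)^{-1}$ is polynomial and invertible, the assignment $(\Lambda, \Omega, \delta) \mapsto (\Lambda, \Omega, \eta)$ is a polynomial bijection with polynomial inverse, and hence preserves finiteness of fibers. In these coordinates the parametrization reads
\[
\Sigma \;=\; \Sigma_{|L} + \eta\eta^T, \qquad \Sigma_{|L} = (I_m - \Lambda^T)^{-1}\Omega(I_m - \Lambda)^{-1},
\]
so that the latent contribution is exactly the rank-one matrix $\eta\eta^T$.

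For condition $(ii)$ I work with the covariance graph $G_{|L, cov}$. By definition $(\Sigma_{|L})_{vw}\equiv 0$ for every edge $v\text{ --- }w\in (G_{|L,cov})^c$, so for such a pair the entry of $\Sigma$ collapses to the pure product $\Sigma_{vw} = \eta_v\eta_w$. Fix a generic parameter point with image $\Sigma_0$ and $\eta$-coordinate $\eta_0$. The first key step is to describe the set $W$ of all $\eta$ realizing the same products $\eta_v\eta_w = (\eta_0)_v(\eta_0)_w$ along every edge of $(G_{|L,cov})^c$. Writing $\eta_v = (\eta_0)_v e^{x_v}$ near the entrywise-nonzero point $\eta_0$ turns these product constraints into the linear system $x_v + x_w = 0$ indexed by the edges of $(G_{|L,cov})^c$. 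A short computation on this signed incidence system shows that its solution space has dimension equal to the number of connected components of $(G_{|L,cov})^c$ that contain no odd cycle: an odd cycle forces $x\equiv 0$ on its component, whereas each odd-cycle-free (bipartite) component contributes exactly one free scaling. Hence $\dim W = d_{cov}$ locally at $\eta_0$.

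The second key step imposes realizability of the remaining entries. Since a recursive Gaussian model without latent variable is globally identifiable, the DAG parametrization $(\Lambda, \Omega) \mapsto \Sigma_{|L}$ is injective, so its image is an irreducible variety of dimension $|E| + m$ inside the linear space of symmetric matrices supported on $G_{|L,cov}$, which has dimension $|E_{|L,cov}| + m$. Thus membership of $\Sigma_0 - \eta\eta^T$ (restricted to this support) in the image is cut out by at most $|E_{|L,cov}| - |E|$ polynomial conditions. Intersecting $W$ with these conditions---nonempty, since $\eta_0$ already lies in it---yields a locus of dimension at least $d_{cov} - (|E_{|L,cov}| - |E|)$, and every point of it produces, by injectivity and continuity (which keeps $\Omega \in \diag_m^+$ and $\Lambda \in \mathbb{R}_E$), a genuine triple in the fiber over $\Sigma_0$. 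When $(ii)$ fails this dimension is strictly positive, so the generic fiber is infinite. Condition $(i)$ follows from the identical argument applied to the concentration matrix: by Sherman--Morrison $\Sigma^{-1} = K_{|L} - \zeta\zeta^T$, where $K_{|L} := \Sigma_{|L}^{-1}$ has support $G_{con}$ and $\zeta := K_{|L}\eta/\sqrt{1 + \eta^T K_{|L}\eta}$. For fixed $(\Lambda,\Omega)$ the map $\eta\mapsto\zeta$ is a generic diffeomorphism (with inverse $\eta = \Sigma_{|L}\zeta/\sqrt{1 - \zeta^T\Sigma_{|L}\zeta}$), and matrix inversion is a diffeomorphism of $\mathrm{PD}_m$, so finiteness of fibers is again preserved. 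Now the edges of $(G_{con})^c$ give $(\Sigma^{-1})_{vw} = -\zeta_v\zeta_w$, the injective DAG map $(\Lambda,\Omega)\mapsto K_{|L}$ has image of codimension $|E_{con}| - |E|$, and the two steps above give a fiber of dimension at least $d_{con} - (|E_{con}| - |E|)$, which is positive precisely when $(i)$ fails.

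I expect the main obstacle to be the dimension bookkeeping in the second step: one must verify that the realizability conditions genuinely cut $W$ down by at most $|E_{|L,cov}| - |E|$ (respectively $|E_{con}| - |E|$) dimensions and that the resulting locus is positive-dimensional at a truly generic point, which requires care with the interaction between the log-linear solution space $W$ (governed by the odd-cycle structure) and the algebraic image of the DAG parametrization. By contrast, the combinatorial input---that the solution dimension of $x_v + x_w = 0$ equals the number of odd-cycle-free components---and the reduction through the two reparametrizations are the clean parts of the argument.
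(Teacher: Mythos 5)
Your overall strategy is genuinely different from the paper's: you argue the contrapositive by exhibiting positive-dimensional fibers, whereas the paper never constructs fibers at all. It shows that when $|E_{con}|-|E|<d_{con}$ the Jacobian $J(\tilde{\varphi}_G)$ is \emph{everywhere} rank-deficient (the rows indexed by non-edges of $G$ that are also non-edges of $G_{con}$ vanish in the $\Psi,\Lambda$ columns, and their $\gamma$-block has rank at most $m-d_{con}$ by \lemref{lem_vicard}), and then invokes \lemref{localequalfinite}; condition $(ii)$ is handled the same way with $J(\tilde{\phi}_G)$. Your first step --- that the constraints $\eta_v\eta_w=(\eta_0)_v(\eta_0)_w$ along the edges of $(G_{|L,cov})^c$ carve out a set $W$ of local dimension $d_{cov}$, governed by the odd-cycle structure --- is correct, and is precisely the combinatorial content of the paper's \lemref{lem_vicard}.

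The gap is in your second step, exactly where you predicted trouble. The claim that the realizability conditions cut $W$ down by at most $|E_{|L,cov}|-|E|$ dimensions fails as stated, for two reasons. First, a variety of codimension $c$ is in general not cut out set-theoretically by $c$ equations, so ``at most $|E_{|L,cov}|-|E|$ polynomial conditions'' is unjustified; over $\mathbb{C}$ this could be repaired by the affine dimension theorem (every nonempty component of the intersection of two varieties $X,Y\subseteq\mathbb{A}^n$ has dimension at least $\dim X+\dim Y-n$). But second, and fatally for the proof as written, that lower bound is a theorem about algebraically closed fields and is false over $\mathbb{R}$: a single real polynomial condition can annihilate all positive dimensions (e.g.\ $x_1^2+x_2^2=0$ cuts $\mathbb{R}^2$ down to a point). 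So over $\mathbb{R}$ your intersection locus through $\eta_0$ may be zero-dimensional even when $d_{cov}>|E_{|L,cov}|-|E|$, and ``the generic fiber is infinite'' does not follow. (A smaller unaddressed point: your locus only certifies membership of $\Sigma_0-\eta\eta^T$ in the Zariski \emph{closure} of the image of $(\Lambda,\Omega)\mapsto\Sigma_{|L}$, not in the image itself; ``injectivity and continuity'' does not dispose of this.) The argument can be salvaged, but only by routing through $\mathbb{C}$: do the dimension count for the complexified map, conclude that complex fibers are generically positive-dimensional, and transfer back via \lemref{localequalfinite}$(iii)$, since full column rank of the Jacobian at a single real point would force generic full rank over $\mathbb{C}$ and hence generically finite complex fibers. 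Alternatively, linearize your construction: differentiating your product constraints and realizability conditions at $\theta_0$ yields exactly the paper's rank count on $J(\tilde{\varphi}_G)$ and $J(\tilde{\phi}_G)$, which sidesteps the real/complex issue entirely because rank deficiency of a matrix of polynomials is insensitive to the field.
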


\figref{graph7539} gives an example of a DAG that fails to satisfy our
necessary condition, specifically, condition $(ii)$.

\begin{figure}[t] 
\centering
\ \

\ \ 
\begin{minipage}[t]{0.3\hsize}
\centering
 \begin{tikzpicture}[baseline=(1),->,>=triangle 45, shorten >=0pt,
        auto,thick,
        main node/.style={circle,inner sep=2pt,fill=gray!20,draw,font=\sffamily}]
 \node[main node,rounded corners] (1) {1};
 \node[main node,rounded corners] [below = .45cm of 1] (2) {2};
\node[main node,rounded corners] [below right= .5cm and .25cm of 2] (4) {4};
 \node[main node,rounded corners] [below left= .3 cm and .9cm of 2] (3) {3};
 \node[main node,rounded corners] [below right= .3 cm and .9cm of 2] (6) {6};
 \node[main node,rounded corners] [below left= .03cm and .5cm  of 4] (5) {5};
%
%
        \path[color=black!20!blue,->,every
        node/.style={font=\sffamily\small}]
   (1) edge node {} (2)
   (1) edge node {} (3)
(1) edge node {} (6)
   (2) edge node {} (4)
   (2) edge node {} (3)
(2) edge node {} (6)
 (3) edge node {} (5)
 (4) edge node {} (5)
 (3) edge [bend left= 998] node {} (6);
 \end{tikzpicture}

\end{minipage}
 \begin{minipage}[t]{0.3\hsize}
\centering
 \begin{tikzpicture}[baseline=(1),->,>=triangle 45, shorten >=0pt,
        auto,thick,
        main node/.style={circle,inner sep=2pt,fill=gray!20,draw,font=\sffamily}]
 	\node[main node,rounded corners] (1) {1};
	 \node[main node,rounded corners] [below = .45cm of 1] (2) {2};
	\node[main node,rounded corners] [below right= .5cm and .25cm of 2] (4) {4};
 	\node[main node,rounded corners] [below left= .3 cm and .9cm of 2] (3) {3};
 	\node[main node,rounded corners] [below right= .3 cm and .9cm of 2] (6) {6};
 	\node[main node,rounded corners] [below left= .03cm and .5cm  of 4] (5) {5};
        \path[color=black!20!red,-,every
        node/.style={font=\sffamily\small}]
   (1) edge node {} (2)
   (1) edge node {} (3)
(1) edge node {} (6)
   (2) edge node {} (4)
   (2) edge node {} (3)
(2) edge node {} (6)
 (3) edge node {} (5)
 (4) edge node {} (5)
(4) edge node {} (3)
 (3) edge [bend left= 998] node {} (6);
 \end{tikzpicture}
\end{minipage}
  \begin{minipage}[t]{0.3\hsize}
\centering
 \begin{tikzpicture}[baseline=(1), ->,>=triangle 45, shorten >=0pt,
        auto,thick,
        main node/.style={circle,inner sep=2pt,fill=gray!20,draw,font=\sffamily}]
        \node[main node,rounded corners] (1) {1};
		\node[main node,rounded corners] [below = 1.8cm of 1] (3) {3};
        \node[main node,rounded corners] [below right= .5cm and .5cm of 1](4) {4};
\node[main node,rounded corners] [below left= .5cm and .5cm of 1](5) {5};
\node[main node,rounded corners] [below = .5cm of 4](6) {6};
\node[main node,rounded corners] [below = .5cm of 5](2) {2};

        \path[color=black!20!red,-,every
        node/.style={font=\sffamily\small}]
(1) edge node {} (4)
(1) edge node {} (5)
(4) edge node {} (6)
(2) edge node {} (5)
(5) edge node{} (6)
;

 \end{tikzpicture}
\end{minipage}
\caption{A graph $G$ (left), $G_{con}$ (middle) and $G_{con}^c$
  (right).  Since $|E_{con}| - |E| = 1<2=d_{con}$, the necessary
  condition in Thm.~\thmssref{our_nec} does not hold.}
 \label{fig:graph7539}
\end{figure}
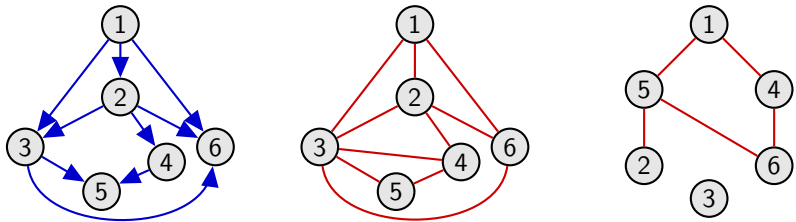

%
%
%

In addition to the closely related work of \cite{stang1997} and
\cite{vicard2000}, identifiability of directed Gaussian models with
one latent variable has been studied by \cite{wermuth2005}.  The
models we treat here
are special cases with the latent node being a common parent of all
the observable nodes.  As we review in more detail in
\secref{prior_work}, we can readily adapt the sufficient graphical
criteria given in \cite{wermuth2005} for certifying that the model
$\mathcal{N}_*(G)$ of a given DAG $G$ is generically finitely
identifiable with respect to \defref{gfiden}.  Our own sufficient
condition stated in \thmref{our_suff} is stronger, in the sense that
every DAG $G$ satisfying the sufficient conditions in
\cite{wermuth2005} necessarily satisfies the condition in
\thmref{our_suff}.  However, when it applies the result of
\cite{wermuth2005} yields a stronger conclusion than our generic
finiteness result.  Indeed as we also emphasize in the discussion in
\secref{discussion}, their conditions imply that the parmetrization is
generically 2-to-1.

We will prove the above stated \thmsref{our_suff} and
\thmssref{our_nec} in \secref{suff_and_nec}.  Since the
parametrization map in \eqref{phi} is polynomial, the generic finite
identifiability of a given model is decidable by algebraic techniques
that involve Gr\"obner basis computations. In \secref{example}, we
will study the applicability of our graphical criteria via such algebraic
computations for all models $\mathcal{N}_*(G)$ of DAGs $G$ with
$m = 4, 5, 6$ nodes.  \secref{hara} will give results on situations
where we can determine generic finite identifiability of a model
$\mathcal{N}_*(G)$ based on knowledge about the generic finite
identifiability of a model $\mathcal{N}_*(G')$, where $G'$ is an
induced subgraph of $G$.

Before ending this introduction, however, we comment on the role that
Markov equivalence plays in our problem. Recall that two DAGs defined
on the same set of nodes are Markov equivalent if they have the same
d-separation relations. The following theorem, which will be proved in Appendix
\ref{sec:proofs}, says that generic finite identifiability is a
property of Markov equivalence classes of DAGs.

\begin{theorem}\label{thm:Markov_equiv}
Suppose $G_1 = (V, E_1)$ and $G_2 = (V, E_2)$ are two Markov equivalent DAGs on the same set of nodes $V$. Then the model $\mathcal{N}_*(G_1)$ is generically finitely identifiable if and only if the same is true for $\mathcal{N}_*(G_2)$.
\end{theorem}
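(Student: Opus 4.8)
The plan is to show that Markov equivalence of $G_1$ and $G_2$ forces the two models to have the same image, and then to read off generic finite identifiability purely from the dimension of that common image via \lemref{localequalfinite}.

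First I would pass to the extended DAGs $\overline{G_1}$ and $\overline{G_2}$ and check that they too are Markov equivalent. By the Verma--Pearl characterization it suffices to compare skeleta and v-structures. Adjoining the source $0$ together with the edges $0\to v$ enlarges the skeleton of each $G_i$ by the same star on $\{0\}\cup V$, so the skeleta of $\overline{G_1}$ and $\overline{G_2}$ agree. Moreover no new immorality is created: the node $0$ is a source and so is never a collider, and whenever $0$ is a parent of some $v$ its co-parent $w$ satisfies $0\to w$ and is therefore adjacent to $0$. Hence every v-structure of $\overline{G_i}$ already lies in $\{1,\dots,m\}$ and coincides with a v-structure of $G_i$; since $G_1$ and $G_2$ share their v-structures, so do $\overline{G_1}$ and $\overline{G_2}$, and the extended DAGs are Markov equivalent.

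Next I would identify $\mathcal{N}_*(G)$ with a marginal of the ordinary Gaussian DAG model of $\overline{G}$. In the latter the source $0$ carries a free error variance $\omega_0>0$, and the induced marginal covariance of $X$ is $(I_m-\Lambda^T)^{-1}(\omega_0\,\delta\delta^T+\Omega)(I_m-\Lambda)^{-1}$; absorbing the scale by setting $\delta'=\sqrt{\omega_0}\,\delta$ shows this marginal family is exactly $\mathcal{N}_*(G)$, so fixing $L$ to be standard normal imposes no restriction on the marginal. Since Markov equivalent Gaussian DAG models coincide as sets of distributions (both equal the set of positive definite covariances obeying the common d-separation relations), the joint models of $\overline{G_1}$ and $\overline{G_2}$ are equal, and therefore so are their $X$-marginals. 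Consequently $\mathrm{im}\,\phi_{G_1}=\mathrm{im}\,\phi_{G_2}$.

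Finally I would invoke \lemref{localequalfinite}: $\phi_G$ is generically finite-to-one iff its Jacobian has generic full column rank, i.e.\ iff the generic Jacobian rank equals the domain dimension $2m+|E|$. Because the generic rank of the Jacobian of a polynomial map equals the dimension of the Zariski closure of its image, this condition reads $\dim\overline{\mathrm{im}\,\phi_G}=2m+|E|$. Markov equivalence forces $G_1$ and $G_2$ to have the same skeleton and hence $|E_1|=|E_2|$, while the previous paragraph gives $\overline{\mathrm{im}\,\phi_{G_1}}=\overline{\mathrm{im}\,\phi_{G_2}}$; comparing these two equalities yields the theorem. I expect the main work to be the image-equality step---verifying that the extended DAGs stay Markov equivalent and handling the fixed unit variance of $L$ by rescaling $\delta$---after which the conclusion is immediate, since equal images together with equal domain dimensions make the two full-rank conditions literally the same.
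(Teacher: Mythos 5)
Your proof is correct, and its second half takes a genuinely different route from the paper's. The setup coincides: like the paper, you pass to the extended DAGs $\overline{G}_1,\overline{G}_2$, verify via the skeleton/v-structure characterization that they remain Markov equivalent (you spell out the details the paper leaves as ``easy to see''), and use the fact that Markov equivalent Gaussian DAG models coincide as sets of covariance matrices. From there the paper proceeds pointwise: by \lemref{obs_ident} each extended parametrization $\Phi_{\overline{G}_i}$ is injective with rational inverse, so $\Phi_{\overline{G}_2}^{-1}\circ\Phi_{\overline{G}_1}$ is a diffeomorphism from $\Theta_1$ to $\Theta_2$ satisfying $\phi_{G_1}=\phi_{G_2}\circ\bigl(\Phi_{\overline{G}_2}^{-1}\circ\Phi_{\overline{G}_1}\bigr)$, and the chain rule transfers generic full column rank of the Jacobian from one map to the other. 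You instead work with images: the rescaling $\delta'=\sqrt{\omega_0}\,\delta$ shows the $X$-marginal is insensitive to fixing the variance of $L$ at one, giving $\mathrm{im}\,\phi_{G_1}=\mathrm{im}\,\phi_{G_2}$, and you then read off generic finiteness from the condition $\dim\overline{\mathrm{im}\,\phi_{G_i}}=2m+|E_i|$ together with $|E_1|=|E_2|$. The trade-offs are as follows. Your argument avoids \lemref{obs_ident} entirely, but it leans on one fact not stated in the paper, namely that the generic Jacobian rank of a polynomial map equals the dimension of the Zariski closure of its image; this is true and standard in semialgebraic geometry (the same toolbox as the rank theorem and Sard's theorem invoked in the paper's appendix), but it deserves a citation or short proof, since \lemref{localequalfinite} as stated only links generic finiteness to generic full column rank, not to image dimension. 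Conversely, the paper's diffeomorphism argument yields more than the theorem asks: it maps fibers of $\phi_{G_1}$ bijectively onto fibers of $\phi_{G_2}$, hence preserves generic fiber cardinality (generic $k$-to-one-ness), whereas your dimension count only delivers the finite/infinite dichotomy---which is, however, all that the statement requires.
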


%


\section{Prior work} \label{sec:prior_work}

\cite{wermuth2005} give sufficient graphical conditions for
identifiability of directed Gaussian graphical models with one latent
variable that can be any node in the DAG.  We revisit their result in
the context of the models from \defref{G*mod} and formulate it in
terms of generic finite identifiability.  (As was mentioned in the
Introduction, their result yields in fact the stronger conclusion of a
generically 2-to-1 parametrization.)  We begin by stating a well-known
fact about DAG models without latent variables.

\begin{lemma}\label{lem:obs_ident}
  For any DAG $G=(V,E)$ with $m=|V|$ nodes, the map
  \[
  (\Lambda,\Omega) \;\mapsto\; ( I_m - \Lambda^T)^{-1} \Omega (I_m -
  \Lambda)^{-1} 
  \]
  is injective on the domain $\mathbb{R}_E\times  \diag_m^+$ and has a
  rational inverse.
\end{lemma}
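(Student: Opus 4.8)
The plan is to recognize the map as a modified Cholesky ($LDL^T$) factorization and then invoke its uniqueness and rationality. First I would use the standing convention that the nodes of $G$ are labeled in topological order, so that $w\to v\in E$ forces $w<v$ and hence every $\Lambda\in\mathbb{R}_E$ is strictly upper triangular. Consequently $I_m-\Lambda^T$ is lower triangular with unit diagonal, and so is its inverse $A:=(I_m-\Lambda^T)^{-1}$. Writing $\Sigma=A\,\Omega\,A^T$ then exhibits the image point as a product $LDL^T$ with $L=A$ unit lower triangular and $D=\Omega\in\diag_m^+$.

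Injectivity then follows from the uniqueness of the $LDL^T$ decomposition of a positive definite matrix: any $\Sigma\in\mathrm{PD}_m$ admits exactly one factorization $\Sigma=LDL^T$ with $L$ unit lower triangular and $D\in\diag_m^+$. Since $(\Lambda,\Omega)\mapsto(A,\Omega)$ is a bijection, with $\Lambda$ recovered from $A$ by $\Lambda=I_m-(A^{-1})^T$, two parameter pairs producing the same $\Sigma$ must have the same $(A,\Omega)$ and hence the same $(\Lambda,\Omega)$. Note that this argument only uses that $\mathbb{R}_E$ sits inside the strictly upper triangular matrices, so injectivity in fact holds on the larger domain of all such matrices, and a fortiori on $\mathbb{R}_E\times\diag_m^+$.

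To see the inverse is rational, I would write the $LDL^T$ factors explicitly in terms of $\Sigma$. Running symmetric Gaussian elimination (equivalently, the recursive Schur-complement scheme) expresses each entry of $L$ and $D$ as a ratio of polynomials in the entries of $\Sigma$; concretely $D_{ii}=\det\Sigma_{[i]}/\det\Sigma_{[i-1]}$, where $\Sigma_{[i]}$ denotes the $i\times i$ leading principal submatrix, with analogous minor-ratio formulas for the off-diagonal entries of $L$. These are honest rational functions, and on the image of $\phi$, which is contained in $\mathrm{PD}_m$, every leading principal minor $\det\Sigma_{[i]}$ is strictly positive, so no denominator vanishes. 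Composing these formulas with the rational matrix inversion $A\mapsto I_m-(A^{-1})^T$ yields a rational map $\psi$ defined on a Zariski-open set containing the image and satisfying $\psi\circ\phi=\mathrm{id}$.

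The only real care needed is bookkeeping: verifying that the standard $LDL^T$ / Gaussian-elimination formulas are genuinely rational and that their denominators, the leading principal minors, are nonvanishing on the positive definite image. The uniqueness of the triangular factorization is classical, so I do not expect a substantive obstacle beyond making these rationality claims precise.
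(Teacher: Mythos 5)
Your proof is correct, but it follows a different route than the paper's. The paper argues directly with the graph structure: by induction on $m$ along a topological ordering, it exhibits the inverse through the parent-set regression formulas
\[
\Lambda_{pa(v),v} = \left(\Sigma_{pa(v),pa(v)}\right)^{-1}\Sigma_{pa(v),v},
\qquad
\Omega_{vv} = \sigma_{vv}-\Sigma_{v,pa(v)}\left(\Sigma_{pa(v),pa(v)}\right)^{-1}\Sigma_{pa(v),v},
\]
i.e., each variable is regressed on its parents only, so the inverse is built from submatrices of $\Sigma$ indexed by $\{v\}\cup pa(v)$. You instead forget the sparsity pattern $\mathbb{R}_E$ entirely, embed it in the space of all strictly upper triangular matrices, and reduce the claim to the classical uniqueness of the unit-triangular $LDL^T$ factorization of a positive definite matrix, with the inverse expressed through leading-principal-minor ratios (equivalently, regression of each variable on \emph{all} of its predecessors). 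Both arguments are sound, and the two rational inverses agree on the image of the parametrization while differing as rational functions off it. Your reduction buys a cleaner appeal to a classical linear-algebra fact and actually proves the stronger statement that the map is injective on the full domain of strictly upper triangular $\Lambda$; the paper's version buys inverse formulas that are local to the graph, which is the form in which this identifiability result is usually quoted in the DAG-model literature (the paper points to Richardson and Spirtes) and which makes transparent that only the covariances among $\{v\}\cup pa(v)$ are needed to recover the parameters at node $v$. For the uses the paper makes of the lemma (e.g., the rational inverse on the common image in the proof of the Markov equivalence theorem), either inverse would serve.
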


\begin{proof}
  For any $(\Lambda,\Omega)\in \mathbb{R}_E\times
  \diag_m^+$, let $\Sigma = (\sigma_{vw}) =( I_m - \Lambda^T)^{-1}
  \Omega (I_m - \Lambda)^{-1}$.
  Let $pa(v) = \{w : w \rightarrow v \in E\}$ be the parent set of the
  node $v$.  Then one can show, by induction on $m$ and considering a
  topological ordering of $V$, that
  \[
  \Lambda_{pa(v),v} \;=\; \left(\Sigma_{pa(v),pa(v)}\right)^{-1} \Sigma_{pa(v),v}
  \]
  and
  \[
  \Omega_{vv} \;=\;
  \sigma_{vv}-\Sigma_{v,pa(v)}\left(\Sigma_{pa(v),pa(v)}\right)^{-1}
  \Sigma_{pa(v),v};
  \]
  compare, for instance, \citet[\S8]{richardson:ancestral:2002}.
\end{proof}

Let the random vector $X$ and the latent variable $L$ have their joint
distribution specified via the equation system from~\eqref{Matsys}.
Write $\Sigma_{|L}$ for the conditional covariance matrix of $X$ given
$L$.  Then it holds that
\begin{equation}
  \label{eq:sigmaL}
  \Sigma_{|L} = ( I_m - \Lambda^T)^{-1} \Omega (I_m - \Lambda)^{-1}.
\end{equation}
Hence, by Lemma~\ref{lem:obs_ident}, when knowing $\Sigma_{|L}$ we can
uniquely solve for the pair $(\Lambda, \Omega)$, which are rational
functions of $\Sigma_{|L}$.  Writing $\Sigma$ for the (unconditional)
covariance matrix of $X$, we have from~(\ref{sigmacov}) that
\[
\Sigma_{|L} = \Sigma - (I_m - \Lambda^T)^{-1} \delta \delta^T (I_m -
\Lambda)^{-1}. 
\]
Consequently, $(\Lambda,\Omega)$ can be recovered uniquely from
$\Sigma$ and $(I_m - \Lambda^T)^{-1} \delta$.  The results of
\cite{wermuth2005} then address identification of the vector
$(I_m - \Lambda^T)^{-1} \delta$, which holds the covariances between
each coordinate of $X$ and the latent variable $L$.  We obtain the
following observation.

\begin{proposition}[Adapted from Stanghellini and Wermuth,
  2005]\label{prop:wermuth2005}
Let $G = (V, E)$ be a DAG. The model $\mathcal{N}_*(G)$ is generically
finitely identifiable if 
\begin{enumerate}
\item every connected component of $G_{|L, cov}^c = (V, E_{|L, cov}^c)$ has an odd cycle, or
\item every connected component of $G_{con}^c = (V, E_{con}^c)$ has an odd cycle.
\end{enumerate}
\end{proposition}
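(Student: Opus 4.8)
The plan is to build on the reduction established just before the statement and to recast generic finite identifiability as the problem of recovering, up to finitely many sign choices, the vector $\gamma := (I_m - \Lambda^T)^{-1}\delta$ of covariances between $X$ and $L$. Since $\Sigma = \Sigma_{|L} + \gamma\gamma^T$, and since \lemref{obs_ident} expresses $(\Lambda,\Omega)$ as a rational function of $\Sigma_{|L}$ with $\delta = (I_m-\Lambda^T)\gamma$, it suffices to show that a generic target $\Sigma$ admits only finitely many candidate $\gamma$; each candidate then yields $\Sigma_{|L} = \Sigma - \gamma\gamma^T$ and a single parameter triple, so every fiber of $\phi_G$ is finite. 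The point that makes the graphical hypotheses usable is that any preimage $(\Lambda',\Omega',\delta')\in\Theta$ of a fixed $\Sigma$ satisfies the same polynomial vanishing relations that define the complement graphs: $v \text{ --- } w\in E_{|L, cov}^c$ means $(\Sigma_{|L})_{vw}$ vanishes identically on $\Theta$, and likewise $v \text{ --- } w\in E_{con}^c$ means $(\Sigma_{|L}^{-1})_{vw}\equiv 0$.

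The combinatorial heart of the argument is an elementary fact I would isolate as a lemma: if $x\in\mathbb{R}^m$ has all entries nonzero and the products $x_vx_w$ are known for every edge $v \text{ --- } w$ of a connected graph $H$ that contains an odd cycle, then $x$ is determined up to an overall sign. The key computation is that along an odd cycle $v_1 \text{ --- } v_2 \text{ --- } \cdots \text{ --- } v_{2k+1} \text{ --- } v_1$ the alternating product of the edge-products collapses to a perfect square:
\[
\frac{(x_{v_1}x_{v_2})(x_{v_3}x_{v_4})\cdots(x_{v_{2k+1}}x_{v_1})}{(x_{v_2}x_{v_3})(x_{v_4}x_{v_5})\cdots(x_{v_{2k}}x_{v_{2k+1}})} = x_{v_1}^2.
\]
Thus each $x_v^2$, hence $|x_v|$, is recovered on the cycle and then, by connectivity and the known products, at every vertex; fixing the sign of one vertex and propagating determines the rest, leaving only the overall sign free. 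The odd cycle is precisely what pins down the magnitudes: a connected \emph{bipartite} $H$ admits the continuous rescaling $x_a\mapsto \lambda x_a$, $x_b\mapsto \lambda^{-1}x_b$ across the two sides, which would make the fiber infinite.

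Under condition (i) I would argue as follows. For every non-edge $v \text{ --- } w\in E_{|L, cov}^c$ we have $(\Sigma_{|L})_{vw}\equiv 0$, so $\Sigma_{vw} = \gamma_v\gamma_w$ is a known entry of the target. The connected components of $G_{|L, cov}^c$ partition $V$, and by hypothesis each contains an odd cycle, so the lemma applied componentwise recovers $\gamma$ up to one sign per component, hence to finitely many candidates. Each candidate determines $\Sigma_{|L}$ and then $(\Lambda,\Omega,\delta)$ uniquely via \lemref{obs_ident}, so the fiber is finite. All steps are valid away from the proper algebraic subset of $\Theta$ on which some $\gamma_v$ or some required product vanishes, giving generic finiteness.

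Condition (ii) runs through the same scheme after inverting, via Sherman--Morrison,
\[
\Sigma^{-1} = \Sigma_{|L}^{-1} - \frac{\eta\eta^T}{1+\gamma^T\eta}, \qquad \eta := \Sigma_{|L}^{-1}\gamma,
\]
where $\gamma^T\eta = \gamma^T\Sigma_{|L}^{-1}\gamma>0$, so $c:=(1+\gamma^T\eta)^{-1}\in(0,1)$. Setting $\zeta := \sqrt{c}\,\eta$ yields the clean identity $\Sigma_{|L}^{-1} = \Sigma^{-1} + \zeta\zeta^T$; for $v \text{ --- } w\in E_{con}^c$ we have $(\Sigma_{|L}^{-1})_{vw}\equiv 0$, so $(\Sigma^{-1})_{vw} = -\zeta_v\zeta_w$ is known. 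Applying the lemma componentwise to $G_{con}^c$ recovers $\zeta$ up to finitely many signs, whence $\Sigma_{|L}^{-1} = \Sigma^{-1}+\zeta\zeta^T$ and the full parameter triple follow, again away from a proper algebraic set. The main obstacle — and the reason (ii) is not a cosmetic variant of (i) — is exactly this reparametrization: Sherman--Morrison injects the unknown scalar $c$, and the trick is to absorb it into $\zeta$ so that the products $\zeta_v\zeta_w$ read off from $\Sigma^{-1}$ reconstruct $\Sigma_{|L}^{-1}$ directly, without having to identify $c$ or $\gamma$ separately.
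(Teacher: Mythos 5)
Your proof is correct, and it takes a more self-contained route than the paper. The paper's own proof is a reduction plus a citation: it invokes Theorem 1 of Stanghellini and Wermuth (2005) as a black box to conclude that (i) or (ii) identifies $\gamma = (I_m - \Lambda^T)^{-1}\delta$ up to sign, and then finishes exactly as you do (recover $\Sigma_{|L} = \Sigma - \gamma\gamma^T$, then $(\Lambda,\Omega)$ by \lemref{obs_ident}, then $\delta$ up to sign). You instead prove the black-boxed step: your odd-cycle identity, collapsing an alternating product of edge-products to $x_{v_1}^2$, is exactly the mechanism behind Lemma 1 of Vicard (2000), which the paper itself invokes (in Jacobian/dimension form) when proving \lemref{lem_vicard}; and your Sherman--Morrison normalization $\zeta := \sqrt{c}\,\Sigma_{|L}^{-1}\gamma$, giving $\Sigma_{|L}^{-1} = \Sigma^{-1} + \zeta\zeta^T$, handles the concentration condition (ii) cleanly, without ever identifying $c$ or $\gamma$ separately. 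The trade-off: the citation route is shorter and yields a strictly stronger conclusion --- Stanghellini and Wermuth pin down $\gamma$ up to a single global sign, so the parametrization is generically 2-to-one, a point the paper emphasizes in \secref{discussion} --- whereas your componentwise argument only bounds the candidates for $\gamma$ (resp.\ $\zeta$) by $2^{d}$, with $d$ the number of connected components of the relevant complement graph; that suffices for generic finite identifiability, which is all the proposition asserts, but it does not by itself give 2-to-one. One hygiene point your write-up leaves implicit, though it does hold: the nonvanishing hypothesis of your lemma must be satisfied by \emph{every} preimage's $\gamma$ (or $\zeta$) in the fiber, not just by the generic base point; this is automatic because for a generic base point the known products are all nonzero, and every vertex of a component containing an odd cycle is incident to at least one edge, which forces all entries of any solution to be nonzero.
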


\begin{proof}
  Theorem $1$ in \cite{wermuth2005} gives $(i)$ or $(ii)$ as a
  sufficient condition for identifying, up to sign, the $m$-vector
  $(I_m - \Lambda^T)^{-1} \delta$ when
  $\Sigma = \phi_G (\Lambda, \Omega, \delta)$ for a generic point
  $(\Lambda, \Omega, \delta)$ in $\Theta$.  In this case, we can
  uniquely recover the conditional covariance matrix $\Sigma_{|L}$
  from~(\ref{eq:sigmaL}) and also the pair $(\Lambda,\Omega)$ by
  \lemref{obs_ident}. After identifying $\Lambda$, $\delta$ can be
  solved for, up to sign, by the previous knowledge of
  $(I_m - \Lambda^T)^{-1} \delta$.  Hence, $(i)$ or $(ii)$ is in fact
  a sufficient condition for generic finite identifiability of
  $\mathcal{N}_*(G)$.
\end{proof}

\figref{SWDAG} shows a DAG with $m = 5$ nodes that satisfies the condition
of \propref{wermuth2005}$(ii)$.

\begin{figure}[t] 
\centering
\ \

\ \ 
 \begin{minipage}[t]{0.3\hsize}
\centering
 \begin{tikzpicture}[->,>=triangle 45, shorten >=0pt,
        auto,thick,
        main node/.style={circle,inner sep=2pt,fill=gray!20,draw,font=\sffamily}]

        \node[main node,rounded corners] (1) {1};
        \node[main node,rounded corners] [below left=.5cm and .3cm of 1](3) {3};
 \node[main node,rounded corners] [below right=.5cm and .3cm of 1](2) {2};
 \node[main node,rounded corners] [below = .5cm of 3](4) {4};
 \node[main node,rounded corners] [below = .5cm of 2](5) {5};

        \path[color=black!20!blue,->,every
        node/.style={font=\sffamily\small}]
    (1) edge node {} (2)
        (1) edge node {} (3)
        (2)  edge node {} (3)
       (3) edge node {} (4) 
        (2) edge node {} (5);
 \end{tikzpicture}

\end{minipage}
 \begin{minipage}[t]{0.3\hsize}
\centering
 \begin{tikzpicture}[->,>=triangle 45, shorten >=0pt,
        auto,thick,
        main node/.style={circle,inner sep=2pt,fill=gray!20,draw,font=\sffamily}]

        \node[main node,rounded corners] (1) {1};
        \node[main node,rounded corners] [below left=.5cm and .3cm of 1](3) {3};
 \node[main node,rounded corners] [below right=.5cm and .3cm of 1](2) {2};
 \node[main node,rounded corners] [below = .5cm of 3](4) {4};
 \node[main node,rounded corners] [below = .5cm of 2](5) {5};

        \path[color=black!20!red,-,every
        node/.style={font=\sffamily\small}]
    (1) edge node {} (2)
        (1) edge node {} (3)
        (2)  edge node {} (3)
       (3) edge node {} (4) 
        (2) edge node {} (5);
 \end{tikzpicture}

\end{minipage}
  \begin{minipage}[t]{0.3\hsize}
\centering
 \begin{tikzpicture}[->,>=triangle 45, shorten >=0pt,
        auto,thick,
        main node/.style={circle,inner sep=2pt,fill=gray!20,draw,font=\sffamily}]

        \node[main node,rounded corners] (1) {1};
        \node[main node,rounded corners] [below left= 1cm and .2cm of 1](4) {4};
 \node[main node,rounded corners] [below right=1cm and .2cm of 1](5) {5};
 \node[main node,rounded corners] [left = .4cm of 1](2) {2};
 \node[main node,rounded corners] [right = .4cm of 1](3) {3};

        \path[color=black!20!red,-,every
        node/.style={font=\sffamily\small}]
    (1) edge node {} (4)
        (1) edge node {} (5)
        (4)  edge node {} (5)
       (2) edge node {} (4) 
        (3) edge node {} (5);
 \end{tikzpicture}
\end{minipage}
\caption{A graph $G$ (left) satisfying the sufficient condition in
  \propref{wermuth2005}, with $G_{con}$ (middle) and
  $G_{con}^c$ (right).}
 \label{fig:SWDAG}
\end{figure}

We conclude this review of prior work by pointing out that any model
$\mathcal{N}_* (G)$ that can be determined to be generically finitely
identifiable using \propref{wermuth2005} can also be found to have
this property using our new \thmref{our_suff}.

\begin{proposition}\label{prop:stronger}
  A DAG $G=(V,E)$ satisfying either one of the conditions in
  \propref{wermuth2005} necessarily satisfies the condition in
  \thmref{our_suff}.
\end{proposition}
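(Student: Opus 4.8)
The plan is to reduce the proposition to a purely graph-theoretic monotonicity statement about odd cycles. The structural fact driving everything is that the skeleton of $G$ is a spanning subgraph of both $G_{con}$ and $G_{|L, cov}$; passing to complements (which reverses inclusion) this says precisely that $G_{con}^c$ and $G_{|L, cov}^c$ are each spanning subgraphs of $G^c$, i.e.~$E_{con}^c \subseteq E^c$ and $E_{|L, cov}^c \subseteq E^c$. Once this is in hand, the conditions in \propref{wermuth2005} will be seen to be no weaker than the condition in \thmref{our_suff}.

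First I would verify these two inclusions at the level of the skeleton. For $G_{|L, cov}$, a directed edge $v \rightarrow w \in E$ is itself a trek, so the entry $(\Sigma_{|L})_{vw}$ contains the monomial $\omega_v \lambda_{vw}$ and is therefore not identically zero, giving $v \text{ --- } w \in E_{|L, cov}$. For $G_{con}$, using $\Sigma_{|L}^{-1} = (I_m - \Lambda)\Omega^{-1}(I_m - \Lambda^T)$, the same directed edge $v \rightarrow w$ contributes the monomial $-\lambda_{vw}\omega_w^{-1}$ to $(\Sigma_{|L}^{-1})_{vw}$, so this entry is again not identically zero and $v \text{ --- } w \in E_{con}$. (Both are equally immediate from the d-separation description on $\overline{G}$: an adjacent pair is never d-separated.) Thus every edge of the skeleton of $G$ survives into $E_{con}$ and into $E_{|L, cov}$, which yields the claimed inclusions of the complements.

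Next I would isolate the following monotonicity lemma: if $H'$ is a spanning subgraph of an undirected graph $H$ on the same vertex set $V$, and every connected component of $H'$ contains an odd cycle, then every connected component of $H$ contains an odd cycle. The argument is short. Passing from $H'$ to $H$ only adds edges, which can merge connected components but never split them, so each component $C$ of $H$ is a union of components of $H'$. Picking any vertex $v \in C$, its $H'$-component lies inside $C$ and, by hypothesis, already contains an odd cycle; the edges of that cycle persist in $H$, so $C$ contains an odd cycle.

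Finally, combining the two steps closes the argument. Condition $(ii)$ of \propref{wermuth2005} states that every connected component of $G_{con}^c$ contains an odd cycle; since $G_{con}^c$ is a spanning subgraph of $G^c$, the lemma gives that every connected component of $G^c$ contains an odd cycle, which is exactly the hypothesis of \thmref{our_suff}. Condition $(i)$ is handled identically using $G_{|L, cov}^c \subseteq G^c$. I do not anticipate a genuine obstacle: the only points needing care are to get the complement inclusions in the correct direction, and to confirm that a directed edge of $G$ contributes a genuinely nonvanishing polynomial entry (not merely a generically nonzero value), as the $\not\equiv 0$ convention defining $E_{con}$ and $E_{|L, cov}$ requires.
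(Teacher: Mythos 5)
Your proposal is correct and follows essentially the same route as the paper's proof: establish that every edge of $G$ survives (as an undirected edge) into both $E_{|L,cov}$ and $E_{con}$, so that $G^c$ is a supergraph of $G_{|L,cov}^c$ and $G_{con}^c$, and then observe that the odd-cycle-in-every-component property passes from a spanning subgraph to any supergraph. The only difference is one of explicitness — you verify the edge inclusions via the nonvanishing monomials $\omega_v\lambda_{vw}$ and $-\lambda_{vw}\omega_w^{-1}$ and state the component-merging lemma in full, where the paper simply asserts both facts.
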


\begin{proof}
  Let $G_{|L, cov} = (V, E_{|L, cov})$ and $G_{con} = (V, E_{con})$.
  An edge $v \rightarrow w \in E$ also present itself as an undirected
  edge in both $E_{|L, cov}$ and $E_{con}$.  Hence, when ignoring the
  directionality of its edges, $G$ is a subgraph of both $G_{|L, cov}$
  and $G_{con}$ and, thus, $G^c$ is a supergraph of both
  $G_{|L, cov}^c$ and $G_{con}^c$. As such, if every connected
  component of $G_{|L, cov}^c$, or of $G_{con}^c$, contains an odd cycle,
  the same is true of $G^c$.
\end{proof}

\section{Criteria based on the Jacobian of  parametrization maps} \label{sec:suff_and_nec}

In this section, we prove \thmsref{our_suff} and \thmssref{our_nec}.
Let $G = (V, E)$ be a fixed DAG with $m=|V|$ nodes, and let
$\Theta := \mathbb{R}_E \times \diag_m^+ \times \mathbb{R}^m$ denote
again the domain of the parametrization 
\begin{align*} 
\phi_{G}  :
(\Lambda, \Omega, \delta) &\longmapsto ( I_m - \Lambda^T)^{-1} (\Omega + \delta \delta^T)(I_m - \Lambda)^{-1}
\end{align*}
of the covariance matrix of the distributions in model
$\mathcal{N}_*(G)$.  We begin by introducing other mappings that are
generically finite-to-one if and only if $\phi_G$ is generically
finite-to-one.

First, it will be helpful to study the map
\begin{equation}\label{phi_tilde}
\tilde{\phi}_{G} : (\Lambda, \Omega, \delta) \longmapsto ( I_m - \Lambda^T)^{-1} \Omega (I_m - \Lambda)^{-1} + \delta \delta^T,
\end{equation}
defined on $\Theta$.  Second, focusing on concentration instead of
covariance matrices, we will also consider the maps
\begin{align}
 \label{varphi}
  \varphi_{G} :  (\Lambda, \Psi, \gamma) 
  &\longmapsto ( I_m - \Lambda) (\Psi- \gamma \gamma^T)(I_m - \Lambda^T),\\
  \label{varphi_tilde}
\tilde{\varphi}_G : (\Lambda, \Psi, \gamma) 
  &\longmapsto (I_m - \Lambda) \Psi(I_m - \Lambda^T) - \gamma \gamma^T.
\end{align}

\begin{lemma}
  \label{lem:equiv_maps}
  The parametrization $\phi_G$ is generically finite-to-one if and
  only if any one of the maps $\tilde{\phi}_{G}$, $\varphi_{G}$ and
  $\tilde{\varphi}_G$ is generically finite-to-one.
\end{lemma}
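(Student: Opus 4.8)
My plan is to reduce the entire statement to a comparison of generic Jacobian ranks. By \lemref{localequalfinite}, a polynomial map on an open subset of Euclidean space is generically finite-to-one precisely when its Jacobian has generically full column rank. Since $\phi_G$, $\tilde{\phi}_G$, $\varphi_G$ and $\tilde{\varphi}_G$ are all polynomial maps on the common domain $\Theta = \mathbb{R}_E \times \diag_m^+ \times \mathbb{R}^m$ (an open subset of $\mathbb{R}^{2m+|E|}$), and all four Jacobians have the same number of columns, it suffices to show that the four Jacobians share the same generic rank. I would establish this by exhibiting, for each pair, a smooth reparametrization of the domain and/or a smooth transformation of the codomain whose derivatives are everywhere invertible, so that the chain rule forces the relevant Jacobian ranks to agree pointwise, up to transporting the base point.

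The two equivalences \emph{within} each side come from polynomial automorphisms of $\Theta$. Setting $\rho(\Lambda,\Omega,\delta) = (\Lambda,\Omega,(I_m-\Lambda^T)^{-1}\delta)$, which is polynomial because $(I_m-\Lambda^T)^{-1} = I_m + \Lambda^T + \dots + (\Lambda^T)^{m-1}$ and which has the polynomial inverse $(\Lambda,\Omega,\delta') \mapsto (\Lambda,\Omega,(I_m-\Lambda^T)\delta')$, a direct expansion gives $\phi_G = \tilde{\phi}_G \circ \rho$. Likewise $\rho'(\Lambda,\Psi,\gamma) = (\Lambda,\Psi,(I_m-\Lambda)\gamma)$ is a polynomial automorphism of $\Theta$ with $\varphi_G = \tilde{\varphi}_G \circ \rho'$. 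Since a polynomial automorphism has an everywhere-invertible Jacobian and is a bijection of $\Theta$, the chain rule yields $\operatorname{rank} J_{\phi_G}(s) = \operatorname{rank} J_{\tilde{\phi}_G}(\rho(s))$ and $\operatorname{rank} J_{\varphi_G}(s) = \operatorname{rank} J_{\tilde{\varphi}_G}(\rho'(s))$ for every $s \in \Theta$; as $\rho$ and $\rho'$ are onto, the generic ranks of $\phi_G$ and $\tilde{\phi}_G$ agree, and likewise for $\varphi_G$ and $\tilde{\varphi}_G$.

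To connect the covariance side to the concentration side I would pass through matrix inversion $\iota : \Sigma \mapsto \Sigma^{-1}$, which is a diffeomorphism of the open set of invertible symmetric matrices (its derivative $H \mapsto -\Sigma^{-1} H \Sigma^{-1}$ is a linear isomorphism), noting that the image of $\phi_G$ lies in $\text{PD}_m$. A Sherman--Morrison computation, $(\Omega + \delta\delta^T)^{-1} = \Omega^{-1} - (1+\delta^T\Omega^{-1}\delta)^{-1}\,\Omega^{-1}\delta\delta^T\Omega^{-1}$, shows that with $\sigma(\Lambda,\Omega,\delta) = (\Lambda,\,\Omega^{-1},\,\Omega^{-1}\delta/\sqrt{1+\delta^T\Omega^{-1}\delta})$ one has $\iota \circ \phi_G = \varphi_G \circ \sigma$ on $\Theta$. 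The map $\sigma$ is a diffeomorphism of $\Theta$ onto the open set $S' := \{(\Lambda,\Psi,\gamma) \in \Theta : \gamma^T\Psi^{-1}\gamma < 1\}$, so its derivative is everywhere invertible. Differentiating the identity and using that $D\iota$ and $D\sigma$ are isomorphisms gives $\operatorname{rank} J_{\phi_G}(s) = \operatorname{rank} J_{\varphi_G}(\sigma(s))$ for all $s \in \Theta$, whence the generic rank of $J_{\phi_G}$ over $\Theta$ equals the generic rank of $J_{\varphi_G}$ over $S'$.

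The one point needing care --- and the main obstacle --- is that $S'$ is only an open \emph{subset} of $\Theta$, not the complement of a proper algebraic set, so I cannot directly transport an exceptional null set through $\sigma$. This is exactly why I work with Jacobian ranks rather than fibers: the condition $\operatorname{rank} J_{\varphi_G} \ge r$ is the non-vanishing of some $r \times r$ minor, a polynomial in the coordinates, and a polynomial that does not vanish identically cannot vanish on the non-empty open set $S'$. Hence the generic (maximal) rank of the polynomial Jacobian $J_{\varphi_G}$ is the same over $S'$ as over all of $\Theta$, and the chain of equalities closes: $J_{\phi_G}$ is generically of full column rank if and only if $J_{\varphi_G}$ is. Combining this with the two automorphism equivalences and invoking \lemref{localequalfinite} once more establishes that $\phi_G$ is generically finite-to-one if and only if each of $\tilde{\phi}_G$, $\varphi_G$, $\tilde{\varphi}_G$ is. The remaining verifications --- the two reparametrization identities and the Sherman--Morrison step --- are routine algebra.
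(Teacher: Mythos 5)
Your proof is correct and follows essentially the same route as the paper's: the same two domain reparametrizations (the paper's $g$ and $h$ are exactly your $\rho$ and $\rho'$), the same Sherman--Morrison identity linking $\phi_G$ to $\varphi_G$ through matrix inversion, and the same reduction to generic Jacobian rank via \lemref{localequalfinite} together with the fact that a nonzero polynomial minor cannot vanish on a nonempty open set. You are in fact slightly more careful than the paper on one point: the paper asserts that its map $\rho:(\Lambda,\Omega,\delta)\mapsto\bigl(\Lambda,\Omega^{-1},(1+\delta^T\Omega^{-1}\delta)^{-1/2}\Omega^{-1}\delta\bigr)$ is a diffeomorphism of $\Theta$ onto itself, whereas, as you correctly note, its image is only the proper open subset $\{(\Lambda,\Psi,\gamma)\in\Theta:\gamma^T\Psi^{-1}\gamma<1\}$, a gap that your polynomial-minor argument closes cleanly.
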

\begin{proof}
  Consider first the map $\tilde{\phi}_G$ for which it holds that
  $\phi_G = \tilde{\phi}_G \circ g$, where
  \[
  g : (\Lambda, \Omega, \delta) \longmapsto (\Lambda, \Omega, ( I_m -
  \Lambda^T)^{-1}\delta)
  \]
  is a diffeomorphism that maps $\Theta$ to itself.  By the chain
  rule, the Jacobian of $\phi_G$ at $(\Lambda, \Omega, \delta)$ is the
  product of the Jacobian of $\tilde{\phi}_G$ at
  $g(\Lambda, \Omega, \delta)$ and the Jacobian of $g$ at
  $(\Lambda,\Omega,\delta)$.  Now the latter matrix is invertible on
  all of $\Theta$ since $g$ is a diffeomorphism.  It follows that
  there exists a point in $\Theta$ at which the Jacobian of $\phi_G$
  has full column rank if and only if the same is true for
  $\tilde{\phi}_G$.  For the Jacobian of a polynomial map such as
  $\phi_G$ and $\tilde{\phi}_G$, full column rank at a single point
  implies generically full column rank; use the subdeterminants that
  characterize a drop in rank to define a proper algebraic subset of
  exceptions, see also \citet[Lemma 9]{geiger:2001}.  The claim about
  $\phi_G$ and $\tilde{\phi}_G$ follows from
  \lemref{localequalfinite}.

  Let $h : (\Lambda, \Psi, \gamma) \longmapsto (\Lambda, \Psi, (I_m - \Lambda)\gamma)$. Since $\varphi_G =\tilde{\varphi}_G  \circ  h$, by the same argument as above it also holds
  that $\varphi_G$ is generically finite-to-one if and only if
  $\tilde{\phi}_G$ has this property.

  In order to complete the proof of the lemma it suffices to show that
  $\phi_G$ is generically finite-to-one if and only if the same holds
  for $\varphi_G$.  Define another diffeomorphism from $\Theta$ to
  itself as
  \[
  \rho :(\Lambda, \Omega, \delta)\longmapsto \left(\Lambda, \Omega^{-1},
  (1+\delta^T\Omega^{-1}\delta)^{-1/2} \Omega^{-1}\delta\right).
  \]
  Writing $inv$ for matrix inversion, we then have that 
  \begin{equation}
    \label{eq:varphi:claim}
    \ inv \circ \phi_G =\varphi_G \circ \rho
  \end{equation}
  because of the identity $(\Omega + \delta \delta^T)^{-1} = (\Psi- \gamma \gamma^T)$
  with $\Psi=\Omega^{-1}$ and $\gamma = k^{-1/2} \Psi\delta$, where
  $k = 1 + \delta^T \Psi \delta>0$; see e.g.~\citet[p.~33]{raolinear}.
  Using (\ref{eq:varphi:claim}), 
  the equivalence of being generically finite-to-one for $\phi_G$ and
  $\varphi_G$ may be argued similarly as for the maps considered
  earlier.    
\end{proof}

Let $J(\tilde{\varphi}_G)$ be the Jacobian matrix of the map
$\tilde{\varphi}_G$ from \eqref{varphi_tilde}. It will be examined to
prove \thmref{our_suff}.  In light of \lemsref{localequalfinite} and
\lemssref{equiv_maps}, we will show that if $G$ satisfies the
condition in \thmref{our_suff}, then $J(\tilde{\varphi}_G)$ is
generically of full column rank, implying that $\phi_G$ is generically
finite-to-one.  Our arguments will make use of the following lemma
that rests on observations made in \cite{vicard2000}.

\begin{lemma}\label{lem:lem_vicard}
  Let $G = (V, E)$ be an undirected graph, and let
  $f_G:\mathbb{R}^V\to\mathbb{R}^E$ be the map with coordinate
  functions
  \[
  f_{G,vw}(x) \;=\; x_vx_w, \quad v \text{ --- } w \in E.
  \]
  Then the Jacobian of $f_G$ has generic rank $m-d$, where $m=|V|$ is
  the number of nodes and $d$ is the number of connected components of
  $G$ that do not contain an odd cycle.
\end{lemma}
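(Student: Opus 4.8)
The plan is to compute the generic dimension of the kernel of the Jacobian $J(f_G)$, regarded as a linear map $\mathbb{R}^V \to \mathbb{R}^E$, and to show it equals $d$; the rank is then $m-d$. First I would write $J(f_G)$ down explicitly. Since $\partial(x_v x_w)/\partial x_u$ equals $x_w$ when $u=v$, equals $x_v$ when $u=w$, and vanishes otherwise, the row of $J(f_G)$ indexed by the edge $v \text{ --- } w \in E$ has exactly two nonzero entries: $x_w$ in column $v$ and $x_v$ in column $w$. Hence a vector $\xi = (\xi_u)_{u \in V}$ lies in the kernel of $J(f_G)$ if and only if
\[
x_w \xi_v + x_v \xi_w = 0 \qquad \text{for every edge } v \text{ --- } w \in E.
\]

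Next I would pass to a generic point. On the dense open set $U = \{x \in \mathbb{R}^V : x_u \neq 0 \text{ for all } u \in V\}$ the substitution $\eta_u := \xi_u/x_u$ is a linear isomorphism on the fibre, and the kernel condition becomes the purely combinatorial system
\[
\eta_v + \eta_w = 0 \qquad \text{for every edge } v \text{ --- } w \in E,
\]
so that $\eta$ assigns opposite values to adjacent vertices. This system decouples over the connected components of $G$, so it suffices to analyze each component on its own.

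The combinatorial core is the bipartite/non-bipartite dichotomy. On a connected component I would fix a base vertex and propagate the relation $\eta_v = -\eta_w$ along edges, so that the value of $\eta$ at any vertex is determined, up to a single free scalar, by the parity of the length of a connecting walk. If the component contains no odd cycle it is bipartite, the two sides are well defined, and the solution space is one-dimensional ($\eta$ constant on one side and its negative on the other; an isolated vertex is the degenerate bipartite case with one free value). If the component contains an odd cycle, traversing that cycle forces $\eta_v = -\eta_v$ at some vertex, whence $\eta \equiv 0$ on the whole component. Summing over components, the solution space for $\eta$, and therefore $\ker J(f_G)$ for $x \in U$, has dimension exactly $d$.

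Finally I would conclude as follows. Since $\ker J(f_G)$ has constant dimension $d$ throughout the dense open set $U$, the rank of $J(f_G)$ is identically $m-d$ on $U$. Because the locus where a fixed $r \times r$ minor is nonzero is Zariski-open, the generic rank of a matrix with polynomial entries is attained on a dense open set; as $U$ is dense, the generic rank equals $m-d$. The one step requiring care---the main obstacle---is the passage from the algebraic kernel equations to the graph-theoretic count, that is, verifying that the odd-cycle obstruction is precisely what annihilates the free parameter on a component. This is exactly where the hypothesis enters, and it is dispatched by the parity argument above.
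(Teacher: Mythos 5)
Your proof is correct, but it takes a genuinely different route from the paper's. The paper argues indirectly: it invokes the rank theorem to equate the generic kernel dimension of $J_{f_G}$ with the generic dimension of the fibers of $f_G$, and then determines the fiber dimension by citing Lemma~1 of Vicard (2000) on the solution sets of the multiplicative systems $y_{vw}=x_vx_w$ over each connected component (two isolated solutions when the component has an odd cycle, a one-parameter family otherwise). You instead work directly with the linear system defining $\ker J_{f_G}(x)$: the row for the edge $v\text{ --- }w$ has entries $x_w$ and $x_v$, and on the open set where all $x_u\neq 0$ the diagonal substitution $\eta_u=\xi_u/x_u$ turns the kernel equations into the purely combinatorial system $\eta_v+\eta_w=0$ over all edges, which decouples over components and is settled by the bipartite/odd-cycle parity dichotomy (including isolated vertices as the degenerate bipartite case). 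Your argument is more elementary and self-contained: it needs neither the rank theorem from differential topology nor the external reference, and it yields the stronger statement that the rank is \emph{constant} equal to $m-d$ on the whole dense open set $\{x:\prod_u x_u\neq 0\}$, from which genericity follows since the locus of maximal rank is a nonempty Zariski-open set that must meet this set. What the paper's route buys in exchange is explicit information about the fibers of $f_G$ themselves (two points versus a one-dimensional solution curve), which resonates with the identifiability theme and the generically 2-to-1 statements elsewhere in the paper, whereas your kernel computation sees only the infinitesimal picture.
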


\begin{proof}
  For simpler notation, let $f:=f_G$.  Let $J_f$ be the Jacobian
  matrix of the polynomial map $f$, and let $\text{ker}(J_f)$ be its
  kernel.  By the rank theorem \citep[p.~229]{babyrudin}, the
  dimension of $\text{ker}(J_f)$ is generically equal to the dimension
  of the fiber $\mathcal{F}_f$; recall~(\ref{eq:fiber:notation}).
  Since $\text{rank} (J_f) = m - \dim(\text{ker}(J_f))$, it suffices
  to show that $\mathcal{F}_f$ has generic dimension $d$.
  
  Since the claim is about a generic property, we may restrict the
  domain of $f$ to the open set
  $\mathcal{X} := (\mathbb{R}\setminus\{0\})^m$.  This assumption is
  made so that Lemma $1$ in \cite{vicard2000} is applicable later
  without difficulty.  Now, fix a point
  $y \in f(\mathcal{X})\subset\mathbb{R}^E$.  The elements of the
  fiber $\mathcal{F}_f(y)$ are the vectors $x\in\mathbb{R}^m$, or
  equivalently, $x\in\mathcal{X}$, that are solutions to the system of
  equations
  \begin{equation} \label{f_sys}
    y_{vw} = x_v x_w, \quad v\text{ --- } w\in E.
  \end{equation} 

  Let $G_1= (V_1, E_1), \dots, G_k= (V_k, E_k)$ be the connected
  components of $G$, so that $V_1,\dots,V_k$ form a partition of $V$
  and $E_1,\dots,E_k$ partition $E$.  Let $k' \leq k$ be the number of
  connected components containing two nodes at least. Without loss
  of generality, assume $G_{k'+1}, \dots, G_k$ are all the
  connected components with only a single node.  Then the equations
  listed in \eqref{f_sys} can be arranged to form $k'$ disjoint
  subsystems indexed by $i=1, \dots, k'$.  The $i$-th subsystem has the
  form
  \begin{equation} \label{f_subsys}
    y_{vw} = x_v x_w, \quad v\text{ --- }w \in E_i
  \end{equation} 
  and exclusively involves the variables $\{x_v : v \in V_i\}$.  By
  Lemma $1$ in \cite{vicard2000} and also the relevant discussion in
  the proof of Theorem $1$ in the same paper, the solution set
  to~(\ref{f_subsys}) either contains two points or can be
  parametrized by a single free variable in $\mathbb{R}$.  The former
  case arises if and only if $G_i $ contains an odd cycle.  It follows
  that the dimension of the solution set of~(\ref{f_subsys}) is zero
  when $G_i$ contains an odd cycle, and it has dimension one if $G_i$
  does not contain an odd cycle.  In addition, each singleton
  component $G_i = (V_i, \emptyset)$ for $i = k'+1, \dots, k$ provides
  one additional dimension to the fiber $\mathcal{F}_f(y)$, since the
  corresponding variables in $x$ are not restricted by any equations.
  We conclude that the dimension of $\mathcal{F}_f(y)$ equals the
  number of connected components $G_i$ that do not contain an odd
  cycle.
\end{proof}

We return to the object of study, namely, the map $\tilde{\varphi}_G$
which sends the $(2m + |E|)$-dimensional set
$\Theta = \mathbb{R}_E \times \diag^+_m \times \mathbb{R}^m$ to the
${m +1 \choose 2}$-dimensional space of symmetric $m \times m$
matrices.  The Jacobian $J(\tilde{\varphi}_G)$ is of size
${m +1 \choose 2} \times (2m + |E|)$, and we index its rows by pairs
$(v,w)$ with $1\le v< w\le m$, whereas in Section~\ref{sec:intro} we
assume the vertex set $V=\{1,\dots,m\}$ to be topologically ordered.
We now describe a particular way of arranging the rows and columns of
$J(\tilde{\varphi}_G)$.

Define the set of ``non-edges'' as
$N := \{ (v, w): v <w \text{ and } (v, w) \not \in E \}$; we will also
write $v \not \rightarrow w$ to express that $(v, w) \in N$.  Also,
define $D := \{(v, v) : v \in V\}$, so that $D \cup E \cup N$ index
all entries in the upper triangular half of an $m \times m$ symmetric
matrix.  The rows of $J(\tilde{\varphi}_G)$ are now arranged in the
order $D$, $E$ and $N$.  The columns of $J(\tilde{\varphi}_G)$ are
indexed such that partial derivatives with respect to the free input
variables in the triple $(\Lambda, \Psi, \gamma)$ appear from left to
right, in the order $\Psi$, $\Lambda$ and $\gamma$.  In other words,
we partition $J(\tilde{\varphi}_G)$ into 9 blocks as follows:
\begin{equation} \label{Jacob}
J(\tilde{\varphi}_G) = \kbordermatrix{\mbox{}&\Psi&\Lambda&\gamma\\
D&\cdots&\cdots&\cdots\\
E&\cdots&\cdots&\cdots\\
N&\cdots&\cdots&\cdots\\
}.\end{equation}

The following lemma is obtained by inspection of the partial
derivatives of $\tilde{\varphi}_G$.  Its proof appears in
Appendix \ref{sec:proofs}.

\begin{lemma} \label{lem:Jacob_structure} The Jacobian matrix
  $J(\tilde{\varphi}_G)$ is generically of full column rank provided
  that the submatrix $[J(\tilde{\varphi}_G)]_{N, \gamma}$ is so.
\end{lemma}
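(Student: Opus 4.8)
The plan is to show that, under the hypothesis that $[J(\tilde{\varphi}_G)]_{N,\gamma}$ has full column rank, the kernel of $J(\tilde{\varphi}_G)$ is trivial; since $\tilde{\varphi}_G$ is polynomial, generic full column rank then follows exactly as in the proof of \lemref{equiv_maps}, and the conclusion is what we want by \lemref{localequalfinite}. First I would record the three families of partial derivatives by inspection, writing $M := I_m - \Lambda$ (which is upper unitriangular): one finds $\partial_{\Psi_{kk}}(\tilde{\varphi}_G)_{vw} = M_{vk}M_{wk}$, $\partial_{\lambda_{ab}}(\tilde{\varphi}_G)_{vw} = -\Psi_{bb}(\delta_{va}M_{wb}+\delta_{wa}M_{vb})$, and $\partial_{\gamma_u}(\tilde{\varphi}_G)_{vw} = -(\delta_{uv}\gamma_w+\delta_{uw}\gamma_v)$. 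A vanishing linear combination of columns, with coefficients $c\in\mathbb{R}^m$ for $\Psi$, $D_\Lambda\in\mathbb{R}_E$ for $\Lambda$, and $e\in\mathbb{R}^m$ for $\gamma$, is then equivalent to the symmetric matrix identity
\[
M C M^T - D_\Lambda\Psi M^T - M\Psi D_\Lambda^T = e\gamma^T + \gamma e^T, \qquad C:=\diag(c).
\]

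Next I would multiply this identity on the left by $M^{-1}$ and on the right by $M^{-T}$. Writing $P := M^{-1}D_\Lambda$ (which is strictly upper triangular), $\tilde e := M^{-1}e$, $\tilde\gamma := M^{-1}\gamma$, and $R := \tilde e\tilde\gamma^T + \tilde\gamma\tilde e^T$, the identity becomes $C - P\Psi - \Psi P^T = R$. Its diagonal reads $C = \diag(R)$, which merely expresses $c$ through $\tilde e$, while its strictly upper triangular part reads $P_{vw} = -R_{vw}/\Psi_{ww}$ for $v<w$, so the whole $\Lambda$-direction $P$ is determined by $\tilde e$. The one remaining requirement is that $P$ arise from a genuine edge direction, i.e.\ that $D_\Lambda = MP$ be supported on $E$; equivalently $(MP)_{vw}=0$ for every $(v,w)\in N$. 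Substituting $P$ turns this into a linear system $B\tilde e=0$ indexed by the non-edges $N$, namely $R_{vw} = \sum_{v<k<w,\,(v,k)\in E}\lambda_{vk}R_{kw}$ for $(v,w)\in N$. If this forces $\tilde e=0$, then $P=0$, hence $D_\Lambda=0$ and $C=0$, and the kernel is trivial.

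The heart of the matter, and the step I expect to be the main obstacle, is showing that $B$ has full column rank $m$ using only that $[J(\tilde{\varphi}_G)]_{N,\gamma}$ does. The difficulty is that $B$ couples the $\Lambda$-direction and the $\gamma$-direction: it agrees with the map $A:\tilde e\mapsto (R_{vw})_{(v,w)\in N}$ up to the correction $\tilde e \mapsto \big(\sum_{v<k<w,\,(v,k)\in E}\lambda_{vk}R_{kw}\big)_{(v,w)\in N}$, a correction that involves only entries $R_{kw}$ with larger first index $k>v$. Here $A$ is, up to sign and the substitution $\gamma\mapsto\tilde\gamma$, exactly the block $[J(\tilde{\varphi}_G)]_{N,\gamma}$, so by \lemref{lem_vicard} it has the same generic rank $m$ under our hypothesis.

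The clean way to control the correction is a specialization argument. The map $A$ does not depend on $\Lambda$, and setting $\Lambda=0$ gives $M=I_m$, whence $B|_{\Lambda=0}=A$. Because $[J(\tilde{\varphi}_G)]_{N,\gamma}$ has full column rank, one may choose $m$ non-edge rows for which the corresponding $m\times m$ minor of $A$ is not the zero polynomial in $\tilde\gamma$. The same minor of $B$ is a polynomial in $(\Lambda,\tilde\gamma)$ whose restriction to $\Lambda=0$ is precisely this nonzero minor of $A$; hence it is not identically zero, and so $B$ has full column rank at a generic point $(\Lambda,\tilde\gamma)$. This forces $\tilde e=0$, and therefore the kernel of $J(\tilde{\varphi}_G)$ is generically trivial, i.e.\ $J(\tilde{\varphi}_G)$ is generically of full column rank.
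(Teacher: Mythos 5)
Your proof is correct, but it takes a genuinely different route from the paper's. The paper works with an explicit square submatrix: it writes out all nine blocks of $J(\tilde{\varphi}_G)$, picks rows $D \cup E \cup N'$ where $N'$ indexes a nonvanishing $m\times m$ minor of $[J(\tilde{\varphi}_G)]_{N,\gamma}$, specializes $\psi_1 = \dots = \psi_m = 1$, and expands the determinant by the Leibniz formula; the permutations pairing $D$-rows with $\Psi$-columns and $E$-rows with $\Lambda$-columns contribute $\pm\det\bigl([J(\tilde{\varphi}_G)]_{N',\gamma}\bigr)$, a nonzero polynomial in the $\gamma$-variables alone, which cannot be cancelled by the remaining permutation terms since each of those involves $\Lambda$-variables. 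You instead analyze the kernel directly, in a Schur-complement spirit: a kernel vector gives the symmetric matrix identity $MCM^T - D_\Lambda\Psi M^T - M\Psi D_\Lambda^T = e\gamma^T + \gamma e^T$, and conjugating by $M^{-1}$ decouples it into diagonal and strictly triangular parts (here the topological ordering, making $\Lambda$ and hence $P = M^{-1}D_\Lambda$ strictly upper triangular, is what you are exploiting), so the $\Psi$- and $\Lambda$-directions are solved for uniquely in terms of the $\gamma$-direction; the only remaining constraint, that $D_\Lambda = MP$ be supported on $E$, becomes the linear system $B\tilde e = 0$ over non-edges, and your specialization $\Lambda = 0$ identifies $B$ with $\mp[J(\tilde{\varphi}_G)]_{N,\gamma}$ (in the variables $\tilde\gamma$), so a nonvanishing minor survives. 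Both proofs thus rest on the same two structural facts — the $(D,\Psi)$ and $(E,\Lambda)$ blocks are ``invertible-like'' and a nonzero polynomial minor certifies generic rank — and both use a specialization trick, the paper's being $\Psi = I_m$ where yours is $\Lambda = 0$; what your version buys is a conceptual explanation of \emph{why} the $(N,\gamma)$ block controls everything (the other directions are eliminated exactly, not just dominated combinatorially), at the cost of the change of variables $\tilde e = M^{-1}e$, $\tilde\gamma = M^{-1}\gamma$, whose harmlessness you should state explicitly (the substitution $\gamma \mapsto M^{-1}\gamma$ is a polynomial bijection of $\mathbb{R}_E\times\mathbb{R}^m$, so genericity in $(\Lambda,\tilde\gamma)$ transfers to genericity in $(\Lambda,\gamma)$). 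Two cosmetic points: your appeal to \lemref{lem_vicard} in the third paragraph is unnecessary — the generic full rank of your matrix $A$ follows from the lemma's hypothesis by mere renaming of variables, and \lemref{lem_vicard} is what the paper uses later to \emph{verify} that hypothesis in the proof of \thmref{our_suff} — and you should note that $\Psi_{ww}>0$ justifies the division defining $P_{vw} = -R_{vw}/\Psi_{ww}$.
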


We now give the proof of \thmref{our_suff}.

\begin{proof}[Proof of \thmref{our_suff}]
By \lemsref{localequalfinite} and \lemssref{Jacob_structure} , it
suffices to show that $[J(\tilde{\varphi}_G)]_{N, \gamma} $ is
generically of full column rank. For each $v \not \rightarrow  w \in
N$,  
\begin{equation} \label{coor_maps}
  \left[\tilde{\varphi}_G (\Lambda, \Psi, \gamma) \right]_{vw}
  =\left[(I_m  - \Lambda) \Psi (I_m  - \Lambda^T)\right]_{vw} -
  \gamma_v \gamma_w. 
\end{equation}
Note that only the right most term in~(\ref{coor_maps}) contributes to
the partial derivatives of $\tilde{\varphi}_G$ with respect to
$\gamma = (\gamma_v)_{v\in \{1, \dots, m\}}$.  

Ignoring the directionality of non-edges in $N$, define the undirected
graph $H = (V, N)$ to which we associate a map $f_H$ as in
\lemref{lem_vicard}.  Then
\[ [J(\tilde{\varphi}_G)]_{N, \gamma} = -J_{f_H}.
\]
But $J_{f_H}$ has generically full column rank by \lemref{lem_vicard}
because, in fact, $H$ is equal to the complementary graph $G^c$ for
which we assume that all connected components contain an odd cycle.
\end{proof}

We remark that \thmref{our_suff} can also be proven by studying the
Jacobian of the map $\tilde{\phi}_G$ from \eqref{phi_tilde}.  We chose
to work with $\tilde{\varphi}_G$ above since this allowed us to avoid
consideration of the inverse of the matrix $I_m - \Lambda$.  For
\thmref{our_nec}, however, we consider both $\tilde{\varphi}_G$ and
$\tilde{\phi}_G$.

\begin{proof}[Proof of \thmref{our_nec}]
  We first prove the necessity of condition $(i)$ by showing that if
  $|E_{con}| - |E| < d_{con}$, then the Jacobian matrix
  $J(\tilde{\varphi}_G)$ always has row rank less than $2m+ |E|$.
  This implies that it cannot be of full column rank which implies the
  failure of generic finite identifiability by
  \lemref{localequalfinite}.

  As in the proof of \thmref{our_suff}, we consider the set of
  non-edges $N$, which we now partition as $N = N_1\dot\cup N_2$,
  where $N_1=\{v \not \rightarrow w \in E : \text{
    $v \text{ --- }w \in E_{con}$}\}$, and $N_2 = N \setminus N_1$.
  Accordingly, we can partition the submatrix
  $[J(\tilde{\varphi}_G)]_{N,\{\Psi, \Lambda, \gamma\}}$ into two
  block of rows indexed by $N_1$ and $N_2 $ as
  \begin{equation}
    \label{eq:our-nec-proof-zeros}
    [J(\tilde{\varphi}_G)]_{N,\{\Psi, \Lambda, \gamma\}}
    =\kbordermatrix{\mbox{}&\Psi&\Lambda&\gamma\\
      N_1&\cdots&\cdots&\cdots\\
      N_2& 0 & 0 &\cdots\\
    }.
  \end{equation}
  To see that the submatrix
  $[J(\tilde{\varphi}_G)]_{N_2, \{ \Psi, \Lambda \}} = 0$, observe
  first that an entry of $(I - \Lambda) \Psi(I - \Lambda^T)$ is the
  zero polynomial if and only if the same is true for
  $\Sigma_{|L}^{-1}$, where $\Sigma_{|L}$ is the matrix
  from~(\ref{eq:sigmaL}).  Second, by definition of $E_{con}$ and
  $N_2$, if $(v,w) \in N_2$ then $(\Sigma_{|L}^{-1})_{vw}= 0$.

  Next, observe that to prove the necessity of condition $(i)$ it
  suffices to show that the rank of
  $[J(\tilde{\varphi}_G)]_{N_2, \gamma} $ cannot be larger than
  $m - d_{con}$.  Indeed, if this is true, then there exists a subset
  $N_2' \subset N_2$ with $|N_2'| = m - d_{con}$, such that the
  submatrix
  \[ 
  [J(\tilde{\varphi}_G)]_{\{D, E, N_1, N_2'\},\{\Psi, \Lambda,
    \gamma\}}
  \]
  has the same rank as the original Jacobian matrix
  $J(\tilde{\varphi}_G)$.  However, the submatrix
  $[J(\tilde{\varphi}_G)]_{\{D, E, N_1, N_2'\},\{\Psi, \Lambda,
    \gamma\}}$
  has $2m+ |E_{con}| - d_{con}$ rows, and thus its rank is less than
  $2m+ |E|$ because under condition $(i)$ we have
  $|E_{con}| - |E| < d_{con}$.  As a result, $J(\tilde{\varphi}_G)$
  cannot be of full column rank.

  It now remains to show that $[J(\tilde{\varphi}_G)]_{N_2, \gamma}$
  has rank at most $m - d_{con}$.  Observe that the undirected graph
  $(V,N_2)$ is equal to the complementary graph $(G_{con})^c$.
  Moreover, $[J(\tilde{\varphi}_G)]_{N_2, \gamma}$ is equal to the
  negative Jacobian of the map $f_{(G_{con})^c}$ that we get by applying the
  construction from \lemref{lem_vicard} to $(G_{con})^c$; recall the
  proof of \thmref{our_suff}.  Applying \lemref{lem_vicard}, we find
  that $[J(\tilde{\varphi}_G)]_{N_2, \gamma}$ has generic rank
  $m - d_{con}$, which is also the maximal rank that
  $[J(\tilde{\varphi}_G)]_{N_2, \gamma}$ may have.

  The proof of $(ii)$ follows the exact same argument as that of
  $(i)$, by replacing \begin{inparaenum}[(a)] \item $G_{con}$ with
    $G_{|L, cov}$, \item $d_{con}$ with $d_{cov}$, \item
    $\tilde{\varphi}_G$ with $\tilde{\phi}_G$, \item$\Psi$ with
    $\Omega$, \item$\gamma$ with $\delta$ and \item
    $J(\tilde{\varphi}_G)$ with $J(\tilde{\phi}_G)$\end{inparaenum},
  where $J(\tilde{\phi}_G)$ is partitioned as
  \begin{equation} \label{Jacob2}
    J(\tilde{\phi}_G) = \kbordermatrix{\mbox{}&\Omega&\Lambda&\delta\\
      D&\cdots&\cdots&\cdots\\
      E&\cdots&\cdots&\cdots\\
      N&\cdots&\cdots&\cdots\\
    },\end{equation}
  similarly to \eqref{Jacob}.
\end{proof}

\section{Algebraic computations and examples}\label{sec:example}

As explained in \citet[\S3]{drton:prague} and \cite{garcia:2010},
identifiability properties of a model such as $\mathcal{N}_*(G)$ can
be decided using Gr\"obner basis techniques from computational
algebraic geometry \citep{coxlittleoshea}.  While these techniques are
tractable only for small to moderate size problems, we were able to
perform an exhaustive algebraic study of all DAGs $G=(V,E)$ with
$m \le 6$ nodes.  Beyond a mere decision on whether the
parametrization map $\phi_G$ is generically 1-to-1, the algebraic
methods also provide information about the generic cardinality of the
fibers of $\phi_G$ as a map defined on complex space.


\begin{definition}
  \label{def:gen-k-1}
  For a DAG $G = (V, E)$, let $\phi_G^\mathbb{C}$ be the map obtained
  by extending $\phi_G$ to the complex domain $\mathbb{C}^{2m+|E|}$.
  If the (complex) fibers of $\phi_G^\mathbb{C}$ are generically of
  cardinality $k$, then we say that $\phi_G^\mathbb{C}$ is generically
  $k$-to-one.
\end{definition}

The language of \defref{gen-k-1} allows us to give a refined
classification of DAGs $G$ in terms of the identifiability properties
of the parametrization of model $\mathcal{N}_*(G)$.  Indeed,
$\mathcal{N}_*(G)$ is generically finitely identifiable if and only if
$\phi_G^\mathbb{C}$ is generically $k$-to-one for some $k < \infty$.

\begin{remark}
  The generic size of the fibers of $\phi_G^\mathbb{C}$ equals the
  generic size of the fibers of the complex extensions of the three
  maps from \lemref{equiv_maps}.  The map $\tilde\varphi_G$ has low
  degree coordinates and tends to be the easiest to work with in
  algebraic computation.  Another approach that can be useful is to
  adapt the algorithm described in Section 8 of the supplementary
  material for \cite{foygel2012}.  To do this note that for
  $\Lambda\in\mathbb{C}^E$ there exist complex choices of $\Omega$ and
  $\delta$ such that $\phi_G(\Lambda,\Omega,\delta)=\Sigma$ if and
  only if $(I-\Lambda^T)\Sigma(I-\Lambda)$ is a matrix that is the sum
  of a diagonal matrix, namely, $\Omega$, and a symmetric matrix of
  rank 1, namely, $\delta\delta^T$.  Whether a matrix is of the latter
  type can be tested using tetrads, that is, $2\times 2$
  subdeterminants involving only off-diagonal entries of the matrix;
  see also~(\ref{tetrad_eq}) below.  The tetrads of a matrix form a
  Gr\"obner basis \citep{Loera,tedandpen}.
\end{remark}

\begin{table}[t]
\centering
\caption{Counts of unlabeled DAGs $G$ with $m$ nodes, at most ${m +1
    \choose 2} - 2m$ edges, and complex parametrization
  $\phi_G^\mathbb{C}$ generically
  $k$-to-one.  Counts are also given for DAGs that satisfy the sufficient
  conditions from Thm.~\thmssref{our_suff} and
  Prop.~\propssref{wermuth2005}, and 
  DAGs that fail to satisfy
  the necessary condition from Thm.~\thmssref{our_nec}.
}
\begin{tabular}{c|r@{\hspace{0.5cm}}r@{\hspace{.5cm}}r}

$m$ & 4 &5& 6\\ 
\hline
\hline
$k < \infty$& 5 & 95& 3344 \\ 
$k = 2$ & 5 & 87& 2961 \\ 

$k = 4$ & 0 & 8& 345 \\ 
$k = 6$& 0 & 0& 24 \\ 

$k = 8$& 0 & 0 &  14 \\[0.1cm]
Prop.~\propssref{wermuth2005} & 5 & 49& 985 \\ 
Thm.~\thmssref{our_suff}& 5 & 88& 2957 \\[0.05cm] 
\hline
$k = \infty$ & 1 & 20 &  552 \\[0.025cm]

Thm.~\thmssref{our_nec} & 1 & 20 &  361 \\[0.05cm] 
\hline  \hline
Total \# of DAGs & 6 & 115 &  3896 \\ 
\end{tabular}
\label{tab:experiment}
\end{table}

\tabref{experiment} lists out the counts of DAGs $G = (V, E)$, with
$4\le m\le 6$ nodes, that have $\phi_G^\mathbb{C}$ generically
$k$-to-one, for all possible values of $k$.  The table also gives the
the counts of DAGs satisfying the conditions in \thmsref{our_suff} and
\thmssref{our_nec} as well as \propref{wermuth2005}.  DAGs with
${m +1\choose 2} - 2m < |E|$, which trivially give generically
$\infty$-to-one maps $\phi_G^{\mathbb{C}}$ in view of
\corref{basicthm}, are excluded.  We emphasize that the counts are
with respect to unlabeled DAGs, that is, all DAGs that are isomorphic
with respect to relabeling of nodes are counted as one unlabeled graph.

In the considered settings the condition in
\thmref{our_suff} is very successful in certifying DAGs with a
generically finitely identifiable model. For instance, when $m = 6$,
it is able to correctly identify $2957$ out of $3344$ such graphs.
The previously known sufficient condition of \cite{wermuth2005}
identifies $985$ of them.  Our necessary condition in \thmref{our_nec}
is also useful in assessing graphs that give generically
infinite-to-one models.  For instance, when $m = 6$, we find that
$361$ of $552$ such graphs violate the
condition; recall the example from \figref{graph7539}. 

While, by \propref{stronger}, our sufficient condition in
\thmref{our_suff} is stronger than that in \propref{wermuth2005} for
generic finite identifiability, the latter condition, due to
\cite{wermuth2005}, in fact implies that $\phi_G^\mathbb{C}$ is
generically $2$-to-one.  For $m=5$, there are $6$ DAGs that satisfy
the condition in \thmref{our_suff} but give generically $4$-to-one
maps $\phi_G^\mathbb{C}$.  The graph from \figref{graph675} is an
example.  We note that for this graph $G$ the fibers of
$\phi_G^\mathbb{C}$ intersect the statistically relevant set $\Theta$
in either 2 or 4 points, and both possibilities do occur.

\section{Subgraph extension} \label{sec:hara}

This section concerns results on how we can extend knowledge about
identifiability of an induced subgraph to that of the original DAG.
We recall standard terminology in graphical modeling.  For a given DAG
$G = (V, E)$, we write $pa(v) = \{w : w \rightarrow v \in E\}$ for the
parent set of the node $v$, and
$ch(v)= \{ w : v \rightarrow w \in E\}$ for the child set of $v$.  If
for some node $s \in V$ there does not exist a node $s'\in V$ with
$s \rightarrow s' \in E$, then $s$ is a sink node.  If there is no
other node $s'\in V$ with $s' \rightarrow s \in E$, then $s$ is a
source node.  The following theorem is the main result of this
section.

\begin{theorem} \label{thm:subgraph_ext}
Given a DAG $G = (V, E)$, if there exists
\begin{enumerate}
\item a sink node $s \in V$ such that $pa(s) \not = V\setminus \{s\}$
  and the model $\mathcal{N}_*(G')$ of the induced subgraph $G'$ on
  $V\setminus \{s\}$ is generically finitely identifiable, or

\item  a source node $s \in V$ such that $ch(s) \not = V\setminus  \{s\}$ and the model $\mathcal{N}_*(G')$ of the induced subgraph $G'$ on $V\setminus \{s\}$ is generically finitely identifiable, 

\end{enumerate}
then the model $\mathcal{N}_*(G)$ is generically finitely identifiable.
\end{theorem}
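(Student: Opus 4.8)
The plan is to establish generic finiteness of the fibers of $\phi_G$ directly, exploiting the block structure that a sink or a source node induces in the covariance (respectively concentration) parametrization, and then to invoke \lemref{localequalfinite}. Crucially, I would work with the seemingly weaker condition $(ii)$ of \lemref{localequalfinite}, which only asks for finiteness of the fibers of the restriction $\phi_G|_{\Theta\setminus\tilde\Theta}$; this is exactly the tool that lets me discard spurious fiber elements lying in the exceptional set, and it is what makes the genericity bookkeeping manageable. For a sink $s$, take it to be the last node $m$ in topological order, so the $m$-th row of $\Lambda$ vanishes. Writing $V'=V\setminus\{s\}$, $A=(I_{m-1}-\Lambda')^{-1}$ and $b=A\lambda_{\cdot,s}$, a direct block computation of $\Sigma=\phi_G(\Lambda,\Omega,\delta)$ shows that the leading principal block is $\Sigma_{V',V'}=A^T(\Omega'+\delta'\delta'^T)A=\phi_{G'}(\Lambda',\Omega',\delta')$, where $(\Lambda',\Omega',\delta')$ are the restrictions of the parameters to $V'$. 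The remaining off-diagonal column and the $(s,s)$ entry of $\Sigma$ then read
\[
\Sigma_{V',s}=\Sigma_{V',V'}\,\lambda_{\cdot,s}+\delta_s\,u,\qquad \Sigma_{ss}=b^T(\Omega'+\delta'\delta'^T)b+2\delta_s\,b^T\delta'+\delta_s^2+\omega_s,
\]
with $u=(I_{m-1}-\Lambda'^T)^{-1}\delta'$. The first relation is a linear system in the unknowns $(\lambda_{w,s})_{w\in pa(s)}$ and $\delta_s$, with coefficient matrix $[\,[\Sigma_{V',V'}]_{\cdot,pa(s)}\mid u\,]$, and the second recovers $\omega_s$ once everything else is fixed.

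The heart of the argument is that this coefficient matrix is generically of full column rank. Since $\Sigma_{V',V'}$ is positive definite, its columns indexed by $pa(s)$ are independent, so full rank can only fail if $u$ lies in their span; unwinding the matrix identities, this is equivalent to $(I_{m-1}-\Lambda')(\Omega'+\delta'\delta'^T)^{-1}\delta'$ lying in the coordinate subspace spanned by $\{e_w:w\in pa(s)\}$. The hypothesis $pa(s)\neq V\setminus\{s\}$ forces $|pa(s)|\le m-2$, so this is a proper subspace and the failure condition cuts out a proper algebraic set; hence full column rank holds generically. I would then assemble the exceptional set $\tilde\Theta=\tilde\Theta_1\cup\tilde\Theta_2$, where $\tilde\Theta_1$ is the preimage under restriction of the exceptional set witnessing generic finite identifiability of $G'$ (proper algebraic, since restriction is a coordinate projection) and $\tilde\Theta_2$ is the rank-drop locus above. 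For a point outside $\tilde\Theta$, its image $\Sigma$ has $\Sigma_{V',V'}$ with only finitely many $\phi_{G'}$-preimages, so every fiber element has one of finitely many restrictions to $V'$; and any two fiber elements outside $\tilde\Theta$ sharing a restriction must coincide, because the full-rank linear system determines $(\lambda_{\cdot,s},\delta_s)$ uniquely and $\omega_s$ is then forced. Thus the fibers of $\phi_G|_{\Theta\setminus\tilde\Theta}$ are finite, and \lemref{localequalfinite} gives generic finite identifiability.

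For a source $s$, taken to be node $1$ so that the first column of $\Lambda$ vanishes, I would run the dual argument on the concentration parametrization $\varphi_G(\Lambda,\Psi,\gamma)=(I_m-\Lambda)(\Psi-\gamma\gamma^T)(I_m-\Lambda^T)$, which by \lemref{equiv_maps} is generically finite-to-one if and only if $\phi_G$ is. Writing $V''=V\setminus\{s\}$, the trailing principal block of $K=\varphi_G(\Lambda,\Psi,\gamma)$ equals $\varphi_{G'}(\Lambda'',\Psi'',\gamma'')$; the off-diagonal block gives a linear system for $(\lambda_{s,v})_{v\in ch(s)}$ and $\gamma_1$ whose coefficient matrix reduces, after removing an invertible left factor, to $[\,{-}[Q'']_{\cdot,ch(s)}\mid{-}\gamma''\,]$ with $Q''=\Psi''-\gamma''\gamma''^T$ positive definite; and the $(1,1)$ entry recovers $\psi_1$. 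The condition $ch(s)\neq V\setminus\{s\}$ plays exactly the role of $pa(s)\neq V\setminus\{s\}$, forcing $|ch(s)|\le m-2$ and hence generic full column rank, and the remaining exceptional-set bookkeeping is identical.

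The main obstacle I anticipate is not the block algebra but the genericity accounting: one must guarantee that the linear system recovering the extra node's parameters has full column rank not merely at the true parameter point but for every fiber element that is retained. It is precisely the use of condition $(ii)$ of \lemref{localequalfinite} --- which permits discarding fiber elements inside $\tilde\Theta$ --- together with the dimension count $|pa(s)|\le m-2$ (respectively $|ch(s)|\le m-2$) coming from the hypotheses, that resolves this cleanly.
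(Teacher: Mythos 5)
Your proof is correct, and it takes a genuinely different route from the paper's. The paper shares your outer scaffolding --- the reduction via \lemref{localequalfinite}(ii), the cylinder construction of the exceptional set over the subgraph's exceptional set, the observation that the principal submatrix of $\Sigma$ equals $\phi_{G_1}$ applied to the restricted parameters, and the switch to $\varphi_G$ via \lemref{equiv_maps} for the source case --- but its mechanism for recovering the extra node's parameters is entirely different: it imposes the tetrad equations $\text{TETRADS}\left((I_m-\Lambda^T)\Sigma_0(I_m-\Lambda)\right)=0$, which are linear in $\lambda_{pa(m),m}$, proves generic full rank of that linear system by specializing $\lambda_{E_1}=0$ and invoking \lemref{sink_and_source} on star graphs (which in turn rests on \thmref{our_suff} and the Spearman/coSpearman characterizations, \thmsref{tetrad_char_Spear} and \thmssref{tetrad_char_coSpear}), and then recovers $\Omega$ and $\delta$ up to sign from the uniqueness of the diagonal and rank-$1$ components of a Spearman matrix. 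You bypass the tetrad/Spearman machinery altogether by noticing that the cross block $\Sigma_{V',s}=\Sigma_{V',V'}\lambda_{\cdot,s}+\delta_s u$ is jointly linear in $(\lambda_{\cdot,s},\delta_s)$ with coefficient matrix determined by the restricted parameters, so each of the finitely many admissible restrictions extends in at most one way. This is more elementary and self-contained (no appeal to \thmref{our_suff}, no tetrads, hence no need for four distinct indices), it identifies the exceptional locus explicitly, and it recovers $\delta_s$ and $\omega_s$ directly rather than through uniqueness of the Spearman decomposition; what the paper's route buys instead is compatibility with its tetrad-based algebraic framework and the independent utility of \lemref{sink_and_source}.

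Two small points you should patch. First, properness of your rank-drop locus does not follow from $|pa(s)|\le m-2$ alone: that only makes the target coordinate subspace proper, and you still need a witness showing the incidence condition is not identically satisfied; e.g.\ at $\Lambda'=0$, $\Omega'=I_{m-1}$ one gets $(I_{m-1}-\Lambda')(\Omega'+\delta'\delta'^T)^{-1}\delta'=\delta'/(1+\|\delta'\|^2)$, which has full support for generic $\delta'$, and similarly in the source case. Second, on the domain $\Theta$ of $\varphi_G$ (positive diagonal $\Psi$, arbitrary $\gamma$) the matrix $Q''=\Psi''-\gamma''\gamma''^T$ is \emph{not} positive definite in general (take $\Psi''=I$ and $\|\gamma''\|>1$); but your argument only needs $Q''$ to be generically nonsingular and $\gamma''$ to lie generically outside the column span of $[Q'']_{\cdot,ch(s)}$, both of which hold and are absorbed by exactly the same rank-drop bookkeeping.
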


Recall that in \tabref{experiment} there are $3344-2957 = 387$ DAGs
with $m=6$ nodes that are generically finitely identifiable but do not
satisfy our sufficient condition from \thmref{our_suff}.  The above
\thmref{subgraph_ext} provides a way to certify identifiability of
models falling within this ``gap'', provided that we have knowledge of
which DAGs on $m =5$ nodes are generically finitely identifiable.  For
instance, from our algebraic computations we know that there are $95 -
88 = 7$ DAGs that are generically finitely identifiable but cannot be
proven to be so by \thmref{our_suff}.  Of the $387$ aforementioned
DAGs on $6$ nodes, $194$ can be proven to be generically finitely
identifiable by using the knowledge about the $7$ graphs on $m=5$
nodes and applying \thmref{subgraph_ext}.  We remark that if a DAG
satisfies the condition in \thmref{our_suff}, the resulting supergraph
obtained by augmenting a sink (source) node that does not have every
other node as its parent (child) must also satisfy the condition in
\thmref{our_suff}.  Hence, given current state-of-the-art,
\thmref{subgraph_ext} is useful primarily as a tool to reduce the
identifiability problem to smaller subgraphs that may then be tackled
by algebraic methods.

%

\thmref{subgraph_ext} is obtained by studying the maps $\phi_G$ and
$\varphi_G$ in \eqref{phi} and \eqref{varphi}. First consider
\eqref{phi}. In light of \lemref{localequalfinite}(ii), we can show
that $\phi_G$ is generically finite-to-one if there exists a proper
algebraic subset $\Xi \subset \mathbb{R}^{2m+|E|}$ such that
$|\mathcal{F}_{\phi_G|_{\Theta \setminus \Xi}} (\theta_0)| < \infty$
for all
$\theta_0 = (\Lambda_0, \Omega_0, \delta_0) \in \Theta \setminus \Xi$,
or equivalently,
\begin{equation}\label{coveqt_tetrad}
(I_m - \Lambda^T)\phi_G( \theta_0) (I_m - \Lambda)=  \Omega + \delta \delta^T,
\end{equation}
has finitely many solutions for $(\Lambda, \Omega, \delta)$ in
$\Theta \setminus \Xi$.  \emph{Throughout} this section, $\Xi$ is
taken so that all points
$(\Lambda, \Omega, \delta)\in\Theta\setminus \Xi$ have
$\delta_i\not=0$ for all $i=1,\dots,m$.  As such, the matrix
$\Omega + \delta \delta^T$ on the right hand side of
\eqref{coveqt_tetrad} has all entries nonzero and is known as a
Spearman matrix.

\begin{definition}\label{def:Spearman}
  A symmetric matrix $\Upsilon\in\mathbb{R}^{m\times m}$ of size $m \geq 3$ is a
  \emph{Spearman matrix} if $\Upsilon = \Omega + \delta \delta^T$ for
  a diagonal matrix $\Omega$ with positive diagonal and a vector
  $\delta$ with no zero elements.   
\end{definition}

Any Spearman matrix $\Upsilon$ is positive definite, and it is not
difficult to show that if $\Upsilon = \Omega + \delta \delta^T$ is
Spearman with $m \geq 3$ then the two summands $\Omega$ and
$\delta \delta^T$ are uniquely determined as rational functions of
$\Upsilon$.  Moreover, $\delta\delta^T$ determines $\delta$ up to sign
change.  For these facts see, for instance, Theorem 5.5 in
\cite{FactBerkeley}.  We term $\Omega$ the \emph{diagonal component}
of $\Upsilon$, and $\delta \delta'$ the \emph{rank-1 component}.  The
following theorem gives an implicit characterization of Spearman
matrices of size $m\ge 4$.

\begin{theorem} \label{thm:tetrad_char_Spear} A positive definite
  symmetric matrix
  $\Upsilon= (\upsilon_{ij})\in\mathbb{R}^{m\times m}$ of size
  $m\ge 4$ is a Spearman matrix if and only if, after sign changes of
  rows and corresponding columns, all its elements are positive and
  such that
\begin{equation} \label{tetrad123}
 \upsilon_{ij} \upsilon_{kl} - \upsilon_{ik} \upsilon_{jl} 
\;=\; \upsilon_{il} \upsilon_{jk} - \upsilon_{ik} \upsilon_{jl}  
 \;=\;  \upsilon_{ij} \upsilon_{kl} - \upsilon_{il} \upsilon_{jk} \;=\; 0  
\end{equation}
for $i < j <k <l$, and
\begin{equation}\upsilon_{ii} \upsilon_{jk} - \upsilon_{ik} \upsilon_{ji} > 0 \label{tetrad4}\end{equation}
for $ i \not= j \not = k$.
\end{theorem}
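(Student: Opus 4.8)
The plan is to prove both directions by reducing everything to the off-diagonal rank-one structure that a Spearman matrix must have. Recall from \defref{Spearman} that writing $\Upsilon=\Omega+\delta\delta^T$ is equivalent to having $\upsilon_{ij}=\delta_i\delta_j$ for $i\ne j$ and $\upsilon_{ii}=\omega_i+\delta_i^2$, with all $\omega_i>0$ and all $\delta_i\ne 0$. So the whole problem splits into recovering this factorization and checking the positivity of the diagonal residuals.

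For the forward implication I would argue directly. Given a Spearman matrix, flipping the sign of the $i$-th row together with the $i$-th column replaces $\delta_i$ by $-\delta_i$ and leaves $\Omega$ untouched; choosing the flips so that every $\delta_i>0$ makes all entries positive. The tetrad equations \eqref{tetrad123} then hold because each of the three products $\upsilon_{ij}\upsilon_{kl}$, $\upsilon_{ik}\upsilon_{jl}$, $\upsilon_{il}\upsilon_{jk}$ equals $\delta_i\delta_j\delta_k\delta_l$, and the inequalities \eqref{tetrad4} hold because $\upsilon_{ii}\upsilon_{jk}-\upsilon_{ik}\upsilon_{ji}=(\omega_i+\delta_i^2)\delta_j\delta_k-\delta_i^2\delta_j\delta_k=\omega_i\delta_j\delta_k>0$.

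For the converse, after performing the sign changes I may assume all entries of $\Upsilon$ are positive and satisfy \eqref{tetrad123}--\eqref{tetrad4}; it suffices to recover $\Omega$ and $\delta$ for this signed matrix, since flipping the signs back conjugates $\Upsilon$ by a diagonal sign matrix $S$ and carries $\Omega+\delta\delta^T$ to $\Omega+(S\delta)(S\delta)^T$, which is again Spearman. The key step is to build the loadings: for each index $i$ I would pick two further indices $j,k$ and set $\delta_i:=\sqrt{\upsilon_{ij}\upsilon_{ik}/\upsilon_{jk}}>0$. The main obstacle, and the only place the hypothesis $m\ge 4$ is used, is showing this value is independent of the auxiliary pair $(j,k)$. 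I expect to prove this by a single-swap argument: replacing $k$ by another index $k'$ leaves the value unchanged precisely because the quadruple $\{i,j,k,k'\}$ satisfies the identity $\upsilon_{ik}\upsilon_{jk'}=\upsilon_{ik'}\upsilon_{jk}$ extracted from \eqref{tetrad123}, and since $m-1\ge 3$ any two admissible pairs are linked by a chain of such swaps. Once $\delta_i$ is well defined, using a common auxiliary index $k$ gives $\delta_i\delta_j=\sqrt{\upsilon_{ij}\upsilon_{ji}}=\upsilon_{ij}$, so the off-diagonal entries factor as required.

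It then remains to set $\omega_i:=\upsilon_{ii}-\delta_i^2$ and verify positivity. Here I would invoke \eqref{tetrad4}: substituting the factorization $\upsilon_{jk}=\delta_j\delta_k$, $\upsilon_{ik}=\delta_i\delta_k$, $\upsilon_{ij}=\delta_i\delta_j$ into $\upsilon_{ii}\upsilon_{jk}-\upsilon_{ik}\upsilon_{ji}>0$ collapses it to $\omega_i\delta_j\delta_k>0$, and since $\delta_j\delta_k>0$ this forces $\omega_i>0$. Hence $\Upsilon=\Omega+\delta\delta^T$ with $\Omega$ diagonal positive and $\delta$ nowhere zero, i.e.\ $\Upsilon$ is Spearman, completing the converse.
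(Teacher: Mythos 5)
Your proof is correct, but it is worth noting that the paper does not actually prove this theorem at all: it simply observes that the statement is essentially Theorem 1 of the cited reference \cite{spearmat}, adjusted to a strict inequality in \eqref{tetrad4} because \defref{Spearman} requires the diagonal component to be strictly positive. Your argument therefore supplies what the paper outsources to the literature: a self-contained, elementary proof. The forward direction is the expected computation ($\upsilon_{ij}\upsilon_{kl}=\upsilon_{ik}\upsilon_{jl}=\upsilon_{il}\upsilon_{jk}=\delta_i\delta_j\delta_k\delta_l$ and $\upsilon_{ii}\upsilon_{jk}-\upsilon_{ik}\upsilon_{ji}=\omega_i\delta_j\delta_k$). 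The substance is the converse, and your construction $\delta_i:=\sqrt{\upsilon_{ij}\upsilon_{ik}/\upsilon_{jk}}$ is the classical factor-analytic route: well-definedness follows because a single swap of an auxiliary index amounts to equality of two of the three matching products on the quadruple $\{i,j,k,k'\}$, which is exactly what \eqref{tetrad123} grants, and any two admissible pairs are linked by such swaps since there are $m-1\ge 3$ indices available; the off-diagonal factorization $\upsilon_{ij}=\delta_i\delta_j$ via a common auxiliary index, the positivity $\omega_i=\upsilon_{ii}-\delta_i^2>0$ extracted from \eqref{tetrad4}, and the reduction to the all-positive case by conjugation with a sign matrix $S$ (which carries $\Omega+\delta\delta^T$ to $\Omega+(S\delta)(S\delta)^T$) are all sound. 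One remark: your converse never invokes positive definiteness, so you in fact prove slightly more than stated — positivity of entries together with \eqref{tetrad123} and \eqref{tetrad4} already forces the Spearman form, and positive definiteness then comes for free.
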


This is essentially the same as Theorem 1 in \cite{spearmat}, which the reader is referred to for a proof. Unlike \cite{spearmat}, we have a strict inequality in \eqref{tetrad4} since in \defref{Spearman} we require the diagonal component of a Spearman matrix to be strictly positive.

The three polynomial expressions in \eqref{tetrad123} are the
$2\times 2$ off-diagonal minors of the matrix $\Upsilon$, which are
also known as tetrads in the literature.  We call the quadruple
$i < j < k <l$ the indices of the tetrad they define.  Note that
\[
\upsilon_{ij} \upsilon_{kl} - \upsilon_{il} \upsilon_{jk} =
\left(\upsilon_{ij} \upsilon_{kl} - \upsilon_{ik} \upsilon_{jl}\right)
-\left(\upsilon_{il} \upsilon_{jk} - \upsilon_{ik}
  \upsilon_{jl}\right)
\]
so that the three tetrads in \eqref{tetrad123} are algebraically
dependent. In general, a symmetric $m \times m$ matrix $\Upsilon$ has
$2 {m \choose 4}$ algebraically independent tetrads and we write
$\text{TETRADS}(\Upsilon)$ to denote a column vector comprising a 
choice of $2 {m \choose 4}$ algebraically independent tetrads.
 
For each triple $(\Lambda, \Omega, \delta) \in \Theta \setminus \Xi$
that solves \eqref{coveqt_tetrad}, it must be true that
\begin{equation} \label{tetrad_eq}
\text{TETRADS} \left((I_m - \Lambda^T)\phi_G( \theta_0) (I_m -
  \Lambda) \right) = 0. 
\end{equation}
Together with the uniqueness of the diagonal and rank-1 components for
a Spearman matrix, if we can show only finitely many $\Lambda$'s solve
the system \eqref{tetrad_eq}, then we have shown that the model
$\mathcal{N}_*(G)$ is generically finitely identifiable. Our proof for
\thmref{subgraph_ext}$(i)$\ follows this approach.

Alternatively, based on \lemref{equiv_maps}, we can also prove generic
finite identifiability by considering the map $\varphi_G$ from
\eqref{varphi}.  We then need to show that there exists a proper algebraic subset
$\Xi\subset\mathbb{R}^{2m+|E|}$ 
so that
$|\mathcal{F}_{\varphi_G|_{\Theta\setminus \Xi}}(\theta_0)| < \infty$
for all
$\theta_0 = (\Lambda_0, \Psi_0, \gamma_0) \in \Theta \setminus \Xi$,
or equivalently,
\begin{equation}\label{coneqt_tetrad}
(I_m - \Lambda)^{-1} \varphi_G\left(\theta_0 \right) (I_m - \Lambda^T)^{-1}   = \Psi - \gamma \gamma^T
\end{equation}
has finitely many solutions for $(\Lambda, \Psi, \gamma)$ in
$\Theta \setminus \Xi$.  Again we assume that $\Xi$ is defined to
avoid issues due to zeros, that is, every triple
$(\Lambda, \Psi, \gamma)\in \Theta \setminus \Xi$ has $\gamma_i\not=0$
for all $i=1,\dots,m$.  We introduce the term coSpearman matrix to
describe the matrix on the right hand side of \eqref{coneqt_tetrad}.

\begin{definition}\label{def:coSpearman}
  A symmetric matrix $\Upsilon\in\mathbb{R}^{m\times m}$ of size $m \geq 3$ is a
  \emph{coSpearman matrix} if  $\Upsilon = \Psi -  \gamma \gamma^T$ for
  a diagonal matrix $\Psi$ with positive diagonal and a vector
  $\gamma$ with no zero elements.
\end{definition} 

Again, the diagonal component $\Psi$ and the rank-1 component
$\gamma \gamma^T$ are uniquely determined by $\Upsilon$; compare
\citet[p.~243]{stang1997}.
The following theorem is analogous to \thmref{tetrad_char_Spear}.

\begin{theorem} 
  \label{thm:tetrad_char_coSpear} A positive definite symmetric matrix
  $\Upsilon= (\upsilon_{ij})\in\mathbb{R}^{m\times m}$ of size
  $m\ge 4$ is a coSpearman matrix if and only if, after sign changes of
  rows and corresponding columns, all its non-diagonal elements are negative and such that 
\begin{equation} \label{cotetrad123}
 \upsilon_{ij} \upsilon_{kl} - \upsilon_{ik} \upsilon_{jl} 
\;=\; \upsilon_{il} \upsilon_{jk} - \upsilon_{ik} \upsilon_{jl}  
 \;=\;  \upsilon_{ij} \upsilon_{kl} - \upsilon_{il} \upsilon_{jk} \;=\; 0  
\end{equation}
for $i < j <k <l$, and
\begin{equation} \upsilon_{ii} \upsilon_{jk} - \upsilon_{ik} \upsilon_{ji} < 0 \label{cotetrad4} \end{equation}
for $ i \not= j \not = k$.
\end{theorem}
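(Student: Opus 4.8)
The plan is to deduce \thmref{tetrad_char_coSpear} from the already-available Spearman characterization \thmref{tetrad_char_Spear} by exploiting the duality between Spearman and coSpearman matrices under matrix inversion. Recall from the proof of \lemref{equiv_maps} the identity $(\Omega+\delta\delta^T)^{-1}=\Psi-\gamma\gamma^T$, valid with $\Psi=\Omega^{-1}$ and $\gamma=(1+\delta^T\Omega^{-1}\delta)^{-1/2}\Omega^{-1}\delta$. Reading this in both directions (and invoking the Sherman--Morrison formula for the reverse map) shows that a positive definite symmetric matrix $\Upsilon$ is coSpearman if and only if $\Upsilon^{-1}$ is Spearman; moreover $\gamma$ has no zero entry exactly when the corresponding $\delta$ has none, since $\Omega$ (resp.\ $\Psi$) is an invertible diagonal matrix. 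Thus it suffices to show that the conditions \eqref{cotetrad123}--\eqref{cotetrad4} placed on $\Upsilon$ are equivalent to the conditions \eqref{tetrad123}--\eqref{tetrad4} placed on $\Upsilon^{-1}$, after which \thmref{tetrad_char_Spear} applied to $\Upsilon^{-1}$ finishes the argument.

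First I would handle the sign changes. Because conjugation by a sign matrix $S=\diag(\pm1)$ commutes with inversion, i.e.\ $(S\Upsilon S)^{-1}=S\Upsilon^{-1}S$, the same row/column sign changes used to normalize $\Upsilon$ may be used for $\Upsilon^{-1}$, so I may assume the normalization is already in force and work with a fixed $\Upsilon$. Next comes the key structural observation, which is the technical heart of the proof: for a symmetric matrix with nonzero off-diagonal entries, the vanishing of all off-diagonal $2\times2$ minors (the tetrads in \eqref{cotetrad123}, equivalently \eqref{tetrad123}) is equivalent to the off-diagonal part having rank one, i.e.\ to the existence of scalars with $\upsilon_{ij}=\sigma\,c_ic_j$ for $i\neq j$ and a fixed sign $\sigma\in\{\pm1\}$ determined by the sign pattern of the entries. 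This is exactly the reconstruction underlying Theorem 1 of \cite{spearmat}; for $m\ge4$ the four-index tetrad relations guarantee that the scalars obtained from any triple of indices are mutually consistent. Writing $\Upsilon=D+\sigma cc^T$ with $D$ diagonal, the Sherman--Morrison formula gives $\Upsilon^{-1}=D^{-1}-\frac{\sigma}{1+\sigma\,c^TD^{-1}c}(D^{-1}c)(D^{-1}c)^T$, whose off-diagonal part is again rank one; hence the tetrads of $\Upsilon^{-1}$ vanish as well, and the two tetrad-vanishing conditions are equivalent.

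The sign passage is then bookkeeping: all off-diagonal entries of $\Upsilon$ being negative (so $\sigma=-1$ and $\Upsilon=D-\gamma\gamma^T$ with $\gamma$ real and nonzero) forces, through the displayed inverse, all entries of $\Upsilon^{-1}$ to be positive; and a direct computation shows $\upsilon_{ii}\upsilon_{jk}-\upsilon_{ik}\upsilon_{ji}=-d_i\gamma_j\gamma_k$, so the strict inequality \eqref{cotetrad4} for $\Upsilon$ is equivalent to $d_i>0$, which is precisely the positivity of the diagonal component $\Psi=D$ (equivalently the Spearman positivity \eqref{tetrad4} for $\Upsilon^{-1}$). The converse inclusion, that a coSpearman $\Upsilon=\Psi-\gamma\gamma^T$ satisfies \eqref{cotetrad123}--\eqref{cotetrad4} after normalizing $\gamma>0$ by sign changes, is the same set of substitutions run in reverse.

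I expect the main obstacle to be the structural observation of the second paragraph: establishing, rather than merely asserting, that vanishing tetrads force a rank-one off-diagonal factorization with a globally consistent choice of scalars, and carefully tracking the sign $\sigma$ and the resulting diagonal component through the inversion. Once that is in place the remaining steps (the sign-matrix commutation, the Sherman--Morrison computation, and the one-line verification of the inequality translation) are routine. As an alternative that avoids inversion altogether, one may argue directly: reduce by sign changes to all off-diagonal entries negative, use the same rank-one reconstruction to write $\Upsilon=\Psi-\gamma\gamma^T$, and read off $\Psi\succ0$ from \eqref{cotetrad4} via the identity $\upsilon_{ii}\upsilon_{jk}-\upsilon_{ik}\upsilon_{ji}=-\psi_{ii}\gamma_j\gamma_k$.
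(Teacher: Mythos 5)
Your proposal is correct, but note first that the paper contains no proof of this theorem at all: it declares the result ``analogous to \thmref{tetrad_char_Spear}'' and defers to Theorem 1 of \cite{spearmat}, so the intended argument is exactly the direct one you sketch in your final paragraph (rank-one reconstruction of the off-diagonal part from \eqref{cotetrad123}, then reading off positivity of the diagonal component from \eqref{cotetrad4} via the identity $\upsilon_{ii}\upsilon_{jk}-\upsilon_{ik}\upsilon_{ji}=-\psi_i\gamma_j\gamma_k$). Your primary route is genuinely different: the duality that a positive definite $\Upsilon$ is coSpearman if and only if $\Upsilon^{-1}$ is Spearman, with Sherman--Morrison in both directions and the sign normalization handled once through $(S\Upsilon S)^{-1}=S\Upsilon^{-1}S$; this is valid (positive definiteness of $\Upsilon$ is exactly what yields $1-\gamma^T\Psi^{-1}\gamma>0$, so the forward inversion is legitimate), and it has the virtue of turning the paper's informal ``analogy'' into an actual reduction to the already-cited Spearman theorem. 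However, your route carries a redundancy you should be aware of: to transfer the tetrad conditions from $\Upsilon$ to $\Upsilon^{-1}$ you must first carry out the rank-one reconstruction $\Upsilon=D-\gamma\gamma^T$ and use \eqref{cotetrad4} to conclude $D\succ 0$ (indeed this is needed merely to make $D$ invertible so Sherman--Morrison applies, so mind the order of steps); but at that moment $\Upsilon$ is already exhibited as coSpearman by \defref{coSpearman}, and the subsequent passage to $\Upsilon^{-1}$ together with the appeal to \thmref{tetrad_char_Spear} proves nothing new. In other words, the inversion detour does not spare you the technical heart --- the globally consistent factorization $\upsilon_{ij}=-\gamma_i\gamma_j$, which in the normalized all-negative setting is elementary (set $\gamma_i^2=-\upsilon_{ij}\upsilon_{ik}/\upsilon_{jk}$ and use the four-index tetrads to see independence of the choice of $j,k$, exactly as in \cite{spearmat}) --- it only repackages the conclusion. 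The clean writeup is your stated ``alternative'': it is shorter, avoids all invertibility bookkeeping, and coincides with what the paper implicitly intends.
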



Using the tetrad characterizations \eqref{cotetrad123} and the
uniqueness of diagonal and rank-1 components, one can now demonstrate
that the restricted map $\varphi_G|_{\Theta\setminus \Xi}$ has finite
fibers by showing that the system of tetrad equations 
\begin{equation}\label{tetrad_eq_con}
\text{TETRADS}\bigl((I_m - \Lambda)^{-1} \varphi_G\left(\theta_0 \right) (I_m - \Lambda^T)^{-1}\bigr) = 0
\end{equation}
admits only finitely many solutions for $\Lambda$ when
$\theta_0 \in \Theta \setminus \Xi$.

The finiteness of solutions in $\Lambda$ for the system
\eqref{tetrad_eq}, or \eqref{tetrad_eq_con}, is a sufficient condition
for the generic finite identifiability of $\mathcal{N}_*(G)$. It is,
however, not obvious that these two systems necessarily have finitely
many solutions when $\mathcal{N}_*(G)$ is generically finitely
identifiable.  The following lemma states that such a converse does
hold for the following two types of DAGs, whose generic finite
identifiability can be easily checked by \thmref{our_suff}. Recall that the notation $``\subsetneq"$ means ``being a proper subset of".

\begin{lemma}\label{lem:sink_and_source}
Let $G = (V, E)$ be a DAG with vertex set $V=\{1,\dots,m\}$.
\begin{enumerate}
\item If $E \subsetneq \{(k, m) : k\le m-1\}$, then there exists
  a proper algebraic subset $\Xi$ such that for all
  $\theta_0 = (\Lambda_0, \Omega_0, \delta_0) \in \Theta \setminus
  \Xi$, the system
\begin{equation*} 
\text{TETRADS} \left((I_m - \Lambda^T)\phi_G\left( \theta_0\right) (I_m - \Lambda) \right) = 0 
\end{equation*}
is linear in the variable $\Lambda\in\mathbb{R}_E$ and is solved
uniquely by $\Lambda=\Lambda_0$.
\item If $E \subsetneq \{(1,k) : k\ge 2\}$, then there exists
  a proper algebraic subset $\Xi$ such that for all
  $\theta_0 = (\Lambda_0, \Psi_0, \gamma_0) \in \Theta \setminus
  \Xi$, the system
\begin{equation*} 
\text{TETRADS} \left((I_m - \Lambda)^{-1}\varphi_G\left(
    \theta_0\right) (I_m - \Lambda^T)^{-1} \right) = 0 
\end{equation*}
is linear in the variable $\Lambda\in\mathbb{R}_E$ and is solved
uniquely by $\Lambda=\Lambda_0$.
\end{enumerate}
\end{lemma}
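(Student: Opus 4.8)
The plan is to establish part (i) directly and to obtain part (ii) by the symmetric argument, interchanging the sink node $m$ with the source node $1$, replacing $\phi_G$ by $\varphi_G$, and replacing the Spearman/tetrad data of \thmref{tetrad_char_Spear} by the coSpearman/cotetrad data of \thmref{tetrad_char_coSpear}. I focus on (i). First I would exploit the hypothesis $E\subsetneq\{(k,m):k\le m-1\}$: all edges point into the sink $m$, so any $\Lambda\in\mathbb{R}_E$ has nonzero entries only in its $m$-th column, whence $I_m-\Lambda^T$ differs from the identity only in its last row and $I_m-\Lambda$ only in its last column. Writing $M:=\phi_G(\theta_0)$, a direct computation then shows that $\Upsilon:=(I_m-\Lambda^T)M(I_m-\Lambda)$ has its leading $(m-1)\times(m-1)$ block equal to $M$ independently of $\Lambda$, while each off-diagonal entry $\Upsilon_{im}$ with $i<m$ is an affine-linear function of the unknowns $\lambda_{km}$, namely $\Upsilon_{im}=M_{im}-\sum_{(k,m)\in E}\lambda_{km}M_{ik}$.

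Next I would identify the leading block of $M$. Evaluating the transformation at $\Lambda=\Lambda_0$ gives $(I_m-\Lambda_0^T)M(I_m-\Lambda_0)=\Omega_0+\delta_0\delta_0^T$, and since the transformation fixes the leading block we get $M_{ij}=\delta_{0,i}\delta_{0,j}$ for all $i\ne j$ with $i,j<m$, and $M_{kk}=(\Omega_0)_{kk}+\delta_{0,k}^2$. With this in hand the tetrad system splits. Every tetrad whose four indices lie in $\{1,\dots,m-1\}$ is a constant equal to a tetrad of the rank-one matrix $(\delta_{0,i}\delta_{0,j})$ and so vanishes identically; every tetrad involving the index $m$ contains exactly one factor drawn from the last row/column and is therefore affine-linear in $\lambda$. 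This already proves the linearity claim. Dividing such a tetrad by the generically nonzero entries $\delta_{0,\bullet}$ (for which a third index $k<m$ is available since $m\ge 4$) shows it is equivalent to an equality of ratios $\Upsilon_{im}/\delta_{0,i}=\Upsilon_{jm}/\delta_{0,j}$, so the whole system is equivalent to requiring $\Upsilon_{im}/\delta_{0,i}$ to be constant in $i$.

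The heart of the argument, and the step I expect to be the main obstacle, is uniqueness. Introducing the common ratio as an auxiliary scalar $c$, the system becomes the affine system $\sum_{(k,m)\in E}\lambda_{km}M_{ik}+c\,\delta_{0,i}=M_{im}$ for $i=1,\dots,m-1$, whose coefficient matrix $A$ is of size $(m-1)\times(|E|+1)$. Using $M_{ik}=\delta_{0,i}\delta_{0,k}+(\Omega_0)_{kk}[i=k]$ one sees that the column of $A$ attached to $(k,m)$ equals $\delta_{0,k}\delta_0'+(\Omega_0)_{kk}e_k$, where $\delta_0'=(\delta_{0,1},\dots,\delta_{0,m-1})$, while the column attached to $c$ equals $\delta_0'$. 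Because $E$ is a \emph{proper} subset there is a coordinate $k^\ast<m$ with $(k^\ast,m)\notin E$; generically $\delta_{0,k^\ast}\ne 0$, so $\delta_0'$ is linearly independent of $\{e_k:(k,m)\in E\}$. Expressing the columns of $A$ in the basis $\{e_k:(k,m)\in E\}\cup\{\delta_0'\}$ then produces a block-triangular change-of-basis matrix with diagonal block $\operatorname{diag}((\Omega_0)_{kk})$ and a trailing $1$, of determinant $\prod_{(k,m)\in E}(\Omega_0)_{kk}\ne 0$. Hence $A$ has full column rank $|E|+1\le m-1$, the affine system has a unique solution, and that solution is forced to be $(\Lambda_0,c_0)$ with $c_0=\delta_{0,m}$. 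I would take $\Xi$ to be the proper algebraic set on which some $\delta_{0,i}$ vanishes, together with the vanishing locus of the relevant maximal minor of $A$, outside of which all of the above holds. It is precisely the proper-subset hypothesis that supplies the free coordinate $k^\ast$: were $E$ the full in-edge set, $\delta_0'$ would lie in $\operatorname{span}\{e_k\}$, $A$ would drop rank, and uniqueness would fail.

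For part (ii) I would note that when $1$ is a source with edges only out of $1$, the matrix $\Lambda$ occupies only its first row, so $\Lambda^2=0$ and $(I_m-\Lambda)^{-1}=I_m+\Lambda$ is itself affine-linear. The map in (ii) then becomes $(I_m+\Lambda)\varphi_G(\theta_0)(I_m+\Lambda^T)$, which has exactly the structure of (i) with the roles of index $m$ and index $1$ interchanged and the trailing $(m-1)\times(m-1)$ block on the indices $2,\dots,m$ held fixed; evaluating at $\Lambda_0$ recovers the coSpearman matrix $\Psi_0-\gamma_0\gamma_0^T$ via $(I_m+\Lambda_0)(I_m-\Lambda_0)=I_m$. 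Running the same split-and-rank argument with the cotetrad characterization of \thmref{tetrad_char_coSpear} and the rank-one piece $-\gamma_0\gamma_0^T$ then completes the proof.
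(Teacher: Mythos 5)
Your proposal is correct, and its crucial step differs genuinely from the paper's. Both proofs establish linearity the same way: all edges point into the sink $m$ (resp.\ out of the source $1$), so the transformed matrix has a constant leading (resp.\ trailing) block equal to the Spearman (resp.\ coSpearman) block of $\Omega_0+\delta_0\delta_0^T$ (resp.\ $\Psi_0-\gamma_0\gamma_0^T$), and every tetrad is either identically zero or affine-linear in $\Lambda$. The divergence is in uniqueness. The paper argues by contradiction: it first invokes \thmref{our_suff} to conclude that $\mathcal{N}_*(G)$ is generically finitely identifiable (properness of $E$ is what makes $G^c$ satisfy the odd-cycle condition), and then shows that if the consistent linear system had a positive-dimensional affine solution set, a continuity/open-ball argument around $\Lambda_0$ combined with the strict inequalities in \thmref{tetrad_char_Spear} would produce infinitely many Spearman matrices $(I_m-\Lambda^T)\Sigma_0(I_m-\Lambda)$, hence an infinite fiber of $\phi_G$ --- a contradiction. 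You instead prove uniqueness head-on: after dividing out the generically nonzero $\delta_{0,i}$, the tetrad system becomes the augmented affine system $\sum_{(k,m)\in E}\lambda_{km}M_{ik}+c\,\delta_{0,i}=M_{im}$, whose coefficient matrix you show has full column rank $|E|+1\le m-1$ by exhibiting its columns in the independent set $\{e_k:(k,m)\in E\}\cup\{\delta_0'\}$; the proper-subset hypothesis enters exactly as the free coordinate $k^\ast$ making $\delta_0'$ independent of the $e_k$. Your route buys self-containedness and transparency: it needs neither \thmref{our_suff} nor the inequality part of \thmref{tetrad_char_Spear}, it isolates precisely where $E\subsetneq\{(k,m):k\le m-1\}$ is used, and it even produces the explicit solution $(\Lambda_0,\delta_{0,m})$; in effect it reproves identifiability of these star graphs from scratch. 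The paper's route buys brevity given machinery already in place, at the cost of being indirect and of tying the lemma to two earlier theorems. One small tidy-up on your side: to get a single algebraic exceptional set $\Xi$, take the locus where all maximal minors of $A$ vanish (rather than ``the relevant'' one, which may vary with $\theta_0$), unioned with the coordinate hyperplanes $\{\delta_i=0\}$; your rank argument shows this is a proper algebraic subset.
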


The proof of \lemref{sink_and_source} is deferred to Appendix \ref{sec:proofs}.
 
\begin{proof}[Proof of \thmref{subgraph_ext}]
  We will first prove $(i)$, which uses \lemref{sink_and_source}$(i)$. The
  proof of $(ii)$ will follow from similar reasoning using
  \lemref{sink_and_source}$(ii)$.

  Without loss of generality, assume that the sink node $s = m$, by
  giving the nodes a new topological order if necessary.  Define two
  DAGs as follows.  First, let $G_1 = (V_1, E_1)$ be the subgraph of
  $G$ induced by the set
  $V_1 = V\setminus \{m\} = [m-1]$, where we adopt
  the shorthand $[k] := \{1, \dots,k\}$, $k\in\mathbb{N}$.  Second,
  let $G_2 = (V, E\setminus E_1)$ be the graph on $V$ obtained from
  $G$ by removing all edges that do not have the sink node $m$ as
  their head. As before, let
  $\Theta := \mathbb{R}_{E} \times \diag^+_{m} \times \mathbb{R}^{m}$.
  We will construct a proper algebraic subset $\Xi$, such that for any
  $\theta \in\Theta\setminus \Xi$, the fiber
  $\mathcal{F}_{\phi_G|_{\Theta \setminus \Xi}}(\theta)$ is finite.
  Then \lemref{localequalfinite}$(ii)$ applies and yields the
  assertion of \thmref{subgraph_ext}$(i)$.

  Let
  $\Theta_1 := \mathbb{R}_{E_1} \times \diag^+_{m-1} \times
  \mathbb{R}^{m-1}$,
  the open set on which the parametrization $\phi_{G_1}$ of model
  $\mathcal{N}_*(G_1)$ is defined.  By assumption, there exists a
  proper algebraic subset $\Xi_1'\subset\mathbb{R}^{2(m-1)+|E_1|}$
  such that the restricted map
  $\phi_{G_1}|_{\Theta_1 \setminus \Xi_1'}$ has finite fibers, by \lemref{localequalfinite}$(ii)$.  Extend $\Xi_1'$ to a proper
  algebraic subset of $\mathbb{R}^{2m+|E|}$ by defining
  \[
  \Xi_1  :=  \Xi_1' \times \mathbb{R}^{E \setminus E_1}
  \times \mathbb{R}^2,
  \]
  where $\mathbb{R}^{E \setminus E_1}$ accommodates the additional
  free variables $\lambda_{vm}$ with $v \in pa(m)$, and $\mathbb{R}^2$
  accommodates the two variables $\Omega_{mm}=\omega_m$ and
  $\delta_m$.

  Next, recall that for a given point
  $\theta' = (\Lambda', \Omega', \delta') \in \Theta$, any
  $(\Lambda, \Omega, \delta) \in \mathcal{F}_{\phi_G} (\theta')$ must
  satisfy the tetrad equations
  \begin{equation} \label{tetrad_sys_theta}
    \text{TETRADS} \left((I_m - \Lambda^T)\phi_G( \theta') (I_m -
      \Lambda) \right) = 0.
  \end{equation}
  Let $\lambda_{E_1} := (\lambda_{vw})^T_{(v, w) \in E_1}$.  Then
  any tetrad in~(\ref{tetrad_sys_theta}) with indices $i < j < k <m$
  has the form
  \[
  \sum_{m' \in pa(m)} a_{m'}\left( \lambda_{E_1}, \phi_G (\theta')\right)\lambda_{m'm} - b\left( \lambda_{E_1}, \phi_G (\theta')\right),
  \]
  where the $a_{m'}$ as well as $b$ are polynomials with the entries
  of $\lambda_{E_1}$ and the entries of a symmetric $m\times m$ matrix
  being their variables.  Let $\lambda_{pa(m), m}$ be the vector with
  entries $\lambda_{vm}$ for $v \in pa(m)$.  Then the part of the
  system \eqref{tetrad_sys_theta} involving the variables
  $\lambda_{v, m}$, $v\in pa(m)$, has the form
  \begin{equation} \label{linearsys}
    C\left(\lambda_{E_1}, \phi_G (\theta')  \right)\lambda_{pa(m), m}
    = c\left(\lambda_{E_1}, \phi_G (\theta')  \right), 
  \end{equation}
  where $C$ is a matrix of size $2 {m-1 \choose 3} \times |pa(m)|$,
  and $c$ is a vector of length $2 {m-1 \choose 3}$.  Both $C$ and $c$
  are filled with polynomials in the entries of $\lambda_{E_1}$ and a
  symmetric $m\times m$ matrix.  Since
  $(\Lambda, \Omega, \delta) \in \mathcal{F}_{\phi_G}(\theta')$, we
  have $\phi_G(\theta')=\phi_G(\Lambda, \Omega, \delta)$ and, thus,
  \begin{equation}  \label{linearsys2}
    C\left(\lambda_{E_1},\phi_G(\Lambda, \Omega, \delta)  \right)\lambda_{pa(m), m} = c\left(\lambda_{E_1}, \phi_G(\Lambda, \Omega, \delta)  \right).
  \end{equation}
  As $\theta'$ was an arbitrary point in $\Theta$, \eqref{linearsys2}
  holds for all $(\Lambda, \Omega, \delta) \in \Theta$.  We claim that
  $C\left(\lambda_{E_1},\phi_G(\Lambda, \Omega, \delta) \right)$ is of
  full rank for generic choices of $(\Lambda, \Omega, \delta) $.  To
  see this note that if $\lambda_{E_1}$ is set to $0$, then
  \eqref{linearsys2} becomes the system of tetrad equations for the
  graph $G_2$.  Using \lemref{sink_and_source}(i) and the assumption
  that $pa(m) \subsetneq V\setminus \{s\}$, we see that
  $C\left(\lambda_{E_1},\phi_G(\Lambda, \Omega, \delta) \right)$
  achieves full rank for $\lambda_{E_1}=0$ and a generic choice of
  $(\lambda_{pa(m), m}, \Omega, \delta )$.  We deduce that the rank
  is full generically.

  Let $\Xi_2$ be a proper algebraic subset such that
  $C\left(\lambda_{E_1},\phi_G(\Lambda, \Omega, \delta) \right)$ is of
  full rank for any
  $(\Lambda, \Omega, \delta) \in\Theta \setminus \Xi_2$.  Let $\Xi_3$
  be the (algebraic) set comprising all triples
  $(\Lambda, \Omega, \delta)$ with at least one coordinate
  $\delta_i=0$, and define $\Xi := \Xi_1 \cup \Xi_2 \cup \Xi_3$.
  Clearly, $\Xi$ is a proper algebraic subset of
  $\mathbb{R}^{2m+|E|}$.  Take $(\Lambda_0, \Omega_0, \delta_0)$ to be
  a point in $\Theta \setminus \Xi$ and define
  $\Sigma_0 := \phi_G(\Lambda_0, \Omega_0, \delta_0)$.  It remains to
  show that the  equation system
  \begin{equation} 
    \label{inpfsys} 
    \Sigma_0 \;=\; \phi_G(\Lambda, \Omega,
    \delta) = (I_m - \Lambda^T)^{-1}(\Omega + \delta \delta^T)(I_m -
    \Lambda)^{-1}
  \end{equation}
  has only finitely many solutions in $(\Lambda, \Omega, \delta)$ over
  the set $\Theta \setminus \Xi$.  

  We begin by observing that because $s=m$ is a sink node, by taking a
  submatrices in~(\ref{inpfsys}), we obtain the equation system
  \begin{align*} 
    ({\Sigma_0})_{[m-1]} 
    &= [(I_m - \Lambda^T)^{-1}(\Omega +
      \delta \delta^T)(I_m - \Lambda)^{-1}]_{ [m-1]} \\ 
    &= (I_{m-1} - {\Lambda}_{[m-1]}^T)^{-1}(\Omega_{[m-1]} +
      \delta_{[m-1]}\delta_{[m-1]}^T)(I_{m-1} - {\Lambda}_{[m-1]})^{-1}
    \\
    &= \phi_{G_1}\left( \Lambda_{[m-1]},
      \Omega_{[m-1]}, \delta_{[m-1]}\right). 
  \end{align*}
  Here, for an index set $W \subset [m]$, we write $x_W$ to denote the
  subvector $x_W=(x_v:v\in W)$ of vector $x= (x_1, \dots, x_m)^T$, and
  we similarly write $A_W$ for the $W\times W$ principal submatrix of
  a matrix $A$.  Let $\mathcal{S}\subset\Theta_1$ be the projection of
  the set of all triples $(\Lambda,\Omega,\delta)\in\Theta \setminus
  \Xi$ that solve~(\ref{inpfsys}) onto their triple of
  submatrices/subvector $(\Lambda_{[m-1]}, \Omega_{[m-1]},
  \delta_{[m-1]})$.  By choice of $\Xi$, we have that
  $\mathcal{S}\subset\Theta_1 \setminus \Xi_1'$ and, since
  $\phi_{G_1}|_{\Theta_1 \setminus \Xi_1'}$ has finite fibers, we know
  that $\mathcal{S}$ is finite.  However, a triple $(\Lambda_{[m-1]},
  \Omega_{[m-1]}, \delta_{[m-1]})\in\mathcal{S}$ determines the matrix
  $C$ and the vector $c$ in~(\ref{linearsys2}) and, by choice of
  $\Xi$, we may deduce that $\lambda_{pa(m), m}$ is uniquely
  determined by $(\Lambda_{[m-1]}, \Omega_{[m-1]}, \delta_{[m-1]})$.
  It follows that the solutions to~(\ref{inpfsys}) that are in
  $\Theta\setminus\Xi$ have their $\Lambda$ part equal to one of
  $|\mathcal{S}|/2$ many choices; recall that if $(\Lambda_{[m-1]},
  \Omega_{[m-1]}, \delta_{[m-1]})$ is in $\mathcal{S}$ then so is
  $(\Lambda_{[m-1]}, \Omega_{[m-1]},- \delta_{[m-1]})$.
  The proof is now complete because $\Lambda$ determines the Spearman
  matrix
  \[ 
  (I_m - \Lambda^T) \Sigma_0 (I_m - \Lambda ) = \Omega + \delta
  \delta^T,
  \]
  for which the diagonal component $\Omega$ and the rank-1 component
  $\delta \delta^T$ are uniquely determined.  Given the fact that
  $\delta \delta^T$ determines $\delta$ only up to sign,
  (\ref{inpfsys}) has $|\mathcal{S}|<\infty$ solutions over
  $\Theta\setminus\Xi$, which concludes the proof of $(i)$.

  \medskip
  The proof of $(ii)$ is analogous, and we only give a sketch.
  Instead of considering $\phi_G$ we turn to $\varphi_G$, which also
  has domain $\Theta$.  Without loss of generality, we let the source
  node be $s = 1$.  We then define $G_1 = (V_1, E_1)$ to be the
  subgraph of $G$ that is induced by $V_1 = \{2, \dots, m\}$, and we
  let $G_2 = (V, E \setminus E_1)$.  We consider the parametrization
  $\varphi_{G_1}$ with domain
  $\Theta_1 = \mathbb{R}_{E_1} \times \diag^+_{m-1} \times
  \mathbb{R}^{m-1}$.
  By assumption, $\mathcal{N}_*(G_1)$ is generically finitely
  identifiable, so there exists a proper algebraic subset $\Xi_1'$
  such that $\varphi_{G_1}|_{\Theta\setminus \Xi_1'}$ has finite fibers,
  by \lemref{localequalfinite}$(ii)$.

  On the other hand, for any $(\Lambda, \Psi, \gamma) \in \Theta$, we have
  \[
  \text{TETRADS}\left( (I_m - \Lambda)^{-1} \varphi_G  (\Lambda,
    \Psi, \gamma)  (I_m - \Lambda^T)^{-1}\right) = 0.
  \]
  Let $\lambda_{E_1} := (\lambda_{vw})^T_{(v, w) \in E_1}$ and
  $\lambda_{1, ch(1)} := (\lambda_{1v})^T_{v \in ch(1)}$.  Then the
  tetrad equations with one index equal to $s=1$ yield the equation system
  \begin{equation*} 
    \label{res_ted_sys2}
    C\left(\lambda_{E_1},\varphi_G  (\Lambda, \Psi, \gamma) \right) \lambda_{1, ch(1)}= c\left(\lambda_{E_1}, \varphi_G \left( \Lambda, \Psi, \gamma \right) \right),
  \end{equation*}
  where part $(ii)$ of \lemref{sink_and_source} can be applied to show
  that
  $C\left(\lambda_{E_1},\varphi_G (\Lambda, \Psi, \gamma) \right)$ is
  of full rank outside some proper algebraic subset $\Xi_2$.  We may
  then define a set $\Xi$ as in the proof of part $(i)$ and use
  arguments similar to the ones above for a proof of part $(ii)$ of
  our theorem.
\end{proof}

\section{Discussion} \label{sec:discussion}


In this paper we studied identifiability of directed Gaussian
graphical models with one latent variable that is a common cause of
all observed variables.  To our knowledge, the best criteria to decide
on identifiability of such models are those given by
\cite{wermuth2005} who consider a more general setup of Gaussian
graphical models with one latent variable.  Their results provide a
sufficient condition for the strictest notion of identifiability that
is meaningful is this context, namely, whether the parametrization map
is generically 2-to-one.  Recall that the coefficients associated with
the edges pointing from the latent variable to the observables can
only be recovered up to a common sign change.

In our work, we take a different approach and study the Jacobian
matrix of the parametrization, which leads to graphical criteria to
check whether the parametrization is finite-to-one.  Our sufficient
condition covers all graphs that can be shown to have a 2-to-one
parametrization by the conditions of \cite{wermuth2005}.  However, our
sufficient condition, which is stated as \thmref{our_suff}, covers far
more graphs as was shown in the computational experiments in
Section~\ref{sec:example}.  Our \thmref{our_nec} describes a
complementary necessary condition.

By studying tetrad equations, we also give a criterion that allows one
to deduce identifiability of certain graphs from identifiability of
subgraphs (\thmref{subgraph_ext}).  This result is stated for generic
finite identifiability but as is clear from the proof, the result
would also confirm that the parametrization of a graph is generically
2-to-one provided the involved subgraph has a generically 2-to-one
parametrization.

The extension result from \thmref{subgraph_ext} can be used in
conjunction with the results obtained by the algebraic computations in
Section~\ref{sec:example}.  These computations solve the
identifiability problem for graphs with up to 6 nodes.  In particular,
we confirm that the sufficient conditions of \cite{wermuth2005} are
not necessary for the parametrization map to be generically 2-to-one
and provide examples of graphs that yield a generically finite but not
2-to-one parametrization.

As mentioned above, we studied models with one latent source $0$ that
is connected to all nodes that represent observed variables.  However,
the graphical criteria in \thmsref{our_suff} and \thmssref{our_nec}
can be readily extended to models with some of these factor loading
edges missing.  Given the previously used notation, we describe such
models as follows.  Let $G=(V,E)$ be a DAG with vertex set of size
$m=|V|$; these vertices index the observed variables.  Let
$V'\subset V$ be the nodes representing observed variables that do not
directly depend on the latent variable.  Then only the edges $0\to v$
with $v\in V\setminus V'$ are added when forming the extended DAG
$\overline{G}$.  The parametrization of the Gaussian graphical model
determined by $G$ and $V'$ is the restriction of $\phi_G$
from~(\ref{phi}) to the domain
\[
\Theta(V') := \left\{ (\Lambda,\Omega,\delta)\in \Theta: \delta_v=0
  \text{ 
  for all } v\in V' \right\}.
\]
When the parametrization maps $\tilde{\phi}_G$, ${\varphi}_G$ and
$\tilde{\varphi}_G$ are restricted to the same domain, the assertion
of \lemref{equiv_maps} still holds.  The corresponding identifiability
results, which are in the spirit of Corollary $1$ in
\cite{Grzebyk2004}, are stated below.  A brief outline of their proofs is
given in Appendix \ref{sec:proofs}.

\begin{theorem} [Sufficient condition]
  \label{thm:our_suff_ex} 
  Let $G= (V,E)$ be a DAG, and let $V'\subset V$.  If every connected
  component of $(G^c)_{V \setminus V'}$, the subgraph of $G^c$ induced
  by $V\setminus V'$, contains an odd cycle, then the parametrization
  map $\phi_G$ is generically finite-to-one when restricted to the
  domain $\Theta(V')$.
\end{theorem}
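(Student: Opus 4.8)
The plan is to mirror the proof of \thmref{our_suff}, replacing the global Jacobian analysis by one adapted to the restricted domain. Since the excerpt grants that \lemref{equiv_maps} continues to hold when all four parametrizations are restricted to the common domain, it suffices to show that $\tilde\varphi_G$, restricted to $\Theta(V') = \{(\Lambda,\Psi,\gamma)\in\Theta : \gamma_v = 0 \text{ for } v\in V'\}$, is generically finite-to-one. By \lemref{localequalfinite} this amounts to exhibiting a single point of $\Theta(V')$ at which the Jacobian of $\tilde\varphi_G|_{\Theta(V')}$ has full column rank; full column rank then holds generically, since the locus where the rank drops is a proper algebraic set. Note that this restricted Jacobian is obtained from $J(\tilde\varphi_G)$ by deleting the columns indexed by $\{\gamma_v : v\in V'\}$ and evaluating on $\Theta(V')$; crucially, the $\Psi$- and $\Lambda$-columns are unaffected, because $-\gamma\gamma^T$ is the only term in $\tilde\varphi_G$ that involves $\gamma$.

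I would carry out the computation at the convenient point $(\Lambda_0,\Psi_0,\gamma_0) = (0,\Psi_0,\gamma_0)$, with $\Psi_0$ a positive diagonal matrix and $\gamma_0$ a generic vector subject to $\gamma_{0,v}=0$ for $v\in V'$. At $\Lambda_0 = 0$ the differential of $\tilde\varphi_G$ in a tangent direction $(\dot\Lambda,\dot\Psi,\dot\gamma)$ with $\dot\gamma_v = 0$ for $v\in V'$ equals $-\dot\Lambda\Psi_0 - \Psi_0\dot\Lambda^T + \dot\Psi - \dot\gamma\gamma_0^T - \gamma_0\dot\gamma^T$, and because $\Psi_0$ is diagonal while $\dot\Lambda\in\mathbb{R}_E$ is strictly upper triangular, this expression has a clean entrywise form. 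Setting it to zero, the diagonal equations determine $\dot\Psi$ from $\dot\gamma$, the equations indexed by $E$ determine $\dot\Lambda$ from $\dot\gamma$, and the equations indexed by the non-edges $N$ reduce to $\dot\gamma_v\gamma_{0,w}+\gamma_{0,v}\dot\gamma_w=0$ for all $(v,w)\in N$. These last equations say precisely that $\dot\gamma$ satisfies the kernel condition for the Jacobian $J_{f_H}$ of \lemref{lem_vicard} with $H=G^c$; this is exactly the mechanism behind \lemref{Jacob_structure}, made explicit at the point $\Lambda_0=0$, and in the unrestricted setting it would recover \thmref{our_suff}.

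The restriction $\gamma_{0,v}=\dot\gamma_v=0$ for $v\in V'$ now enters decisively. Among the non-edge equations $\dot\gamma_v\gamma_{0,w}+\gamma_{0,v}\dot\gamma_w=0$, every one involving an index in $V'$ holds trivially, so only the non-edges $(v,w)$ with $v,w\in V\setminus V'$ impose constraints. These non-edges are exactly the edges of $(G^c)_{V\setminus V'}$, and the surviving equations express that $\dot\gamma_{V\setminus V'}$ lies in the kernel of the Jacobian of $f_{(G^c)_{V\setminus V'}}$, obtained by applying the construction of \lemref{lem_vicard} to the graph $(G^c)_{V\setminus V'}$ on the node set $V\setminus V'$. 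By \lemref{lem_vicard}, that kernel has dimension equal to the number $d'$ of connected components of $(G^c)_{V\setminus V'}$ containing no odd cycle. Under the hypothesis every such component contains an odd cycle, so $d'=0$; hence $\dot\gamma_{V\setminus V'}=0$, and therefore $\dot\gamma=0$, which in turn forces $\dot\Psi=0$ and $\dot\Lambda=0$. Thus the differential is injective at the chosen point, which is what was needed.

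The step I expect to be the main obstacle is the reduction furnished by the restricted form of \lemref{equiv_maps}. The diffeomorphisms $g$ and $h$ used to prove \lemref{equiv_maps} act on the third coordinate through $(I-\Lambda^T)^{-1}$ and $(I-\Lambda)$, and so do not carry the coordinate subspace $\Theta(V')$ to itself; consequently the equivalence of $\phi_G|_{\Theta(V')}$ with $\tilde\varphi_G|_{\Theta(V')}$ is not a formal consequence of the unrestricted statement and must be argued with care. This is precisely why I prefer to work with $\tilde\varphi_G$: its $-\gamma\gamma^T$ term keeps the $\gamma$-derivatives linear and diagonal-free, which is what makes the elimination of $\dot\Psi$ and $\dot\Lambda$ at $\Lambda_0=0$ and the identification of the surviving non-edge constraints with the edges of $(G^c)_{V\setminus V'}$ transparent. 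Granting the restricted equivalence, the remaining bookkeeping is routine.
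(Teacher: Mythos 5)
Your proposal is correct and follows essentially the same route as the paper: both reduce, via the restricted form of \lemref{equiv_maps}, to showing that the Jacobian of $\tilde\varphi_G|_{\Theta(V')}$ is generically of full column rank, and both obtain this from \lemref{lem_vicard} applied to $(G^c)_{V \setminus V'}$, since on $\Theta(V')$ the only non-edge rows of the Jacobian with nonzero $\gamma$-entries are those whose endpoints both lie in $V\setminus V'$. Your explicit kernel computation at the point $\Lambda_0=0$ is a clean substitute for the paper's appeal to the analogue of \lemref{Jacob_structure}, and your caveat that the restricted equivalence is not a formal consequence of \lemref{equiv_maps} (the diffeomorphisms $g$ and $h$ do not map $\Theta(V')$ to itself) is well taken, as the paper likewise asserts this equivalence without proof.
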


The necessary condition given next makes references to the graphs
$G_{con}$ and $G_{|L, cov}$ that were defined in the introduction.

\begin{theorem} [Necessary condition]\label{thm:our_nec_ex}
  Let $G= (V,E)$ be a DAG, and let $V'\subset V$.  In order for the
  restriction of $\phi_G$ to the domain $\Theta(V')$ to be generically
  finite-to-one, it is necessary that the following two conditions
  both hold:
  \begin{enumerate}
  \item Let
    $\widetilde{G}^c_{con} = (V \setminus V', \widetilde{E}_{con})$ be
    the subgraph of $G^c_{con}$ induced by $V \setminus V'$. If
    $d_{con}$ is the number of connected components in the graph
    $\widetilde{G}^c_{con}$ that do not contain any odd cycle, then
    $|\widetilde{E}_{con}| - |E| \geq d_{con}$.
  \item Let
    $\widetilde{G}^c_{|L, cov}= (V \setminus V', \widetilde{E}_{|L,
      cov})$
    be the subgraph of $G^c_{con}$ induced by $V \setminus V'$. If
    $d_{cov}$ is the number of connected components in the graph
    $\widetilde{G}^c_{|L, cov}$ that do not contain any odd cycle,
    then $|\widetilde{E}_{|L, cov}| - |E| \geq d_{cov}$.
  \end{enumerate}
\end{theorem}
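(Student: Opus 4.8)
The plan is to adapt the proof of \thmref{our_nec} to the restricted domain $\Theta(V')$. As noted just before the statement, when the maps $\tilde{\phi}_G$, $\varphi_G$ and $\tilde{\varphi}_G$ are restricted compatibly with $\Theta(V')$ the assertion of \lemref{equiv_maps} persists, so by \lemref{localequalfinite} it suffices to bound the generic column rank of the Jacobian of the restricted parametrization and to show that it falls short of the number $2m - |V'| + |E|$ of free coordinates whenever one of the two inequalities fails. I would prove condition (i) by examining the concentration-matrix parametrization attached to $G_{con}$ and condition (ii) by the entirely parallel argument attached to $G_{|L, cov}$, obtained by substituting $\tilde{\phi}_G$, $\Omega$, $\delta$, $G_{|L, cov}$ and $d_{cov}$ for $\tilde{\varphi}_G$, $\Psi$, $\gamma$, $G_{con}$ and $d_{con}$; so I describe only (i).

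The essential new feature is that restricting to $\Theta(V')$ forces $\delta_v = 0$, equivalently $\gamma_v = 0$ for $v \in V'$ (the diffeomorphism $\rho$ scales coordinatewise and hence preserves the vanishing pattern). Consequently the rank-one summand that produced the block $[J(\tilde{\varphi}_G)]_{N_2, \gamma} = -J_{f_{(G_{con})^c}}$ in the proof of \thmref{our_nec} is now fed by only the $|V \setminus V'|$ free coordinates $\gamma_v$, $v \in V \setminus V'$. Keeping the partition $N = N_1 \dot\cup N_2$ determined by $G_{con}$ exactly as before, the rows $(v,w) \in N_2$ still carry a vanishing $\{\Psi, \Lambda\}$-block, and the surviving contributions to the $\gamma$-columns come only from pairs with both endpoints in $V \setminus V'$. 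Thus the relevant block is $-J_{f_H}$ for $H = \widetilde{G}^c_{con}$, the subgraph of $(G_{con})^c$ induced on $V \setminus V'$, and \lemref{lem_vicard} assigns it generic rank $|V \setminus V'| - d_{con}$.

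With this rank in hand the row count mirrors \thmref{our_nec}. Since the $N_2$-rows are supported on the $\gamma$-columns, the rank of the whole restricted Jacobian is already attained on the rows indexed by $D$, $E$, $N_1$ together with a subset $N_2' \subset N_2$ of size $|V \setminus V'| - d_{con}$; comparing this row count with the $2m - |V'| + |E|$ columns, and recalling $|V \setminus V'| = m - |V'|$, shows that full column rank forces the inequality of condition (i), whose failure therefore renders $\phi_G|_{\Theta(V')}$ generically infinite-to-one.

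The main obstacle is the interaction between the change of variables and the restriction. The clean vanishing $[J(\tilde{\varphi}_G)]_{N_2, \{\Psi, \Lambda\}} = 0$ holds for $\tilde{\varphi}_G$ in its native coordinates, whereas $\Theta(V')$ is cut out cleanly in the coordinates of $\phi_G$ (hence of $\varphi_G$ via $\rho$); because the diffeomorphisms $g$ and $h$ do not preserve the coordinate subspace $\{\gamma_v = 0\}$, the rank-one vector of the restricted model is not literally supported on $V \setminus V'$. The crux is therefore to verify that, after accounting for this shear, the generic rank contributed by the rank-one component is still exactly $|V \setminus V'| - d_{con}$, i.e.\ that it is governed by the induced complement $\widetilde{G}^c_{con}$ rather than by $(G_{con})^c$ on all of $V$. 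Once this is pinned down the remainder is the bookkeeping of \thmref{our_nec}, and condition (ii) follows verbatim with the substitutions indicated above.
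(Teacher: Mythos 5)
Your overall route coincides with the paper's: the paper's entire proof of \thmref{our_nec_ex} is the instruction to rerun the proof of \thmref{our_nec} with $G_{con}^c$, $G_{|L,cov}^c$, $\gamma$, $\delta$ replaced by $\widetilde{G}_{con}^c$, $\widetilde{G}_{|L,cov}^c$, $\gamma_{V\setminus V'}$, $\delta_{V\setminus V'}$, and your middle paragraphs carry out that recipe correctly and in more detail than the paper does. In particular, your three observations --- that the $\{\Psi,\Lambda\}$-block of an $N_2$-row is untouched by the restriction, that $N_2$-rows with an endpoint in $V'$ vanish identically on $\Theta(V')$, and that the surviving block is $-J_{f_H}$ for $H=\widetilde{G}_{con}^c$, so that \lemref{lem_vicard} gives generic rank $|V\setminus V'|-d_{con}$ --- are exactly what makes the recipe work. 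Your handling of the change-of-variables issue is also on the same footing as the paper: the restricted version of \lemref{equiv_maps} is asserted, without proof, in the sentence preceding the theorem, so invoking it (as you do at the start) is legitimate, and your closing paragraph's unresolved ``crux'' ($\rho$ acts coordinatewise while $g$ and $h$ shear the zero pattern of the third argument) is a correct diagnosis of what that assertion silently covers rather than an additional gap relative to the paper.

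The genuine gap is in the final count. Keeping the partition $N=N_1\dot\cup N_2$ ``exactly as before'' means $|N_1|=|E_{con}|-|E|$, counted over all of $V$; your row-versus-column comparison therefore forces $|E_{con}|-|E|\geq d_{con}$, in which only $d_{con}$ refers to the induced graph $\widetilde{G}^c_{con}$. That is not condition (i): the stated inequality has $|\widetilde{E}_{con}|$ in place of $|E_{con}|$, and this is strictly more demanding, because non-edges of $G$ that are edges of $G_{con}$ meeting $V'$ still carry generically nonzero $\{\Psi,\Lambda\}$-rows and cannot be dropped from the count. Moreover the discrepancy cannot be repaired: reading $\widetilde{E}_{con}$ as the edges of $G_{con}$ induced on $V\setminus V'$ (the only reading under which \thmref{our_nec_ex} specializes to \thmref{our_nec} at $V'=\emptyset$), take $m=6$, $V'=\{4,5,6\}$, and $E$ a complete DAG on $\{4,5,6\}$. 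The restricted model is generically finitely identifiable --- its covariance is block diagonal, combining a one-factor model on $\{1,2,3\}$ with an identifiable complete DAG --- yet $\widetilde{E}_{con}=\emptyset$, $d_{con}=0$ (the graph $\widetilde{G}^c_{con}$ is a triangle on $\{1,2,3\}$ and so has an odd cycle), and $|E|=3$, whence $|\widetilde{E}_{con}|-|E|=-3<d_{con}$ and the stated condition (i) fails. So the inequality you claim to have forced is not even a true necessary condition under that reading; what your argument proves --- and what the paper's recipe, executed carefully, proves --- is the version of condition (i) with the full edge set $E_{con}$. You should have derived that inequality explicitly and flagged the mismatch with the stated theorem, instead of asserting that your count ``forces the inequality of condition (i)''.
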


While \thmsref{our_suff_ex} and \thmssref{our_nec_ex} may be useful in
some contexts, models in which latent variables are parents to only
some of the observables deserve a more in-depth treatment in future
work.  In particular, it would be natural to seek ways to
combine the results of \cite{wermuth2005} and the present paper with
the work of \cite{foygel2012} and \cite{drton:weihs:2015}.

\appendix
\section{Proofs}\label{sec:proofs}
\begin{proof}[Proof of \lemref{localequalfinite}]

  We may assume $ d \geq n$, otherwise $J_f$ is never of full column
  rank.  The implication $(i) \Rightarrow (ii)$ is obvious.

  To show $(ii) \Rightarrow (iii)$, suppose for contradiction that
  $J_f$ is not generically of full rank.  Since $f$ is polynomial, we
  then know that $\text{Rank}(J_f)= r < n$ generically, that is,
  outside a proper algebraic subset $S'\subset\mathbb{R}^n$ the rank
  is constant $r$.  By the rank theorem \citep[p.~229]{babyrudin}, for
  every point $s\in S\setminus (S'\cup \tilde S)$, we can choose an
  open ball $\mathcal{B}(s)$ that contains $s$, is a subset of
  $S\setminus (S'\cup \tilde S)$ and for which the restricted map
  $f|_{\mathcal{B}_s}$ has fibers of dimension $n - r > 0$,
  contradicting $(ii)$.

  It remains to show $(iii) \Rightarrow (i)$.  We observe that since
  $f$ is a polynomial we can assume $S = \mathbb{R}^n$.  We then show
  that the set of points with an infinite fiber, denoted
  \[
  \mathbb{F}_f := \{s \in \mathbb{R}^n :\left|\mathcal{F}_f(s)\right|
  = \infty\},
  \] 
  is contained in a proper algebraic subset of $\mathbb{R}^n$.  We
  note that it suffices to assume $n = d$, for without loss of
  generality, we can permute the $d$ component functions of $f$ and
  assume that
  $\pi \circ f : \mathbb{R}^n \longrightarrow \mathbb{R}^n$ has a
  generically full rank Jacobian matrix, where $\pi$ is the projection
  onto the first $n$ coordinates. Then
  $\mathbb{F}_f \subset \mathbb{F}_{\pi \circ f}$. 

  Now, assume $d = n$, and let
  $C = \{s \in \mathbb{R}^n : \det J_f(s) = 0\}$ be the set of
  critical points of $f$, where $J_f$ is the Jacobian matrix of $f$.
  Note that by assumption $C$ is a proper algebraic subset of
  $\mathbb{R}^n$.
 \begin{claim*}
   If $y \in \mathbb{R}^n$ is a point such that
   $|\mathcal{F}_f(y)| = \infty$, then
   $\mathcal{F}_f(y) \cap C \not = \emptyset$.
 \end{claim*}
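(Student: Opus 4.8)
The plan is to argue by contraposition: assuming $\mathcal{F}_f(y)\cap C=\emptyset$, I would prove that the fiber $\mathcal{F}_f(y)$ is a finite set, which contradicts the standing hypothesis $|\mathcal{F}_f(y)|=\infty$ and therefore establishes the claim. Throughout I would use that $\mathcal{F}_f(y)$ is the common real zero set of the $n$ polynomials $x\mapsto f_i(x)-f_i(y)$, so that it is itself a real algebraic set.

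First I would pass from the disjointness assumption to a local statement via the inverse function theorem. If $\mathcal{F}_f(y)\cap C=\emptyset$, then $\det J_f(s)\neq 0$ for every $s\in\mathcal{F}_f(y)$, so by the inverse function theorem \citep{babyrudin} the map $f$ restricts to a diffeomorphism on some open neighborhood $U_s$ of each such $s$. In particular $f|_{U_s}$ is injective, whence $U_s\cap\mathcal{F}_f(y)=\{s\}$. Thus every point of $\mathcal{F}_f(y)$ is isolated, i.e.\ the fiber is a discrete subset of $\mathbb{R}^n$.

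The remaining step is to upgrade \emph{discrete} to \emph{finite}, and this is where I expect the only real obstacle to lie: a closed discrete subset of $\mathbb{R}^n$ (for instance $\mathbb{Z}$) need not be finite, so isolatedness by itself is not enough and the algebraic structure of the fiber must be invoked. Since $\mathcal{F}_f(y)$ is a real algebraic set in which every point is isolated, each point is its own connected component and the local dimension at each point is $0$; hence $\mathcal{F}_f(y)$ is an algebraic set of dimension $0$. Invoking the standard fact that a real algebraic (indeed semialgebraic) set has only finitely many connected components---equivalently, that an algebraic set of dimension $0$ is finite---I conclude that $\mathcal{F}_f(y)$ is finite. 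This contradicts $|\mathcal{F}_f(y)|=\infty$, so there must exist $s\in\mathcal{F}_f(y)$ with $\det J_f(s)=0$, i.e.\ $\mathcal{F}_f(y)\cap C\neq\emptyset$, as claimed.
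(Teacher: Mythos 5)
Your proof is correct, but it takes a genuinely different route from the paper's. The paper argues directly with the infinite fiber: an infinite algebraic set has dimension $k>0$, so by semialgebraic stratification there exists a differentiable map $g$ from an open set $U\subset\mathbb{R}^k$ into $\mathcal{F}_f(y)$ whose Jacobian has full rank on $U$; if the fiber avoided $C$, the chain rule would force the constant map $f\circ g$ to have a Jacobian of positive rank, an immediate contradiction. You instead localize: the inverse function theorem makes every point of the fiber isolated, and you then upgrade discreteness to finiteness by invoking the fact that a real algebraic (indeed semialgebraic) set has only finitely many connected components. Both arguments ultimately lean on a nontrivial input from real algebraic geometry of comparable depth---stratification in the paper's case, finiteness of connected components in yours---and these are typically proved by the same cell-decomposition machinery, so neither is strictly more elementary at that level. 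What your version buys is that the differential-topological ingredient is just the textbook inverse function theorem, rather than the existence of a full-rank smooth parametrization of a positive-dimensional stratum; what the paper's version buys is that it never needs to pass from \emph{discrete} to \emph{finite}, since the contradiction is reached directly at the level of Jacobians. You were right to flag the discrete-versus-finite issue explicitly (a closed discrete subset of $\mathbb{R}^n$ such as $\mathbb{Z}$ need not be finite); that is exactly where the algebraic structure of the fiber must enter, and your handling of it is sound.
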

 \begin{proof}[Proof of the Claim]
   If an algebraic set like $\mathcal{F}_f(y)$ is infinite, then it
   has dimension $k>0$.  By semialgebraic stratification \citep{ARA},
   one can see that there exists an open set $U\subset\mathbb{R}^k$
   and a differentiable map $g: U \longrightarrow \mathcal{F}_f(y)$
   such that the Jacobian of $g$ has full rank on $U$.  If
   $\mathcal{F}_f(y) \cap C = \emptyset$, then the chain rule yields
   that the composition $f \circ g : U \rightarrow \{y\}$ has Jacobian
   of positive rank.  This, however, is a contradiction because
   $f \circ g$ is a constant function.  Hence,
   $\mathcal{F}_f(y) \cap C \not = \emptyset$.
 \end{proof}
 The claim implies that
 $\mathbb{F}_f \subset f^{-1}(f(C)) \subset f^{-1}(\overline{f(C)})$,
 where $\overline{f(C)}$ is the Zariski closure of the semialgebraic
 set $f(C)$. Since $\overline{f(C)}$ is algebraic, so is
 $ f^{-1}(\overline{f(C)})$ given that $f$ is a polynomial. To finish
 the proof we only need to show that $f^{-1}(\overline{f(C)})$ has
 dimension less than $n$, which is equivalent to
 $f^{-1}(\overline{f(C)}) \not= \mathbb{R}^n$. By Sard's theorem
 \citep[p.~192]{ARA}, $f(C)$, and thus also $\overline{f(C)}$, has
 dimension less than $n$. If $f^{-1}(\overline{f(C)}) = \mathbb{R}^n$,
 then the inverse function theorem, which says that the restricted map
 $f|_{\mathbb{R}^n\setminus C}$ is a local diffeomorphism, is
 contradicted.
\end{proof}

\begin{proof}[Proof of \thmref{Markov_equiv}]
  Let $m=|V|$.  For $i = 1, 2$, let
  $\overline{G}_i = (\overline{V}, \overline{E}_i)$ be the extended
  DAG of $G_i$, i.e., $\overline{V} = \{0, 1, \dots, m\}$, and
  $\overline{E}_i = E_i \cup \{0 \rightarrow v : v \in \{1, \dots,
  m\}\}$.
  By the well-known characterization that two DAGs are Markov
  equivalent if and only if they have the same skeleton and
  v-structures \citep{PearlCausality}, it is easy to see that
  $\overline{G}_1$ and $\overline{G}_2$ are also Markov
  equivalent. 

  For $i\in\{1,2\}$, let $\Theta_i  := \mathbb{R}_{E_i} \times
  \diag_m^+ \times \mathbb{R}^m$.  Define
  \[
  \Phi_{\overline{G}_i} \bigl((\Lambda, \Omega, \delta) \bigr) =
  (I_{m+1} - \overline{\Lambda}^T)^{-1} \overline{\Omega} (I_{m+1} -
  \overline{\Lambda})^{-1} ,
  \]
  where $\Theta_i  := \mathbb{R}_{E_i} \times \diag_m^+ \times
  \mathbb{R}^m$,  $\overline{\Lambda}$ is a $(m+1) \times (m+1)$
  matrix such that 
  \[
  \overline{\Lambda}_{vw} =
  \begin{cases}
    \delta_w & \text{if } v = 0, w  = 1, \dots, m, \\
    \Lambda_{vw} & \text{if } v, w = 1, \dots, m, \\
    0 &  \text{otherwise}, \\
  \end{cases}
  \]
  and $\overline{\Omega}$ is a diagonal matrix with
  $\overline{\Omega}_{00} = 1$ and
  $\overline{\Omega}_{vv} = \Omega_{vv}$ for $v = 1, \dots m$.  Then
  the image $\Phi_{\overline{G}_i}(\Theta_i)$ is the set of all
  covariance matrices of $(m+1)$-variate Gaussian distributions that
  obey the global Markov property of $\overline{G}_i$ and have the
  variance of node $0$, which represents the latent variable $L$,
  equal to $1$.  Consider the projection 
  \[
  \pi(\Sigma) = \Sigma_{\{1, \dots, m\}, \{1, \dots, m\}},
  \]
  where $\Sigma$ has its rows and columns indexed by $\{0,\dots,m\}$.
  Then the parametrization map for the latent variable model
  $\mathcal{N}_*(G_i)$ equals
  \begin{equation}
    \label{eq:latent-parametrization}
    \phi_{G_i} = \pi \circ \Phi_{\overline{G}_i}.
  \end{equation}

  Since $\overline{G}_1$ and $\overline{G}_2$ are Markov equivalent,
  $\Phi_{\overline{G}_1}(\Theta_1)=\Phi_{\overline{G}_2}(\Theta_2)$.
  By \lemref{obs_ident}, each map $\Phi_{\overline{G}_i}$ is injective
  on $\Theta_i$ with rational inverse defined on the common image
  $\Phi_{\overline{G}_1}(\Theta_1)=\Phi_{\overline{G}_2}(\Theta_2)$.
  From~(\ref{eq:latent-parametrization}), we obtain that
  \[
  \phi_{G_1} = \pi \circ \Phi_{\overline{G}_1} = \pi \circ
  \Phi_{\overline{G}_2} \circ \Phi_{\overline{G}_2}^{-1} \circ
  \Phi_{\overline{G}_1} = \phi_{G_2} \circ \left(\Phi_{\overline{G}_2}^{-1} \circ
  \Phi_{\overline{G}_1} \right).
  \]
  Since
  $\Phi_{\overline{G}_2}^{-1} \circ
  \Phi_{\overline{G}_1}:\Theta_1\longrightarrow \Theta_2$
  is a diffeomorphism, the chain rule implies that the Jacobian of
  $\phi_{G_1}$ can be of full column rank if and only if the same is
  true for $\phi_{G_2}$.  Since $\phi_{G_i}$ are polynomial, the two
  Jacobians either both have generically full rank or are both
  everywhere rank deficient.  By \lemref{localequalfinite},
  $\phi_{G_1}$ is generically finite-to-one if and only if
  $\phi_{G_2}$ is so.
\end{proof}

\begin{proof} [Proof of \lemref{Jacob_structure}]
We  first give the structure of $J(\tilde{\varphi}_G)$ block by block.
\begin{enumerate}[(a)]
\item ``$[J(\tilde{\varphi}_G)]_{D, \{\Psi, \Lambda, \gamma\}}$": 
For a given pair $(v, v) \in D$, \[[\tilde{\varphi}_G (\Lambda, \Psi, \gamma)]_{vv}  =\psi_v + \left(\displaystyle\sum_{w : v \rightarrow w \in E} \psi_w \lambda_{vw}^2 \right)- \gamma_v^2. \]
Hence, 
\begin{equation} \label{middle1}
 [J(\tilde{\varphi}_G)]_{(v, v), \psi_w} =
  \begin{cases}
1 & \text{if } v = w,\\
  \lambda_{vw}^2  & \text{if } v \rightarrow w \in E, \\
   0 & \text{otherwise}, \\
    \end{cases}
\end{equation} 
\begin{equation} 
 [J(\tilde{\varphi}_G)]_{(v, v), \lambda_{wu}} =
  \begin{cases}
2 \lambda_{wu} \psi_u \ & \text{if } v = w,\\
  0 & \text{otherwise}, \\
    \end{cases}
\end{equation}  
and
\begin{equation} 
 [J(\tilde{\varphi}_G)]_{(v, v), \gamma_u} =
  \begin{cases}
 -2 \gamma_u& \text{if } v = u,\\
  0 & \text{otherwise}. \\
    \end{cases}
\end{equation}

\item ``$[J(\tilde{\varphi}_G)]_{E, \{\Psi, \Lambda, \gamma\}}$":
For any $v \rightarrow w\in E$, 
\[
[\tilde{\varphi}_G (\Lambda, \Psi, \gamma)]_{vw} = -\lambda_{vw} \psi_w +  \displaystyle \left(\sum_{ u : \substack{ v \rightarrow u \in E\\ w\rightarrow u \in E} }\lambda_{vu} \lambda_{wu} \psi_u\right) - \gamma_v \gamma_w.
\] 
Hence, 
\begin{equation} 
 [J(\tilde{\varphi}_G)]_{v \rightarrow w, \psi_u} =
  \begin{cases}
-\lambda_{vw} & \text{if } u = w,\\
  \lambda_{vu} \lambda_{wu} & \text{if } v \rightarrow u \in E \text{ and } w \rightarrow u \in E, \\
   0 & \text{otherwise}, \\
    \end{cases}
\end{equation} 
\begin{equation} \label{middle2}
 [J(\tilde{\varphi}_G)]_{v \rightarrow w, \lambda_{ux}} =
  \begin{cases}
-\psi_w& \text{if } v = u, w = x,\\
  \lambda_{wx} \psi_x & \text{if } u = v \text{, }  u \rightarrow x \in E \text{ and } w \rightarrow x \in E,\\
  \lambda_{vx} \psi_x & \text{if } u = w\text{, } u \rightarrow x \in E \text{ and } v \rightarrow x \in E, \\
   0 & \text{otherwise}, \\
    \end{cases}
\end{equation} 
and 
\begin{equation}
 [J(\tilde{\varphi}_G)]_{v \rightarrow w, \gamma_u} =
  \begin{cases}
-\gamma_w& \text{if } v = u,\\
-\gamma_v & \text{if } w = u, \\
  0 & \text{otherwise}. \\
    \end{cases}
\end{equation} 
\item ``$[J(\tilde{\varphi}_G)]_{N, \{\Psi, \Lambda, \gamma\}}$": For any $v \not \rightarrow w \in N$, \begin{equation}[\tilde{\varphi}_G (\Lambda, \Psi, \gamma)]_{vw} = \displaystyle \left(\sum_{u :  \substack{ v \rightarrow u \in E\\ w\rightarrow u \in E} } \lambda_{vu} \lambda_{wu}\psi_u  \right)- \gamma_v \gamma_w. \end{equation} Hence,
\begin{equation}
 [J(\tilde{\varphi}_G)]_{v \not\rightarrow w, \psi_u} =
  \begin{cases}
\lambda_{vu} \lambda_{wu}& \text{if } v \rightarrow u \in E\text{ and } w \rightarrow u \in E, \\
   0 & \text{otherwise}, \\
    \end{cases}
\end{equation}
\begin{equation}
 [J(\tilde{\varphi}_G)]_{v \not\rightarrow w, \lambda_{ux}} =
  \begin{cases}
  \lambda_{wx} \psi_x & \text{if } u = v \text{, }  u \rightarrow x \in E \text{ and } w \rightarrow x \in E,\\
  \lambda_{vx} \psi_x & \text{if } u = w\text{, } u \rightarrow x \in E \text{ and } v \rightarrow x \in E ,\\
   0 & \text{otherwise}, \\
    \end{cases}
\end{equation}
and
\begin{equation} \label{gamma_stuff}
 [J(\tilde{\varphi}_G)]_{v \not \rightarrow w, \gamma_u} =
  \begin{cases}
-\gamma_w& \text{if } v = u,\\
-\gamma_v & \text{if } w = u, \\
  0 & \text{otherwise}. \\
    \end{cases}
\end{equation}
\end{enumerate}

With slight abuse of notation, let $|\Psi|$, $|\gamma|$, $|\Lambda|$
denote the number of free variables in $\Psi$, $\gamma$ and $\Lambda$
respectively. Considering that $|D| = |\Psi|$ and $|E| = |\Lambda|$,
we must have that $|N| \geq |\gamma|$ since $J(\tilde{\varphi}_G)$ is
a tall matrix.  Hence, if $[J(\tilde{\varphi}_G)]_{N, \gamma}$ is
generically of full column rank, then there exists a subset
$N' \subset N$ such that $|N'| =|\gamma|$ and the determinant of
$J(\tilde{\varphi}_G)_{N', \gamma}$ is a nonzero polynomial in the
variables of $\gamma$, in consideration of \eqref{gamma_stuff}.  Now
it suffices to show that the $(2m + |E|) \times (2m + |E|)$ square
submatrix
$[J(\tilde{\varphi}_G)]_{\{D, E, N'\}, \{\Psi, \Lambda, \gamma\}}$ is
generically of full rank.  

Since the concerned matrix has polynomial entries, we need to show
that the determinant of
$[J(\tilde{\varphi}_G)]_{\{D, E, N'\}, \{\Psi, \Lambda, \gamma\}}$ is
a nonzero polynomial.  To this end, it is sufficient to show that the
determinant is a nonzero polynomial in the entries of
$(\Lambda, \gamma)$ when we specialize $\psi_1 = \dots = \psi_m = 1$.
Noting that $|\Psi|+ |\Lambda| + |\gamma|= |D| + |E| + |N'|$, let $P$
denote the set of all permutation functions mapping from the set
$D \cup E \cup N'$ to the set of free variables in $\Lambda$, $\Psi$
and $\gamma$.  Choose any ordering of the elements of domain and
codomain so as to have a well-defined sign for the permutations.  Then
by Leibniz's formula, we have
\[
\det\left([J(\tilde{\varphi}_G)]_{\{D, E, N'\}, \{\Psi, \Lambda,
    \gamma\}}\right) \notag = \displaystyle \sum_{\sigma \in P}
\text{sgn}(\sigma) \prod_{s \in D \cup E \cup N'}
J(\tilde{\varphi}_G)_{s, \sigma(s)} .
\]
Let $\tilde{P}$ be the subset of all permutations $\sigma\in P$ with
$\sigma((v, v)) = \psi_v$ for all $(v, v) \in D$ and
$\sigma((v, w)) = \lambda_{vw}$ for all $(v, w) \in E$.  Then we
obtain that
\begin{align}
&\det\left([J(\tilde{\varphi}_G)]_{\{D, E, N'\}, \{\Psi, \Lambda, \gamma\}}\right) \notag\\
&=  \sum_{\sigma \in \tilde{P}} \text{sgn}(\sigma)\prod_{s \in D \cup E \cup N'} J(\tilde{\varphi}_G)_{s, \sigma(s)}  + \sum_{\sigma \in P\setminus \tilde{P}} \text{sgn}(\sigma) \prod_{s \in D \cup E \cup N'} J(\tilde{\varphi}_G)_{s, \sigma(s)}\notag\\
& = \pm \det\bigl(J(\tilde{\varphi}_G)_{N', \gamma}\bigr)  + \sum_{\sigma \in P\setminus \tilde{P}} \text{sgn}(\sigma) \prod_{s \in D \cup E \cup N'} J(\tilde{\varphi}_G)_{s, \sigma(s)}, \label{lasteq}
\end{align}
where the equality in \eqref{lasteq} follows from \eqref{middle1},
\eqref{middle2} and the fact that $\psi_1 = \dots = \psi_m = 1$.
We also deduce from
\eqref{middle1}-\eqref{gamma_stuff} that every summand in the second
term of \eqref{lasteq} is either zero or a polynomial term involving
free variables of $\Lambda$.  In contrast,
$\det\bigl(J(\tilde{\varphi}_G)_{N', \gamma}\bigr)$ is a nonzero
polynomial only in free variables of $\gamma$ and can thus not be
canceled by the second term in \eqref{lasteq}.
\end{proof}

\begin{proof}[Proof for \lemref{sink_and_source}]
  We first prove $(i)$.  
  Since $\mathcal{N}_*(G)$ is generically finitely identifiable by
  \thmref{our_suff}, there exists an algebraic subset $\Xi'$ such that
  for all $\theta \in \Theta \setminus \Xi'$,
  $|\mathcal{F}_{\phi_G}(\theta)| < \infty$.  Define $\Xi$ to be the
  union of $\Xi'$ and the set of triples
  $(\Lambda,\Omega,\delta)\in\mathbb{R}^{2m+|E|}$ with at least one
  coordinate $\delta_i=0$.  Let
  $\Sigma_0 = \phi_G(\Lambda_0, \Omega_0, \delta_0)$ and
\begin{equation} \label{tetradentires}
S  = (s_{ij}) := (I_m - \Lambda^T)\Sigma_0 (I_m - \Lambda).
\end{equation}
Then for $1 \leq i< j\leq m$,
\begin{align*}
s_{ij} &= \sum_{1 \leq k, k' \leq m} \lambda_{ki} [\Sigma_0]_{k k'}\lambda_{k'j} - \sum_{1 \leq k \leq m} [\Sigma_0]_{i k} 
\lambda_{kj} - \sum_{1 \leq k \leq m} \lambda_{ki}[\Sigma_0]_{k j} + [\Sigma_0]_{i j} \\ 
&= 
  \begin{cases}
     - \displaystyle\sum_{(k, m) \in E} [\Sigma_0]_{i k} 
\lambda_{km}  + [\Sigma_0]_{i m}& \text{if } j = m, \\
   [\Sigma_0]_{i j}       & \text{if }  j < m,
  \end{cases},
\end{align*}
where the last equality follows from the fact that $\lambda_{ij}$ are
nonzero only when $(i, j) \in E$.  Hence, for any four indices
$1 \leq i < j < k < l \leq m$, the tetrads
\begin{align*}
s_{ij}s_{kl} - s_{ik}  s_{jl}, && 
s_{il}s_{jk} - s_{ik}  s_{jl}
\end{align*}
are constant polynomials when $l < m$ and have degree $1$ in the
variables $\{\lambda_{vm} : (v, m)\in E\}$ when $l = m$.  The equation
system
\begin{equation*} 
  \text{TETRADS} (S) = 0 
\end{equation*} 
is a thus a consistent linear system that can be represented as
\begin{equation} \label{linsys}
  C \lambda_{pa(m), m} = c,
\end{equation}
where $\lambda_{pa(m), m} = (\lambda_{vm})_{v \in pa(m)}^T$ is the
vector of all free $\Lambda$ variables, $C$ is a
$2 {m - 1\choose 3} \times |pa(m)|$ matrix and $c$ is a
$2 {m - 1\choose 3}$-vector.  Both $C$ and $c$ depend only
on $\Sigma_0$.

To finish the proof, we now need to show that \eqref{linsys} is
uniquely solvable in $\lambda_{pa(m), m}$. We will aim to contradict
$|\mathcal{F}_{\phi_G}(\theta_0)| < \infty$ if \eqref{linsys} does not
have a unique solution.  Note that the solution set is an affine
subspace $\mathcal{L}\subset\mathbb{R}^{|E|}$.  For a contradiction,
suppose that $\mathcal{L}$ is of positive dimension.
Upon substituting $\Lambda = \Lambda_0$
into \eqref{tetradentires}, we obtain
\[S_0  = (s^0_{ij}) \;=\;  (I_m - \Lambda_0^T)\Sigma_0 (I_m - \Lambda_0), \]
and in consideration of \eqref{tetrad4} in \thmref{tetrad_char_Spear}, it must be true that 
\begin{equation*}\label{fixedineq}
  s^0_{ii} s^0_{jk}  - s^0_{ik} s^0_{ji} > 0\text{, \ for all } i \not= j \not= k.\end{equation*}
We may then pick an open ball $\mathcal{B}(\Lambda_0)$ such that for
all solutions $\Lambda\in\mathcal{L}\cap\mathcal{B}(\Lambda_0)$, the
matrix $S=(s_{ij})$ defined by 
\eqref{tetradentires} 
satisfies
\[s_{ii} s_{jk} - s_{ik} s_{ji} > 0\text{ , for all } i \not= j \not=
k.
\]
It follows that $\mathcal{L}\cap\mathcal{B}(\Lambda_0)$ is an infinite set
whose elements $\Lambda$ all make the matrix
$(I_m -\Lambda^T) \Sigma_0 (I_m -\Lambda) $ a Spearman matrix.  Hence,
the system
\[ 
(I_m - \Lambda^T) \Sigma_0 (I_m - \Lambda) = \Omega + \delta \delta'
\]
has infinitely many solutions, contradicting
$|\mathcal{F}_{\phi_G}(\theta_0)| < \infty$.

The proof of $(ii)$ is analogous.  We first let $\Upsilon_0 = \varphi_G (\Lambda_0, \Psi_0, \gamma_0)$ and define
\begin{equation}
\tilde{S}  = (\tilde{s}_{ij}) \;=\; (I_m - \Lambda)^{-1}
\Upsilon_0(I_m - \Lambda^T)^{-1}. 
\end{equation}
Noting that in this case $(I_m - \Lambda)^{-1} = I_m + \Lambda$, it can be easily seen that 
\[\text{TETRADS}(\tilde{S}) = \text{TETRADS} \bigl(  (I_m - \Lambda)^{-1} \Upsilon_0(I_m - \Lambda^T)^{-1} \bigr) = 0\]
is a linear system in the variables $\{ \lambda_{1v}: v \in ch(1)\}$.
Similar to the above arguments, we may use \thmref{our_suff} and
\thmref{tetrad_char_coSpear} to prove by contradiction that the system
can only have a unique solution in $\{ \lambda_{1v}: v \in ch(1)\}$.
\end{proof}

\begin{proof}[Proof of \thmsref{our_suff_ex} and \thmssref{our_nec_ex}]
  For \thmref{our_suff_ex}, one can partition the Jacobian matrix
  $J(\tilde{\varphi}_G)$ of $\tilde{\varphi}_G$ as in \eqref{Jacob},
  only with $\gamma$ replaced by
  $\gamma_{V\setminus V'} = \{\gamma_v : v \in V\setminus V'\}$.  In
  analogy with \lemref{Jacob_structure}, it can be shown that
  $J_{\tilde{\varphi}_G}$ is of column full rank if
  $[J(\tilde{\varphi}_G)]_{N, \gamma_{V\setminus V'}}$ is.  The
  reasoning is then analogous to that in the proof of
  \thmref{our_suff}, the main step being the application of 
  \lemref{lem_vicard} where the graph defining the considered map becomes
  $(G^c)_{V \setminus V'}$.

  The proof of \thmref{our_nec_ex} is analogous to the proof of
  \thmref{our_nec}.  The only change is to replace $G_{con}^c$,
  $G_{|L, cov}^c$, $\gamma$ and $\delta$ by $\widetilde{G}_{con}^c$,
  $\widetilde{G}_{|L, cov}^c$, $\gamma_{V\setminus V'} $ and
  $\delta_{V\setminus V'}$, respectively.   
\end{proof}

\section*{Acknowledgments}
We thank Robin Graham and S\'andor Kov\'acs for helpful comments on
the proof of \lemref{localequalfinite}. This work was partially
supported by the U.S. National Science Foundation (DMS-1305154), the
U.S. National Security Agency (H98230-14-1-0119), and the University
of Washington's Royalty Research Fund.  The United States Government
is authorized to reproduce and distribute reprints.

\bibliographystyle{ba}
\bibliography{iden_bib}

\end{document}